\theoremstyle{plain}
\newtheorem{theorem}{Theorem}
\newtheorem{lemma}[theorem]{Lemma}
\newtheorem{proposition}[theorem]{Proposition}
\newtheorem{corollary}[theorem]{Corollary}
\theoremstyle{definition}
\newtheorem{definition}[theorem]{Definition}
\newtheorem{remark}[theorem]{Remark}
\newtheorem{example}[theorem]{Example}
\newcommand{\beq}{\begin{equation}}
\newcommand{\eeq}{\end{equation}}
\newcommand{\beqv}{\begin{equation*}}
\newcommand{\eeqv}{\end{equation*}}
\newcommand{\mysubsection}[1]{%
% \vspace{.5\baselineskip}\par%
 {\rmfamily\bfseries\upshape #1.}%
 \addcontentsline{toc}{section}{\vspace{-1.5\baselineskip}\\ $\qquad\;$\emph{#1}}
 \nopagebreak
 \par
}
\newcommand{\Z}{\mathbb{Z}}
\newcommand{\R}{\mathbb{R}}
\newcommand{\F}{\mathbb{F}}
\newcommand{\PP}{{\mathbb{P}}}
\newcommand{\cI}{{\mathcal{I}}}
\newcommand{\cM}{{\mathcal{M}}}
\newcommand{\ie}{\emph{i.e. }}
\newcommand{\eg}{\emph{e.g. }}
\newcommand{\longto}{\longrightarrow}
\newcommand{\inj}{\hookrightarrow}
\newcommand{\tens}{\otimes}
\newcommand{\moins}{\setminus}
\newcommand{\meet}{\wedge}
\newcommand{\join}{\vee}
\newcommand{\equivaut}{\Longleftrightarrow}
\newcommand{\abs}[1]{\lvert #1\rvert}
\newcommand{\norme}[1]{\lVert #1\rVert}
\newcommand{\linspan}[1]{\langle #1\rangle}
\newcommand{\mumax}{\mu_{\max}}
\newcommand{\mumin}{\mu_{\min}}
\DeclareMathOperator{\rk}{rk}
\DeclareMathOperator{\sat}{sat}
\DeclareMathOperator{\cl}{cl}
\DeclareMathOperator{\covol}{covol}
\DeclareMathOperator{\Supp}{Supp}
\DeclareMathOperator{\Cosupp}{Cosupp}
\DeclareMathOperator{\dmin}{d_{min}}
\DeclareMathOperator{\ratef}{R}
\DeclareMathOperator{\Proj}{Proj}
\DeclareMathOperator{\Div}{Div}
\DeclareMathOperator{\Fil}{Fil}
\DeclareMathOperator{\gr}{gr}
\let\phi\varphi
\let\epsilon\varepsilon
\let\subset\subseteq
\let\supset\supseteq
\let\subsetneq\varsubsetneq
\let\supsetneq\varsupsetneq
\begin{document}

\title{Harder-Narasimhan theory for linear codes\\ {\normalsize \vspace{-.2\baselineskip}(with an appendix on Riemann-Roch theory)}}

\author{Hugues Randriambololona\\
LTCI / ENST ``Telecom ParisTech''}

%%%% \author{Hugues Randriambololona}
%%%% \address{\'Ecole nationale sup\'erieure des t\'el\'ecommunications
%%%% (``Telecom ParisTech'')\\
%%%% Laboratoire traitement et communication de l'information (LTCI)\\
%%%% 46 rue Barrault, 75634 Paris cedex 13, France}

\maketitle

\begin{abstract}
In this text we develop some aspects of Harder-Narasimhan theory, slopes, semistability and canonical filtration, in the setting of combinatorial lattices.
Of noticeable importance is the Harder-Narasimhan structure associated to a Galois connection between two lattices.
It applies, in particular, to matroids.

We then specialize this to linear codes.
This could be done from at least three different approaches: using the sphere-packing analogy, or the geometric view, or the Galois connection construction just introduced.
A remarkable fact is that these all lead to the same notion of semistability and canonical filtration.
Relations to previous propositions toward a classification of codes, and to Wei's generalized Hamming weight hierarchy, are also discussed.

Last, we study the two important questions of the preservation of semistability (or more generally the behaviour of slopes) under duality, and under tensor product.
The former essentially follows from Wei's duality theorem for higher weights---and its matroid version---which we revisit in an appendix,
developing analogues of the Riemann-Roch, Serre duality, Clifford, and gap and gonality sequence theorems.
Likewise the latter is closely related to the bound on higher weights of a tensor product, conjectured by Wei and Yang, and proved by Schaathun in the geometric language, which we reformulate directly in terms of codes. From this material we then derive semistability of tensor product.

%In an appendix that can be read independently, of a more expository nature,
%we revisit elementary coding constructions (shortening, puncturing, and the duality between them) as part of some analogue of Riemann-Roch theory.
%This allows to present Riemann-Roch, Serre duality, Clifford, and gap and gonality sequence theorems for linear codes, and more generally for matroids.
%As an illustration, a proof of Wei's duality theorem is reinterpreted in this framework.
\end{abstract}

%%%% \maketitle

%\setlength\cftparskip{-2pt}

%{
%\setlength\cftbeforesecskip{1pt}
%\renewcommand{\cftsecfont}{\normalfont}
\tableofcontents
%}

%\setcounter{section}{-1}
%\section{Introduction}
\section*{Introduction}
\addcontentsline{toc}{section}{Introduction}

A powerful guiding principle in arithmetic geometry is the analogy between number fields and function fields.
This analogy already manifests itself when one considers ``linear algebra'' over these fields:
it then translates into common properties shared by euclidean or hermitian
lattices over rings of integers of number fields, and vector bundles over curves.
The main motivation of the present work is to emphasize how some of these similarities extend to linear codes.

This will certainly be no surprise for anyone familiar with the subject.
Indeed, many connections are known between codes and lattices on one side, and between codes and curves on the other.
However, there might be a deeper explanation for this phenomenon.

Shortly %(or provocatively?)
said, a linear code is just a subspace of a finite dimensional $L^1$ space over a trivially valued field.
Following the philosophy of Arakelov theory, putting a metric on a linear object can be seen as a replacement for an integral structure.
From this point of view, one could then argue that coding theory is nothing but linear algebra over a certain combinatorial base object of dimension one, hence of arithmetic nature.

\vspace{.5\baselineskip}

The corpus of results that we would like to borrow from number theory
and algebraic geometry into coding theory consists of two parts: first of all,
Harder-Narasimhan theory, and to a lesser extent, Riemann-Roch theory.

\vspace{.5\baselineskip}

Harder and Narasimhan \cite{HN1975} first introduced slopes and the canonical filtration as a tool in the computation of the number of points (and more generally the \'etale cohomology) of certain moduli spaces over finite fields.
The analogue for euclidean and hermitian lattices was then developed by Stuhler~\cite{Stuhler1976} and Grayson~\cite{Grayson1984}.
The theory subsequently saw multiple extensions, and propositions toward a more unified understanding were made,
especially in a categorical framework \cite{Andre2009}\cite{Chen2010}.

Our aim is more modest. We will completely ignore functoriality issues, and focus on the more elementary combinatorial aspects.
For this we first develop the theory in the context of combinatorial lattices\footnote{In this work we will encounter two distinct notions of ``lattices'' which, quite unfortunately, English terminology does not distinguish.
The first one, \emph{r\'eseau} in French, means a discrete subgroup in a continuous space; for us it will always be endowed with a euclidean or hermitian structure.
The second, \emph{treillis} in French, is a poset with meet and join; again we will often add an adjective, such as ``combinatorial'',
in order to help make the distinction.

(Actually, French terminology cannot claim superiority there, since in turn, \emph{r\'eseau} is used for two distinct notions, one that translates in English as lattice, the other as network; and \emph{treillis} also is used for two distinct notions, one that translates as lattice, the other as trellis!)}:
what makes Harder-Narasimhan theory work is the existence of certain features, such as the so-called second isomorphism theorem, and also rank and degree functions satisfying Grayson's parallelogram constraint.
It turns out these are well captured by a classical notion in combinatorial lattice theory, that of (semi)modularity.
This allows us to present Harder-Narasimhan theory under minimalist hypotheses.\footnote{Added in the revised version:
this approach is so natural that, almost simultaneously with this work, it was also introduced in~\cite{Cornut}---but then the focus shifts to other directions.}
As an illustration, we then explain how a Galois connection between two lattices canonically produces a Harder-Narasimhan structure on them.

Specializing to codes, the most natural approach is perhaps to take inspiration from the analogy with euclidean lattices, where the degree function is $-\log$ of the covolume of a sublattice.
We claim that the right analogue of the covolume is the support size of a subcode.
This allows us to define slopes, the canonical filtration of a code, and a notion of semistability,
using the degree function
\beqv
\deg(C')=n-w(C')
\eeqv
for a subcode $C'$ of a $[n,k]$-code $C$.
However other approaches are possible, for instance, via algebraic geometry using the equivalence with configurations of points in a projective space,
or via our combinatorial lattice construction using a certain natural Galois connection.
A very satisfactory result is that all these approaches lead to the same theory.

Our definition of the degree function from the support size implies a close link with Wei's generalized weight hierarchy $d_i(C)$ from \cite{Wei1991}:
the canonical polygon of $C$ is the upper convex envelope of the set of points
\beqv
\{(i,n-d_i(C)):\;0\leq i\leq k\}.
\eeqv
If this polygon has $N$ sides, the code admits a canonically defined $N$-step filtration
\beqv
0=C_0\subsetneq C_1\subsetneq\cdots\subsetneq C_N=C.
\eeqv
However, for codes used in practical applications, this will essentially always be trivial ($N=1$): these codes are stable, and our construction becomes void.
Thus it is to be expected that true coding specialists will find the present work pointless.
Probably, meaningful uses of the theory would appear only when considering questions involving the class of \emph{all} codes, good and bad.

In most Harder-Narasimhan categories, semistability is preserved under duality.
We show it is true for codes, and more generally, we explain how the slopes and canonical filtration of $C$ and $C^\perp$ are related.
%The same also works for matroids.

Less clear is the preservation of semistability under tensor product. 
It is known to be true for vector bundles over curves in characteristic zero,
but false in positive characteristic, where counter-examples have been constructed \cite{Gieseker1973}.
For euclidean and hermitian lattices the question has been popularized by Bost and is still open,
despite recent progress \cite{BC2013}\cite{GR2013} that allow to settle low dimensional cases, or lattices admitting a large group of automorphisms.
We show it holds for codes, where it is closely related to Schaathun's lower bound on higher weights of a tensor product.
Schaathun's proof~\cite{Schaathun2000} is written in the geometric language, in terms of projective systems of points.
We reformulate (and somehow simplify) the proof, directly in terms of codes, and then derive semistability of tensor product, 
with the hope that it could provide inspiration for new advances in the euclidean lattice situation.

\vspace{.5\baselineskip}

The Riemann-Roch and Serre duality theorems certainly form the core of the theory of algebraic curves, on which almost all other results are based.
In particular the Riemann-Roch theorem is closely related to the functional equation of the zeta function of a curve \cite{Schmidt1931}\cite[\S5.1]{Stichtenoth}.
Analogues for euclidean and hermitian lattices over integers of number fields have been developed at various levels of sophistication,
based on the functional equation of the theta function~\cite{Tate1950},
or using the language of Arakelov theory~\cite{Szpiro1985}\cite{GS1991}\cite{Soule1997},
or a combination of both~\cite{Roessler1993}\cite{vdGS2000}\cite{Groenewegen2001}\cite{Borisov2003}\cite{Bost2015} (references in chronological order).

Linear codes also admit zeta functions with a functional equation~\cite{Duursma1999},
and we can see the MacWilliams relation for the weight enumerator as an analogue of the functional equation of the theta function.
This suggests that some sort of Riemann-Roch theory should exist also for codes.
And indeed, in an appendix that can be read independently,
we propose a definition of $H^0$ and $H^1$ spaces for codes that satisfy analogues of Serre duality and Riemann-Roch (see Theorems~\ref{Serre} and~\ref{RR}), and seem to fit well in this framework.
Maybe these notions are not entirely new: previous occurrences can be hinted, for instance, in~\cite{Duursma2003}.
However our version is more precise and makes the analogy very explicit.

This Riemann-Roch theory can serve as a nice way to present the behaviour of slopes and canonical filtration under duality.
It can also be used to reprove Wei's duality theorem for higher weights~\cite{Wei1991}.

\vspace{.5\baselineskip}

Closely related to linear codes is the notion of matroid.
Whenever possible, we tried to explain how our theory extends in this framework.
Matroids also admit slopes and a canonical filtration, hence a notion of semistability.
They satisfy a form of Riemann-Roch theory, from which duality results for their slopes and canonical filtration can be derived, extending those for codes.
We do not try to extend the notion of tensor product to matroids, although this certainly is an interesting question.

Satisfactorily, matroid theory recently gained interest among algebraic and arithmetic geometers (we refer to \cite{Katz} for an introduction to this topic),
especially thanks to its interplay with non-archimedean and tropical geometry.
Relevant in this context is the Riemann-Roch theorem for graphs of \cite{BN2007}, and its many further generalizations.
To any graph there is an associated matroid, however, it is unclear whether our Riemann-Roch theorem for matroids and this Riemann-Roch theorem for graphs could be related.
Last, a very elaborate cohomology theory for matroids was recently introduced in \cite{AHK2015}.
Again, to which extent our work could possibly relate to it remains unclear.

\vspace{.5\baselineskip}

We will borrow some notations commonly used in combinatorial theory. We let $[n]=\{1,\dots,n\}$ be the standard set with $n$ elements.
Given a set $S$, we identify $S^n$ with $S^{[n]}$. We also let $2^S$ stand for the set of all subsets of $S$, and $\binom{S}{n}$ for the set of its subsets of cardinality $n$.

Most of the content of this text was known to the author since at least half a dozen years.
Impetus for writing it down and making it publicly available was provided by ANR-14-CE25-0015 project \textsc{Gardio} and ANR-15-CE39-0013 project \textsc{Manta}.

%%%%%%%%%%%%%%%%%%%%%%%%%%%%%%%%%%%%%%%%%%%%%%%%%%%%%%%%%%%%%%%%%%%%%
%%%%%%%% Harder-Narasimhan theory for combinatorial lattices %%%%%%%%
%%%%%%%%%%%%%%%%%%%%%%%%%%%%%%%%%%%%%%%%%%%%%%%%%%%%%%%%%%%%%%%%%%%%%

\section{Harder-Narasimhan theory for combinatorial lattices}

\mysubsection{Basic definitions}

We recall that a combinatorial lattice $(L,\subset,\meet,\join)$ is a poset in which any two elements admit a meet and a join.
Here the symbol $\subset$ stands for an abstract partial order relation, although in many examples it will be set inclusion, hence our notation.
We will use the book \cite{Birkhoff} as our main reference on this topic.

A lattice $L$ is said to be \emph{of finite length} if there is a finite upper bound on the length of chains in $L$. Unless otherwise specified, this will always be the case in this work.
Then $L$ admits a minimal element, say $0_L$, and also a maximal element, say $1_L$.
We define the rank $\rk(x)$ of an element $x\in L$ as the maximal length of a chain from $0_L$ to $x$.
Note that our terminology here departs slightly from \cite{Birkhoff} and the standard literature, where our notion of rank is more commonly called height.
%Note that our terminology here departs slightly from \cite{Birkhoff}; there, our notion of rank is called height.
%Conversely, given $x\in L$, we will also denote by $x$ the ``sublattice'' of $L$ made of all $z$ with $z\subset x$.
%More generally, given two elements $x,y\in L$ with $y\subset x$, we will denote by $x/y$ the ``subquotientlattice'' of $L$ made of all $z$ with $y\subset z\subset x$ (hence, with the notation just introduced, we have $x=x/0$).

Given $x,y\in L$ with $y\subset x$, we will let $x/y$ stand for the sublattice of $L$ made of all $z$ with $y\subset z\subset x$.
Sometimes, by abuse of notation, it will also be tempting to let $x$ stand for $x/0_L$; we might use this licence occasionally, although we will refrain from doing so too often (observe it makes the notations $y\subset x$ and $y\in x$ interchangeable). Likewise, we might occasionally write $L/y$ for $1_L/y$.

%A lattice $L$ is said to be \emph{of finite length} if there is a finite upper bound on the length of chains in $L$. This will always be the case in this work.
%Then $L$ admits a minimal element and a maximal element, so the discussion above applies.
%We define the rank $\rk(x)$ of an element $x\in L$ as the maximal length of a chain from $0$ to $x$.
%Note that our terminology here departs slightly from \cite{Birkhoff}; there, our notion of rank is called height.

In this work we will only consider \emph{modular} lattices \cite[\S I.7]{Birkhoff}.
There are various equivalent characterizations of this important notion.
We will use the one from \cite[\S II.8, Th.~16]{Birkhoff}.
First we recall that a function $f:L\longto\R$ is said \emph{lower semimodular} if for any $x,y\in L$ we have
\beq
\label{semimodularity}
f(x)+f(y)\leq f(x\join y)+f(x\meet y).
\eeq
Dually $f$ is upper semimodular if $-f$ is lower semimodular.
And $f$ is \emph{modular} if it is both lower and upper semimodular, that is, 
if for any $x,y\in L$ we have
\beq
\label{modularity}
f(x)+f(y)=f(x\join y)+f(x\meet y).
\eeq
Then, a lattice $L$ of finite length is \emph{modular} if and only if it satisfies the following two conditions:
\begin{itemize}
\item the rank function $\rk:L\longto\Z$ is modular, and moreover
\item (Jordan-Dedekind) all maximal chains between two $y\subset x$ in $L$ have the same length, necessarily equal to $\rk(x)-\rk(y)$.
%\item the rank function $\rk:L\longto\Z$ satisfies the \emph{modular equality}: for any $x,y\in L$ we have
%\beq
%\label{modularity}
%\rk(x)+\rk(y)=\rk(x\join y)+\rk(x\meet y).
%\eeq
\end{itemize}
It is known that for any $x,y$ in a modular lattice, we have a natural identification $(x\join y)/y\simeq x/(x\meet y)$ \cite[\S I.7, Th.~13]{Birkhoff}.

\begin{definition}
\label{def_HN_lattice}
A Harder-Narasimhan lattice is a modular lattice $L$ of finite length
%equipped with a degree function $\deg:L\longto\R$ that is bounded from above and satisfies the \emph{lower semimodular inequality}:
%for any $x,y\in L$ we have
%\beq
%\label{semimodularity}
%\deg(x)+\deg(y)\leq\deg(x\join y)+\deg(x\meet y).
%\eeq
%equipped with a degree function $\deg:L\longto\R$ that is lower semimodular and bounded from above.
%(and such that... Northcott?) !!!
equipped with a lower semimodular function $\deg:L\longto\R$.
\end{definition}

\begin{lemma}
\label{pullback}
Suppose $(L,\deg)$ is a Harder-Narasimhan lattice, and let $L'$ be another modular lattice of finite length together
with a morphism $L'\longto L$ (meaning that it respects meet and join).
Then pulling back the degree function to $L'$ makes it into a Harder-Narasimhan lattice.

In particular, for any elements $y\subset x$ in $L$, the sublattice $x/y$ is a Harder-Narasimhan lattice.
\end{lemma}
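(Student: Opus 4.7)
The plan is to unpack the definition of a Harder-Narasimhan lattice and verify each requirement for $L'$ equipped with the pulled-back degree function. Write $\phi:L'\to L$ for the given morphism and set $\deg':=\deg\circ\phi$. The modularity and finite length of $L'$ are hypotheses, so what remains is to check that $\deg'$ is lower semimodular and bounded from above.

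For lower semimodularity, I would simply chain the equalities coming from the fact that $\phi$ respects meets and joins with the inequality~\eqref{semimodularity} applied to $\deg$ at the points $\phi(x'),\phi(y')\in L$. Concretely, for any $x',y'\in L'$,
\beqv
\deg'(x')+\deg'(y')=\deg(\phi(x'))+\deg(\phi(y'))\leq\deg(\phi(x')\join\phi(y'))+\deg(\phi(x')\meet\phi(y'))=\deg'(x'\join y')+\deg'(x'\meet y').
\eeqv
Boundedness from above is even more immediate: $\deg'(L')\subset\deg(L)$, and the latter is bounded above by hypothesis.

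For the ``in particular'' clause, I would verify that for $y\subset x$ in $L$ the interval $x/y$ is itself a modular lattice of finite length (both properties inherited from $L$: closedness under meet/join within the interval is automatic since $y\subset z_1,z_2\subset x$ forces $y\subset z_1\meet z_2$ and $z_1\join z_2\subset x$), and that the inclusion $x/y\inj L$ is a morphism in the above sense. Applying the first part of the lemma to this inclusion finishes the argument.

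There is essentially no obstacle here; the statement is really just a soundness check on Definition~\ref{def_HN_lattice}, confirming that the axioms behave well under pullback. The only thing worth double-checking is the subtle point that ``morphism'' is defined to mean preservation of meet and join (not necessarily of rank or of the order relation alone), which is exactly the structure used in the semimodularity computation above. The inclusion of an interval trivially has this property, so the specialization is legitimate.
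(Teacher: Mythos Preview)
Your proof is correct and is exactly the routine verification the paper has in mind; the paper's own proof is simply the word ``Clear.'' Your write-up just makes explicit the two checks (lower semimodularity via $\phi$ preserving meet and join, and boundedness via $\deg'(L')\subset\deg(L)$) together with the observation that an interval inclusion is a lattice morphism.
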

\begin{proof}
Clear.
\end{proof}

%If $(L,\deg)$ is a Harder-Narasimhan lattice, then for any elements $y\subset x$ in $L$, restricting the degree function to the sublattice $x/y$ makes it into a Harder-Narasimhan lattice too.

%We will say $(L,\deg)$ is normalized if $\deg(0_L)=0$. We will say it is co-normalized if $\deg(1_L)=0$.
%Translating the degree function by an additive constant will often be unessential for us.
%Indeed this will not alter the slopes nor the canonical filtration defined below.

To any $x\in L$ we associate a point $p_{x}$ in the real plane, with coordinates $p_{x}=(\rk(x),\deg(x))$.
Graphically, the modularity and semimodularity relations %\eqref{modularity}\eqref{semimodularity}
\beq
\label{parallelogram}
\begin{aligned}
\rk(x)+\rk(y)&=\rk(x\join y)+\rk(x\meet y)\\
\deg(x)+\deg(y)&\leq\deg(x\join y)+\deg(x\meet y)
\end{aligned}
\eeq
translate into Grayson's parallelogram constraint \cite[Fig.~1.14]{Grayson1984}:
any three points among $p_{x},p_{y},p_{x\meet y},p_{x\join y}$ determine a parallelogram,
and then the fourth point lies vertically below (if it is $p_{x}$ or $p_{y}$) or above (if it is $p_{x\meet y}$ or $p_{x\join y}$) the fourth vertex of this parallelogram.\footnote{Actually, in Grayson's original paper the constraint goes in the opposite direction.
This is because his sign convention for the degree function makes it upper semimodular instead of lower semimodular.}
\begin{center}
\begin{tikzpicture}[scale=.25]
\coordinate[label=below:$p_{x\meet y}$] (W1) at (3cm,1cm);
\coordinate[label=above:$p_{y}$] (N1) at (9cm,4cm);
\coordinate (E1) at (21cm,-1cm);
\coordinate[label=below:$p_{x}$] (S1) at (15cm,-4cm);
\coordinate[label=above:$p_{x\join y}$] (Z1) at (21cm,2cm);
\foreach \point in {W1,N1,E1,S1,Z1} \fill (\point) circle (2.5mm);
\draw (W1) -- (N1) -- (E1) -- (S1) -- (W1); 
\draw[->] ($(E1)+(0mm,5mm)$) -- ($(Z1)+(0mm,-5mm)$);
\draw[->] (-3cm,-2cm) -- (25cm,-2cm);
\coordinate[label=above:$\rk$] (axex) at (25cm,-2cm);
\draw[->] (-1cm,-4cm) -- (-1cm,4cm);
\coordinate[label=right:$\deg$] (axey) at (-1cm,4cm);
%\coordinate[label=below:$p_{x\meet y}$]  (W2) at (-21cm,1cm);
%\coordinate (N2) at (-15cm,4cm);
%\coordinate[label=above:$p_{x\join y}$] (E2) at (-3cm,-1cm);
%\coordinate[label=below:$p_{y}$] (S2) at (-9cm,-4cm);
%\coordinate[label=below:$p_{x}$] (Z2) at (-15cm,2cm);
%\foreach \point in {W2,N2,E2,S2,Z2} \fill (\point) circle (2.5mm);
%\draw (W2) -- (N2) -- (E2) -- (S2) -- (W2); 
%\draw[->] ($(N2)+(0mm,-5mm)$) -- ($(Z2)+(0mm,5mm)$);
\end{tikzpicture}
\end{center}

If $L$ is nontrivial we define its slope as $\mu(L)=(\deg(1_L)-\deg(0_L))/\rk(L)$ where by abuse of notation we set $\rk(L)=\rk(1_L)$. 
Accordingly, given $y\subsetneq x$ in $L$, the slope from $y$ to $x$ is
\beq
\mu(x/y)=\frac{\deg(x)-\deg(y)}{\rk(x)-\rk(y)}
\eeq
that is, the slope of the line segment joining $p_{y}$ to $p_{x}$.
We also define the slope of $x\neq0_L$ to be $\mu(x/0_L)$ and the co-slope of $x\neq1_L$ to be $\mu(1_L/x)$.

\begin{lemma}
Let $(L,\deg)$ be a Harder-Narasimhan lattice. Then the degree function $\deg$ is bounded from above.
\end{lemma}
\begin{proof}
We proceed by induction on the length $n=\rk(L)$ of $L$. The result is obvious if $n=0$ or $1$,
so we assume $L$ has length $n\geq2$ and make the following
\emph{Induction hypothesis}: on every Harder-Narasimhan lattice of length at most $n-1$, the degree function
is bounded from above.

By contradiction suppose $\deg$ is not bounded on $L$, and choose $r$ maximal such that the set
$$E_r=\{\deg(x):x\in L,\,\rk(x)=r\}$$
is unbounded
(observe $r\leq n-1$ since $E_n=\{\deg(1_L)\}$ is finite).

Now fix an $a\in L$ of rank $\rk(a)=1$, and let $x\in L$ vary with $\rk(x)=r$ and $\deg(x)$ arbitrarily large.
The sublattice $L/a$ has length $n-1$, so by our \emph{Induction hypothesis} we will have $x\not\in L/a$
as soon as $\deg(x)$ is large enough. This means $a\not\subseteq x$, and forces $a\meet x=0_L$
and $$\rk(a\join x)=r+1.$$
But then, by semimodularity,
$$\deg(a\join x)\geq\deg(x)+\deg(a)-\deg(0_L)$$
can be arbitrarily large when $\deg(x)$ is.
This means precisely that $E_{r+1}$ is unbounded, contradicting the maximality of $r$.
\end{proof}

This boundedness allows us to define a concave, piecewise linear function
\beq
P_L:[0,\rk(L)]\longto\R
\eeq
as the infimum of all linear functions whose graph lies above the set $\{p_{x}:\,x\in L\}$.
%or equivalently, above the set $\{(i,M_i);\,0\leq i\leq\rk(L)\}$,
%where $M_i=\sup_{x\in L,\,\rk(x)=i}\deg(x)$.
The graph of $P_L$ is then called the \emph{canonical polygon} of $L$.

Let $N$ be the number of sides of this polygon, and label its vertices $(i_\alpha,P_L(i_\alpha))$, for $0\leq\alpha\leq N$, according to increasing $\rk$-coordinate
\beq
0=i_0<i_1<\dots<i_N=\rk(L).
\eeq
Note that by construction these $i_\alpha$ are integers.

\begin{definition}
The successive slopes of $L$ are the slopes of its canonical polygon.
\end{definition}
Actually, depending on the authors, there are two conflicting labeling schemes for these successive slopes.
One first variant would be to define the $i$-th slope of $L$ as $P_L(i)-P_L(i-1)$, for $1\leq i\leq\rk(L)$. This gives a non-increasing sequence,
with repetition allowed.
The other variant, which is the one we will adopt, is to extract only the set of distinct values from this sequence.
So, for $1\leq\alpha\leq N$, we define the $\alpha$-th slope of $L$ as
\beq
\mu_\alpha(L)=P_L(i_\alpha)-P_L(i_\alpha-1)=\frac{P_L(i_\alpha)-P_L(i_{\alpha-1})}{i_\alpha-i_{\alpha-1}}.
\eeq
This gives a strictly decreasing sequence. We also set $\mumax=\mu_1$ and $\mumin=\mu_N$.
By construction we then have
\beq
\mumax(L)\geq\mu(L)=\frac{1}{\rk(L)}\sum_{1\leq\alpha\leq N}(i_\alpha-i_{\alpha-1})\mu_\alpha(L)\geq\mumin(L).
\eeq

Many examples of Harder-Narasimhan lattices satisfy a Northcott-type property, which reads that for any real $B$, there are only finitely many $x\in L$ with $\deg(x)\geq B$.
Thus, under this hypothesis, for any $i$, the supremum $\sup_{\rk(x)=i}\deg(x)$ is attained.
In particular, for any $\alpha$, there is an $x_\alpha\in L$ with $\rk(x_\alpha)=i_\alpha$ and $\deg(x_\alpha)=P_L(i_\alpha)$;
that means $p_{x_\alpha}$ is the corresponding vertex of the canonical polygon.
It turns out this last assertion holds \emph{unconditionally}, as will be seen in Theorem~\ref{th_filtr} below. %More precisely:
%\begin{proposition}
%Let $L$ be a Harder-Narasimhan lattice.
%Then, for any $\alpha$, there is an $x_\alpha\in L$ with $\rk(x_\alpha)=i_\alpha$ and $\deg(x_\alpha)=P_L(i_\alpha)$.
%Moreover this $x_\alpha$ is unique, and any $x'\neq x_\alpha$ with $\rk(x')=i_\alpha$ is subject to the gap condition
%\beq
%\deg(x')\leq P_L(i_\alpha)-(\mu_\alpha(L)-\mu_{\alpha+1}(L)).
%\eeq
%\end{proposition}
%\begin{proof}
%For $\alpha=0$ or $N$ we necessarily have $x_0=0$ and $x_N=L$, and there is nothing else to prove.
%So now we assume $0<\alpha<N$. It then suffices to show the following:
%if $x\in L$ satisfies $\rk(x)=i_\alpha$ and $\deg(x)\geq P_L(i_\alpha)-\epsilon$ for $\epsilon>0$ small enough,
%then any $x'\neq x$ with $\rk(x')=i_\alpha$ satisfies $\deg(x')\leq P_L(i_\alpha)-(\mu_\alpha(L)-\mu_{\alpha+1}(L))+\epsilon$.
%
%We derive this as a consequence of the parallelogram constraint \eqref{parallelogram}.
%Indeed, since $x'\neq x$, we have $\rk(x\meet x')<i_\alpha$, so we can write $\rk(x\meet x')=i_\alpha-k$ with $k\geq1$,
%and then $\rk(x\join x')=i_\alpha+k$.
%By concavity of $P_L$ and by definition of the slopes we deduce $\deg(x\meet x')\leq P_L(i_\alpha)-k\mu_\alpha(L)$ and $\deg(x\join x')\leq P_L(i_\alpha)+k\mu_{\alpha+1}(L)$.
%From semimodularity of the degree we then conclude $\deg(x')\leq\deg(x\meet x')+\deg(x\join x')-\deg(x)\leq P_L(i_\alpha)-k(\mu_\alpha(L)-\mu_{\alpha+1}(L))+\epsilon$.
%\end{proof}

\begin{lemma}
\label{lemme_th_filtr}
Let $x,y\in L$ satisfy $\rk(y)=i_\beta\leq\rk(x)=i_\alpha$ but $y\not\subset x$.
Suppose $\deg(x)\geq P_L(i_\alpha)-\epsilon$ for some $\epsilon>0$.
Then we have $\deg(y)\leq P_L(i_\beta)-(\mu_\beta(L)-\mu_{\alpha+1}(L))+\epsilon$.
\end{lemma}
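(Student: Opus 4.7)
The plan is to apply lower semimodularity of $\deg$ to $x$ and $y$, and then control the degrees of the resulting meet and join via the canonical polygon. First observe that the hypothesis $y\not\subset x$ implicitly forces $1\leq\beta$ (otherwise $y=0_L\subset x$) and $\alpha<N$ (otherwise $x=1_L$, so $y\subset x$), so that $\mu_\beta(L)$ and $\mu_{\alpha+1}(L)$ are both well-defined successive slopes. I would set $w=x\meet y$ and $z=x\join y$; since $y\not\subset x$, we have $w\subsetneq y$, so $\rk(w)<i_\beta$, and by modularity of $\rk$, $\rk(z)=i_\alpha+i_\beta-\rk(w)>i_\alpha$. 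Write $\delta=\rk(z)-i_\alpha=i_\beta-\rk(w)$, a positive integer.

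Lower semimodularity of $\deg$ gives $\deg(x)+\deg(y)\leq\deg(z)+\deg(w)$. The construction of $P_L$ as an upper envelope of the points $p_{x'}$ yields $\deg(z)\leq P_L(\rk(z))$ and $\deg(w)\leq P_L(\rk(w))$, so
\beqv
\deg(y)\leq P_L(\rk(z))+P_L(\rk(w))-\deg(x).
\eeqv
The next step is to estimate $P_L(\rk(z))+P_L(\rk(w))$ using concavity of $P_L$: its slope just to the right of $i_\alpha$ equals $\mu_{\alpha+1}(L)$ and only decreases thereafter, giving $P_L(i_\alpha+\delta)\leq P_L(i_\alpha)+\delta\,\mu_{\alpha+1}(L)$; symmetrically, its slope just to the left of $i_\beta$ equals $\mu_\beta(L)$ and is at least this at every earlier point, giving $P_L(i_\beta-\delta)\leq P_L(i_\beta)-\delta\,\mu_\beta(L)$. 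Adding these two inequalities, combining with the previous display, and using the hypothesis $\deg(x)\geq P_L(i_\alpha)-\epsilon$, one obtains
\beqv
\deg(y)\leq P_L(i_\beta)-\delta\,(\mu_\beta(L)-\mu_{\alpha+1}(L))+\epsilon.
\eeqv

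The final step is to replace the coefficient $\delta$ by $1$ in the gap term, which is the only real subtlety. This rests on two ingredients: the fact that $\delta\geq1$, since ranks in a finite-length modular lattice are integer-valued, and the \emph{strict} inequality $\mu_\beta(L)>\mu_{\alpha+1}(L)$, which holds precisely because the successive slopes were defined as the distinct values extracted from the consecutive differences of $P_L$ and so form a strictly decreasing sequence. Without this strict decrease the bound would degenerate in the critical borderline case; with it, the claimed inequality follows.
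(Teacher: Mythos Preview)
Your proof is correct and follows essentially the same approach as the paper's: apply the parallelogram constraint (rank modularity plus degree semimodularity) to $x$ and $y$, bound $\deg(x\meet y)$ and $\deg(x\join y)$ via concavity of $P_L$ around the vertices $i_\beta$ and $i_\alpha$, and combine. You are in fact a bit more explicit than the paper in checking that $\mu_\beta$ and $\mu_{\alpha+1}$ are well-defined and in justifying the replacement of $\delta$ by $1$ via the strict monotonicity of the successive slopes.
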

\begin{proof}
This is a consequence of the parallelogram constraint \eqref{parallelogram}.
Indeed, since $y\not\subset x$, we have $\rk(x\meet y)<i_\beta$, so we can write $\rk(x\meet y)=i_\beta-k$ with $k\geq1$, and then $\rk(x\join y)=i_\alpha+k$.
By concavity of $P_L$ and by definition of the slopes we deduce $\deg(x\meet y)\leq P_L(i_\beta)-k\mu_\beta(L)$ and $\deg(x\join y)\leq P_L(i_\alpha)+k\mu_{\alpha+1}(L)$.
From semimodularity of the degree we then conclude $\deg(y)\leq\deg(x\meet y)+\deg(x\join y)-\deg(x)\leq P_L(i_\beta)-k(\mu_\beta(L)-\mu_{\alpha+1}(L))+\epsilon$.
\end{proof}

%\begin{theorem}[compare \cite[Th.~1.18]{Grayson1984}]
\begin{theorem}[compare {\cite[Th.~1.18]{Grayson1984}}]
\label{th_filtr}
Let $L$ be a Harder-Narasimhan lattice.
\begin{enumerate}[(i)]
\item For any $\alpha$, there is an $x_\alpha\in L$ with $\rk(x_\alpha)=i_\alpha$ and $\deg(x_\alpha)=P_L(i_\alpha)$.
Moreover this $x_\alpha$ is unique, and in fact any $y\neq x_\alpha$ with $\rk(y)=i_\alpha$ is subject to the gap condition
\beqv
\deg(y)\leq P_L(i_\alpha)-(\mu_\alpha(L)-\mu_{\alpha+1}(L)).
\eeqv
\item These $x_\alpha$ form a chain, \ie for $\beta\leq\alpha$ we have $x_\beta\subset x_\alpha$.
\end{enumerate}
\end{theorem}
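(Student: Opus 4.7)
The plan is to apply Lemma \ref{lemme_th_filtr} in three different configurations, after two preliminary reductions. Because $i_\alpha$ is a vertex of the upper concave envelope $P_L$, standard envelope theory gives $P_L(i_\alpha)=\sup\{\deg(y):y\in L,\ \rk(y)=i_\alpha\}$. I set $\delta=\mu_\alpha(L)-\mu_{\alpha+1}(L)>0$, which is strictly positive by the strict decrease of successive slopes. The boundary cases $\alpha\in\{0,N\}$ are trivial, since $0_L$ and $1_L$ are the unique elements of ranks $i_0$ and $i_N$, so I would assume $1\leq\alpha\leq N-1$ in what follows.

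For uniqueness together with the gap condition, the key idea is a \emph{symmetric} use of the lemma. Given distinct $y_1,y_2$ of rank $i_\alpha$, Jordan-Dedekind forces $y_1\not\subset y_2$, and the lemma applied with $\beta=\alpha$, $x=y_2$, $y=y_1$ and $\epsilon=P_L(i_\alpha)-\deg(y_2)$ yields
\beqv
\deg(y_1)+\deg(y_2)\;\leq\;2P_L(i_\alpha)-\delta.
\eeqv
Hence at most one element of rank $i_\alpha$ can attain $P_L(i_\alpha)$, and assuming such an $x_\alpha$ exists, every other $y$ of rank $i_\alpha$ is pushed down by at least $\delta$, which is exactly the claimed gap.

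The delicate step is existence without a Northcott hypothesis. I would take a maximizing sequence $(y_n)$ of rank $i_\alpha$ with $\deg(y_n)\to P_L(i_\alpha)$, and notice that as soon as $\deg(y_n)>P_L(i_\alpha)-\delta/2$, the symmetric bound above forbids any two $y_n,y_m$ from being distinct. The sequence is therefore eventually constant equal to some $x_\alpha$, whose degree is the desired supremum.

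The chain property (ii) is then a direct single application of the lemma: for $\beta<\alpha$ with $x_\beta\not\subset x_\alpha$, taking $x=x_\alpha$, $y=x_\beta$, $\epsilon=0$ gives $\deg(x_\beta)\leq P_L(i_\beta)-(\mu_\beta(L)-\mu_{\alpha+1}(L))$, contradicting $\deg(x_\beta)=P_L(i_\beta)$ together with $\mu_\beta(L)>\mu_{\alpha+1}(L)$. The main obstacle, as noted, is the existence clause of (i); once the symmetric form of the lemma is isolated, it collapses any would-be maximizing sequence down to a single element, and the remaining statements are essentially direct plug-ins.
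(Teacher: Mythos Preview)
Your proof is correct and is essentially the paper's own argument spelled out in detail: both parts reduce to Lemma~\ref{lemme_th_filtr} with $\beta=\alpha$ (for~(i)) and $\epsilon\to 0$, and your ``symmetric bound'' together with the eventually-constant maximizing sequence is precisely what the paper's terse ``$\epsilon\to 0$'' encodes. One cosmetic point: when you set $\epsilon=P_L(i_\alpha)-\deg(y_2)$ or $\epsilon=0$, the lemma as stated asks for $\epsilon>0$; reading those applications as limits (just as the paper does) removes the issue, and your preliminary identity $P_L(i_\alpha)=\sup_{\rk(y)=i_\alpha}\deg(y)$ is indeed standard once one notes that $P_L$ coincides with the concave envelope of the finitely many points $(i,\sup_{\rk(y)=i}\deg(y))$, the degree being bounded above.
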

\begin{proof}
The first assertion follows from Lemma~\ref{lemme_th_filtr} with $\beta=\alpha$ and $\epsilon\to0$.
Likewise for the second assertion, if we set $x=x_\alpha$ and $y=x_\beta$ and take $\epsilon$ small enough, then from $x_\beta\not\subset x_\alpha$ we would get a contradiction in Lemma~\ref{lemme_th_filtr}.
\end{proof}

\begin{definition}
This chain
\beqv
0_L=x_0\subsetneq x_1\subsetneq\dots\subsetneq x_N=1_L,
\eeqv
with $p_{x_\alpha}=(i_\alpha,P_L(i_\alpha))$,
is called the \emph{canonical filtration} of $1_L$ (or of $L$).
\end{definition}

\begin{definition}
We say $L$ is \emph{semistable} if its canonical filtration is trivial, which means $L$ has only one slope ($N=1$), or equivalently, if any $x\neq0_L,1_L$ has slope $\mu(x/0_L)\leq\mu(L)$, or equivalently, co-slope $\mu(1_L/x)\geq\mu(L)$.

Moreover we say $L$ is \emph{stable} if any $x\neq0_L,1_L$ has slope $\mu(x/0_L)<\mu(L)$, or equivalently, co-slope $\mu(1_L/x)>\mu(L)$.
\end{definition}

\begin{proposition}
\label{colle}
Let $L$ be a Harder-Narasimhan lattice.
\begin{enumerate}[(i)]
\item Let $x\in L$, $x\neq0_L,1_L$, with $\mumin(x/0_L)\geq\mumax(1_L/x)$. Then the canonical polygon of $L$ can be obtained by pasting together those of $x/0_L$ and $1_L/x$.
\item Let $x\in L$, $x\neq0_L,1_L$. Then $x$ is part of the canonical filtration of $L$ if and only if $\mumin(x/0_L)>\mumax(1_L/x)$.
\item A chain $0_L=x_0\subsetneq x_1\subsetneq\dots\subsetneq x_N=1_L$ is the canonical filtration of $L$ if and only if all $x_i/x_{i-1}$ are semistable with $\mu(x_{i+1}/x_i)>\mu(x_i/x_{i-1})$.
\end{enumerate}
\end{proposition}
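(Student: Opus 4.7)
The plan is to prove (i) first as the main technical step, then derive (ii) as a corollary, and obtain (iii) by iterating (i) together with (ii). Define the \emph{pasted function} $P$ on $[0,\rk(L)]$ by $P(t)=P_{x/0_L}(t)$ for $t\in[0,\rk(x)]$ and $P(t)=P_{1_L/x}(t-\rk(x))$ for $t\in[\rk(x),\rk(L)]$; the two pieces agree at $t=\rk(x)$ since both evaluate to $\deg(x)$, and the hypothesis $\mumin(x/0_L)\geq\mumax(1_L/x)$ ensures no slope increase at the junction, so $P$ is concave. Moreover $P_{x/0_L}\leq P_L|_{[0,\rk(x)]}$ and $P_{1_L/x}(\cdot)\leq P_L(\cdot+\rk(x))$ because every $y$ in $x/0_L$ or $1_L/x$ is also an element of $L$, so $P\leq P_L$ automatically. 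Hence once we show $P$ majorizes every $p_y$, minimality of $P_L$ forces $P_L\leq P$, whence $P=P_L$ and (i) follows, since the vertices of $P$ are exactly those of the two sub-polygons glued together.

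The key inequality $P(\rk(y))\geq\deg(y)$ comes from semimodularity. For arbitrary $y\in L$ set $y_1=x\meet y\in x/0_L$ and $y_2=x\join y\in 1_L/x$. Modularity of rank gives $\rk(y)=\rk(y_1)+(\rk(y_2)-\rk(x))$, while lower semimodularity of $\deg$ gives $\deg(y)\leq\deg(y_1)+\deg(y_2)-\deg(x)$. Setting $a=\rk(y_1)$ and $b=\rk(y_2)-\rk(x)$, and bounding $\deg(y_1)\leq P_{x/0_L}(a)$, $\deg(y_2)\leq P_{1_L/x}(b)$, the target reduces to $P_{x/0_L}(a)+P_{1_L/x}(b)-\deg(x)\leq P(a+b)$. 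This splits into the two cases $a+b\leq\rk(x)$ and $a+b>\rk(x)$; in each, concavity of the two sub-polygons (which controls the slopes from $a$ to $\rk(x)$ and from $0$ to $b$, respectively from $a$ to $\rk(x)$-type chords on the other side), combined with $\mumin(x/0_L)\geq\mumax(1_L/x)$, yields the bound. This case analysis is the main obstacle.

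For (ii), if $\mumin(x/0_L)>\mumax(1_L/x)$ then the pasted polygon $P$ exhibits a strict slope drop at abscissa $\rk(x)$, so $p_x$ is a vertex of $P_L$, and uniqueness in Theorem~\ref{th_filtr}(i) identifies $x$ with some $x_\alpha$ of the canonical filtration. Conversely, if $x=x_\alpha$, one observes that the canonical polygon of $x/0_L$ coincides with $P_L|_{[0,i_\alpha]}$: it lies below it by the argument in (i), yet it passes through each of the vertices $p_{x_0},\dots,p_{x_\alpha}$ of $P_L$, so by concavity the two agree. Analogously for $1_L/x$. Therefore $\mumin(x/0_L)=\mu_\alpha(L)$ and $\mumax(1_L/x)=\mu_{\alpha+1}(L)$, and the strict inequality follows from the strict ordering of successive slopes of $L$.

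For (iii), the forward direction is immediate from the identification of canonical polygons of subquotients just used: each $x_i/x_{i-1}$ has canonical polygon a single segment of slope $\mu_i(L)$ (hence is semistable) and the $\mu_i(L)$ are strictly ordered by construction. For the converse, semistability makes each $x_i/x_{i-1}$ have canonical polygon a single segment of slope $\mu(x_i/x_{i-1})$; apply (i) iteratively along the chain, at each step the $\mumin$ of the previously glued portion equals the most recent slope and the strict slope hypothesis makes it dominate $\mumax$ of the next segment, so the gluing condition holds. This pastes all segments into the canonical polygon of $L$, with vertices exactly at the $p_{x_i}$, and Theorem~\ref{th_filtr}(i) identifies the chain with the canonical filtration.
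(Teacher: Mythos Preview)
Your proof is correct and is essentially a self-contained reconstruction of the argument the paper defers to (the paper's own proof is a one-line citation of Theorem~\ref{th_filtr} and \cite[Cor.~1.29--1.31]{Grayson1984}); your key step---majorizing every $p_y$ by the pasted polygon via the parallelogram constraint applied to $y_1=x\meet y$, $y_2=x\join y$---is exactly Grayson's method. One small remark: in (iii) you have (rightly) read the slope inequality as $\mu(x_i/x_{i-1})>\mu(x_{i+1}/x_i)$, which is what is needed and what holds for the canonical filtration; the printed ``$\mu(x_{i+1}/x_i)>\mu(x_i/x_{i-1})$'' is a typo inherited from Grayson's opposite sign convention.
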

\begin{proof}
Consequence of Theorem~\ref{th_filtr}, as in \cite[Cor.~1.29-1.31]{Grayson1984}.
\end{proof}

All these constructions behave well under certain ``affine'' transformations:
\begin{lemma}
\label{modif_affine}
Let $(L,\deg)$ be a Harder-Narasimhan lattice, with canonical polygon $P_L$ and slopes $\mu_1>\dots>\mu_N$.
\begin{enumerate}[(i)]
\item Let $L^{\textrm{opp}}$ be the opposite lattice of $L$. Then $(L^{\textrm{opp}},\deg)$ is a Harder-Narasimhan lattice.
Its canonical polygon is the image of $P_L$ under the plane transformation $(\xi,\eta)\mapsto(\rk(L)-\xi,\eta)$.
Its canonical filtration is the opposite of that of $L$, with slopes $-\mu_N>\dots>-\mu_1$.
\item Let $a,b\in\R$ and $c\in\R_{>0}$ be constants. Then $(L,a+b\rk+c\deg)$ is a Harder-Narasimhan lattice.
Its canonical polygon is the image of $P_L$ under the plane transformation $(\xi,\eta)\mapsto(\xi,a+b\xi+c\eta)$.
It has the same canonical filtration as $(L,\deg)$, with slopes $b+c\mu_1>\dots>b+c\mu_N$.
\end{enumerate}
As a consequence, as soon as one of these Harder-Narasimhan lattice is semistable (resp. stable), then all of them are.
\end{lemma}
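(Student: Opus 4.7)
The plan is to verify, for each of the two operations, the three things that make up the lemma: preservation of the Harder--Narasimhan axioms (modular lattice of finite length, lower semimodular $\deg$, bounded above), the formula for the canonical polygon as an image of $P_L$, and the resulting formulas for the canonical filtration and the slopes. The final consequence on (semi)stability will drop out immediately.

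For (i), I first recall that $L^{\mathrm{opp}}$ swaps meet and join and reverses the order, so $\rk_{L^{\mathrm{opp}}}(x)=\rk(L)-\rk(x)$, and the Jordan--Dedekind chain condition together with modularity of $\rk$ are symmetric in $\meet$ and $\join$; hence $L^{\mathrm{opp}}$ is still modular of finite length. The lower semimodular inequality \eqref{semimodularity} for $\deg$ is also symmetric in $\meet$ and $\join$, so it holds on $L^{\mathrm{opp}}$; and the upper bound on $\deg$ is untouched. The point attached to $x$ in $L^{\mathrm{opp}}$ is $(\rk(L)-\rk(x),\deg(x))$, so the whole point cloud is the image of the original one under $(\xi,\eta)\mapsto(\rk(L)-\xi,\eta)$. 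Upper concave envelopes commute with this reflection, giving the stated $P_{L^{\mathrm{opp}}}$. Reading off the vertices of the reflected polygon, with their abscissae $\rk(L)-i_\alpha$, produces the reversed chain and slopes $-\mu_N>\dots>-\mu_1$.

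For (ii), I check that $\deg'=a+b\rk+c\deg$ is lower semimodular: constants and multiples of the modular function $\rk$ are modular, and $c\deg$ is lower semimodular since $c>0$, so the sum is lower semimodular. It is bounded above because $\deg$ is and $\rk$ takes finitely many values. The point attached to $x$ becomes the image of $p_x$ under $(\xi,\eta)\mapsto(\xi,a+b\xi+c\eta)$, an affine map which fixes the abscissa and is strictly monotone in $\eta$; it therefore preserves taking fiberwise maxima and upper concave envelopes, giving the claimed $P_{L,\deg'}$. Since the abscissae of the vertices are unchanged and the maximizer at each rank level is unchanged, Theorem~\ref{th_filtr} identifies the canonical filtration with the original one, and the slope formula $b+c\mu_\alpha$ is then a direct affine computation.

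For the last consequence, semistability is just $N=1$, an invariant of the set of vertex abscissae, hence preserved by both operations; stability, which by definition amounts to the strict inequalities $\mu(x/0_L)<\mu(L)$ (equivalently co-slopes $>\mu(L)$), is preserved by (ii) because $c>0$, and is preserved by (i) by the slope/co-slope equivalence built into the definition of stability. Really no step should be hard; the only subtlety is orientation bookkeeping in (i), in particular making sure that ``$\deg$ is lower semimodular on $L^{\mathrm{opp}}$'' uses the right symmetry of \eqref{semimodularity} in $\meet$ and $\join$, and that the stability condition on $L$ matches the one on $L^{\mathrm{opp}}$ through the co-slope reformulation rather than requiring the opposite inequality.
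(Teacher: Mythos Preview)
Your proposal is correct and follows essentially the same approach as the paper, namely direct verification of the definitions; the paper's own proof is simply ``Clear.'' Your writeup supplies the routine bookkeeping (symmetry of \eqref{semimodularity} under swapping $\meet$ and $\join$, affine transport of the upper concave envelope, and the slope/co-slope reformulation for stability under (i)) that the paper leaves to the reader.
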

\begin{proof}
Clear.
\end{proof}
We will say $(L,\deg)$ is normalized if $\deg(0_L)=0$. We will say it is co-normalized if $\deg(1_L)=0$.
Thanks to the second part of the Lemma, we see that it is always possible to modify the degree function in order to force one, or even both of these conditions, 
without changing the essential properties of the lattice.

\begin{example}
\label{exmatroid}
We recall that a matroid \cite{Whitney1935}\cite{Oxley} is a pair $\cM=(E,\cI)$ where $E$ is a set, and $\cI\subset 2^E$ a collection of subsets of $E$ (the \emph{independent} sets), such that:
\begin{itemize}
\item $\emptyset\in\cI$;
\item if $I\in\cI$ and $I'\subset I$, then $I'\in\cI$;
\item if $I_1,I_2\in\cI$ and $\#I_1<\#I_2$, then there is some $e\in I_2\moins I_1$ such that $I_1\cup\{e\}\in\cI$.
\end{itemize}
For any subset $J\subset E$ (not necessarily independent) we set
\beq
\label{defr}
r(J)=\max\{\#I:\;I\in\cI,\,I\subset J\}.
\eeq
It is easily seen \cite[Lemma~1.3.1]{Oxley} that this function $r$ is upper semimodular.
As a consequence, if $E$ is finite, the lattice $L_E$ of all subsets of $E$ is modular of finite length, and becomes a Harder-Narasimhan lattice thanks to the degree function
\beq
\label{degmatroid}
\deg(J)=k-r(J)
\eeq
where $k$ could be any arbitrary constant; we will set $k=r(E)$, which makes $\deg$ nonnegative and co-normalized.

We observe a quite unfortunate conflict between the well-established terminology of the domain and ours.
Indeed, $r(J)$ is traditionally called the rank of $J$, while for us the rank of $J$ in $L_E$ is $\#J$.
Worse, some authors in matroid theory call $\#J$ the degree of $J$!
An explanation for this inversion of terms will come from Remark~\ref{Galoismatroids} below.
\end{example}

\begin{example}
\label{exHN}
Let $E$ be a vector bundle, \ie a locally free sheaf of finite rank, on a projective curve over a field.
Given a subsheaf $F\subset E$, we let $F^{\sat}$ be the smallest subsheaf of $E$ containing $F$ such that $E/F^{\sat}$ is locally free.
We say $F$ is saturated if $F=F^{\sat}$. The saturated subsheaves of $E$ form a modular lattice $L_E$ of finite length,
whose meet and join are given by $F\meet F'=F\cap F'$ and $F\join F'=(F+F')^{\sat}$. Then, the usual degree function for vector bundles
is lower semimodular on $L_E$, so it makes it into a Harder-Narasimhan lattice.
Its canonical filtration was first introduced by Harder and Narasimhan \cite{HN1975} as a tool in the computation of the number of points (and more generally the \'etale cohomology) of certain moduli spaces over finite fields.
\end{example}

\begin{example}
\label{exSG}
Let $E$ be a euclidean lattice, \ie a free $\Z$-module of finite rank, equipped with a positive definite scalar product.
Given a submodule $F\subset E$, we let $F^{\sat}$ be the smallest submodule of $E$ containing $F$ such that $E/F^{\sat}$ is torsion-free.
We say $F$ is saturated if $F=F^{\sat}$. The saturated submodules of $E$ form a modular lattice $L_E$ of finite length,
whose meet and join are given by $F\meet F'=F\cap F'$ and $F\join F'=(F+F')^{\sat}$.
Then, the arithmetic degree function $\deg(F)=-\log\covol(F)$ is lower semimodular on $L_E$, so it makes it into a Harder-Narasimhan lattice.
The analogy with Example~\ref{exHN} was first noticed by Stuhler \cite{Stuhler1976} and used to revisit certain results in reduction theory.
It was then generalized by Grayson \cite{Grayson1984} in order to study the cohomology of arithmetic groups, by constructing manifolds with boundary on which the group naturally acts, alternative to those of \cite{BS1973}.
\end{example} 

\begin{remark}
\label{genHN}
In both Examples~\ref{exHN} and~\ref{exSG} the rank and degree functions on $L_E$ naturally extend to the lattice $L'_E$ of all (possibly non-saturated) submodules of $E$, and they still satisfy the parallelogram constraint~\eqref{parallelogram}.
As a consequence, although $L'_E$ is not of finite length so our Definition~\ref{def_HN_lattice} does not apply, slopes and the canonical filtration are still well defined for $L'_E$, and they coincide with those of $L_E$.
In fact the map $F\mapsto F^{\sat}$ from $L'_E$ onto $L_E$ is a morphism of lattices, and it is interesting to note that the degree function becomes modular when restricted to its fibers.
Continuing in this spirit it is possible to generalize our theory to lattices that are not of finite length.
Moreover, adding suitable topological conditions, it is even possible to allow the degree function to be $\R$-valued instead of $\Z$-valued.
We will not elaborate on these ideas since they are not needed in the sequel.
\end{remark}

\begin{remark}
In most Harder-Narasimhan categories (in the sense of~\cite{Chen2010}), the lattice of subobjects of an object is a Harder-Narasimhan lattice in our sense.
However, this fails for \cite[Ex.~2]{Chen2010}, the category of vector spaces with two hermitian norms.
Actually, the degree function in this category does not satisfy our semimodularity condition, which~\cite{Chen2010} omitted in its definitions.
This omission leads to certain problems, for instance, the proof of \cite[Prop.~4.5]{Chen2010} is incorrect
(the equality in \cite[p.~195, l.~10]{Chen2010} holds only if the exact sequence lies in $\mathcal{E}_A$, which is not assumed).
A corrected version had been planned~\cite{Chen2016}.

Still, vector spaces with two norms enjoy certain properties very close to ours, and it would be intesting to investigate whether a weaker version of Harder-Narasimhan theory could be developed in order to reflect this.
\end{remark}

\vspace{.5\baselineskip}
\mysubsection{Modular lattices under a Galois connection}

We recall \cite[\S V.8]{Birkhoff} that a \emph{Galois connection} between two lattices $L$ and $M$ is a pair of \emph{order-reversing}\footnote{
So this is sometimes called an ``antitone'' Galois connection.
There is the alternative notion of a ``monotone'' Galois connection, in which the maps are order-preserving, leading to an essentially equivalent theory.}
%maps $(.)^\LM:L\to M$ and $(.)^\ML:M\to L$ such that any $x\in L$ and $m\in M$ satisfy
maps $(.)^\circ:L\to M$ and $(.)^\circ:M\to L$ such that any $x$ in $L$ or $M$ satisfies
\beq
%x\subset(x^\LM)^\ML\qquad\textrm{and}\qquad m\subset(m^\ML)^\LM.
\label{closure}
x\subset x^{\circ\circ}.
\eeq
%Another equivalent characterization is that we have $x\subset m^\ML$ if and only if $m\subset x^\perp$.
%
%It is then easily seen that we have $x^\LM=((x^\LM)^\ML)^\LM$, hence the map $x\mapsto\overline{x}=(x^\LM)^\ML$ is a \emph{closure operator} on $L$,
It then follows that $x^\circ=x^{\circ\circ\circ}$, hence the map $x\mapsto\overline{x}=x^{\circ\circ}$ is a \emph{closure operator} on $L$ and on $M$,
\ie it satisfies $x\subset\overline{x}$ and $\overline{x}=\overline{\overline{x}}$.
We say $x$ is \emph{closed} if $x=\overline{x}$,
%which happens precisely when $x$ is of the form $x=m^\ML$ for some $m$ (\eg $m=x^\LM$).
which happens precisely when $x$ is of the form $x=a^\circ$ for some~$a$ (\eg $a=x^\circ$).

It is also easily shown that for any $l\in L$ and $m\in M$ we have
\beq
\label{lm-ml}
l\subset m^\circ\quad\equivaut\quad m\subset l^\circ,
\eeq
and in fact one could show conversely that this property entirely characterizes Galois connections,
\ie that \eqref{lm-ml} implies the conditions in the definition.

\begin{proposition}
\label{inGal}
%Given such a Galois connection, we have, for any $x,y\in L$,
Let $L,M$ be given with such a Galois connection, and take $x,y\in L$ or $x,y\in M$. Then we have
\beq
\label{joinGal}
%x^\LM\join\,y^\LM\subset(x\meet y)^\LM
x^\circ\join y^\circ\subset(x\meet y)^\circ
\eeq
and
\beq
\label{meetGal}
%x^\LM\meet\,y^\LM=(x\join y)^\LM.
x^\circ\meet\,y^\circ=(x\join y)^\circ.
\eeq
\end{proposition}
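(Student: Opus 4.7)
The plan is to use the defining property that $(.)^\circ$ is order-reversing, together with the adjunction-type characterization \eqref{lm-ml}, and to keep track of both directions of each containment.

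For the inclusion \eqref{joinGal}, I would start from the trivial observations $x\meet y\subset x$ and $x\meet y\subset y$. Applying $(.)^\circ$, which reverses order, gives $x^\circ\subset(x\meet y)^\circ$ and $y^\circ\subset(x\meet y)^\circ$. Taking the join in the target lattice then yields $x^\circ\join y^\circ\subset(x\meet y)^\circ$, which is exactly \eqref{joinGal}. Note that this is only an inclusion because $(.)^\circ$ need not be surjective; equality would require $(x\meet y)^\circ$ to be the join of elements in the image, which is not automatic.

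For \eqref{meetGal}, I would split into the two inclusions. The easy one, $(x\join y)^\circ\subset x^\circ\meet y^\circ$, is dual to the argument above: from $x\subset x\join y$ and $y\subset x\join y$ and order-reversal, we get $(x\join y)^\circ\subset x^\circ$ and $(x\join y)^\circ\subset y^\circ$, then take meet. For the reverse inclusion I would use \eqref{lm-ml}. Setting $z=x^\circ\meet y^\circ$, we want to show $z\subset(x\join y)^\circ$, which by \eqref{lm-ml} is equivalent to $x\join y\subset z^\circ$. Now $z\subset x^\circ$ gives $x^{\circ\circ}\subset z^\circ$, and combined with $x\subset x^{\circ\circ}$ this yields $x\subset z^\circ$; symmetrically $y\subset z^\circ$. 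Taking the join in the source lattice gives $x\join y\subset z^\circ$, as desired.

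The main conceptual point (rather than obstacle, since the arithmetic is very short) is recognizing that \eqref{joinGal} cannot in general be sharpened to an equality: the image of $(.)^\circ$ consists of closed elements, and in the closed sublattice the join is typically not computed as it is in the ambient lattice $M$. By contrast, \eqref{meetGal} does hold with equality precisely because the meet of two closed elements is already closed — an observation implicit in the proof above, since $x^\circ\meet y^\circ$ coincides with $(x\join y)^\circ$ and the latter is visibly in the image of $(.)^\circ$.
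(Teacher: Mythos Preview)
Your proof is correct and follows essentially the same route as the paper: order-reversal gives \eqref{joinGal} and one half of \eqref{meetGal}, and for the remaining inclusion you show $x\join y\subset(x^\circ\meet y^\circ)^\circ$ via closure and order-reversal, then invoke \eqref{lm-ml}. The only cosmetic difference is that the paper packages the step $x^{\circ\circ}\join y^{\circ\circ}\subset(x^\circ\meet y^\circ)^\circ$ as an application of the already-proven \eqref{joinGal} to $x^\circ,y^\circ$, whereas you derive $x\subset z^\circ$ and $y\subset z^\circ$ directly; the underlying argument is identical.
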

\begin{proof}
Since $(.)^\circ$ is order-reversing we get $x^\circ\subset(x\meet y)^\circ$ and $y^\circ\subset(x\meet y)^\circ$, hence~\eqref{joinGal}.
Likewise we get $x^\circ\supset(x\join y)^\circ$ and $y^\circ\supset(x\join y)^\circ$, hence
\beq
\label{halfmeetGal}
x^\circ\meet\,y^\circ\supset(x\join y)^\circ
\eeq 
which is half of~\eqref{meetGal}.

On the other hand by \eqref{closure} and \eqref{joinGal} we have
\beq
\label{beuargh}
%x\join y\;\,\overset{\eqref{closure}}{\subset}\;x^{\circ\circ}\join y^{\circ\circ}\;\overset{\eqref{joinGal}}{\subset}\;(x^\circ\meet\,y^\circ)^\circ
x\join y\;\,\subset\;x^{\circ\circ}\join y^{\circ\circ}\;\subset\;(x^\circ\meet\,y^\circ)^\circ.
\eeq
%hence
%\beq
%x^\circ\meet\,y^\circ\;\overset{\eqref{closure}}{\subset}\;(x^\circ\meet\,y^\circ)^{\circ\circ}\;\overset{\phantom{{}^\circ}\eqref{beuargh}^\circ}{\subset}\,(x\join y)^\circ
%\eeq
%which finishes the proof.
Applying \eqref{lm-ml} we deduce $x^\circ\meet\,y^\circ\subset(x\join y)^\circ$ and conclude.
\end{proof}
As a consequence we get that the subset $L^{\cl}$ of closed elements in $L$ is a lattice with meet $x\meet y$ and join $\overline{x\join y}$, so the natural map $\overline{(.)}:L\to L^{\cl}$ becomes a morphism of lattices
(although we will make no use of it, it is amusing to note the analogy with the map $(.)^{\sat}:L'_E\to L_E$ of Remark~\ref{genHN}).
Moreover, $(.)^\circ$ then becomes an anti-isomorphism between the lattices $L^{\cl}$ and $M^{\cl}$.

\begin{theorem}
\label{thGal}
Let $L,M$ be two modular lattices of finite length.
Let $\rk_L$ be the natural rank function of $L$, and $\rk_M$ that of $M$.
Suppose given a Galois connection $(.)^\circ$ between $L$ and $M$.
Then the degree functions
\beqv
\deg_L(l)=\rk_M(l^\circ)
\eeqv
and $\deg_M(m)=\rk_L(m^\circ)$, for $l\in L$ and $m\in M$, make $L$ and $M$ into Harder-Narasimhan lattices.
\end{theorem}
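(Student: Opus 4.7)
The plan is to verify the three requirements in Definition \ref{def_HN_lattice} for $L$ equipped with $\deg_L$; the statement for $M$ will follow by symmetry, since the Galois connection is symmetric in $L$ and $M$. Modularity and finite length of $L$ are assumed, so the only substantive points are (a) that $\deg_L$ is bounded above, and (b) that $\deg_L$ is lower semimodular. Boundedness is immediate: for every $l\in L$ we have $l^\circ\subset 1_M$, hence $\deg_L(l)=\rk_M(l^\circ)\leq\rk_M(1_M)$, which is finite because $M$ is of finite length.

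The heart of the proof is semimodularity, and this is where I would invoke Proposition \ref{inGal}. For $x,y\in L$, that proposition yields the identity $(x\join y)^\circ=x^\circ\meet y^\circ$ and the inclusion $x^\circ\join y^\circ\subset(x\meet y)^\circ$. Applying $\rk_M$, and using that $\rk_M$ is order-preserving, one gets
\beq
\label{plan:combine}
\rk_M((x\join y)^\circ)+\rk_M((x\meet y)^\circ)\;\geq\;\rk_M(x^\circ\meet y^\circ)+\rk_M(x^\circ\join y^\circ).
\eeq
Now $M$ being modular means (by the characterization recalled before Definition \ref{def_HN_lattice}) that $\rk_M$ itself is a modular function on $M$, so the right-hand side of \eqref{plan:combine} equals $\rk_M(x^\circ)+\rk_M(y^\circ)=\deg_L(x)+\deg_L(y)$. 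This is exactly the lower semimodularity inequality \eqref{semimodularity} for $\deg_L$.

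I expect no real obstacle here: the work was already done in Proposition \ref{inGal}, and the key input beyond it is just the fact that the rank function of the modular lattice $M$ is modular (not merely semimodular), which is what converts the combination of an equality and an inequality coming from the Galois connection into the single one-sided inequality that semimodularity requires. It is worth noting the slight asymmetry in the roles played by $\join$ and $\meet$ in Proposition \ref{inGal}: the $\join$-side is an equality and the $\meet$-side only an inclusion, and it is precisely this asymmetry that forces $\deg_L$ to be only \emph{lower} semimodular rather than modular in general.
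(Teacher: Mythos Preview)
Your argument is correct and follows essentially the same route as the paper: invoke Proposition~\ref{inGal} to compare $\rk_M(x^\circ\join y^\circ)+\rk_M(x^\circ\meet y^\circ)$ with $\rk_M((x\meet y)^\circ)+\rk_M((x\join y)^\circ)$, and then use modularity of $\rk_M$ to collapse the former to $\deg_L(x)+\deg_L(y)$. You even add the explicit check that $\deg_L$ is bounded above (by $\rk_M(1_M)$), which the paper's proof leaves implicit.
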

\begin{proof}
The roles of $L$ and $M$ being symmetric, it suffices to show that $\deg_L$ is lower semimodular.
And indeed, for $x,y\in L$ we find
\beq
\label{preuvesousmodularite}
\begin{split}
\deg_L(x)+\deg_L(y)&=\rk_M(x^\circ)+\rk_M(y^\circ)\\
&=\rk_M(x^\circ\join y^\circ)+\rk_M(x^\circ\meet y^\circ)\\
&\leq\rk_M((x\meet y)^\circ)+\rk_M((x\join y)^\circ)\\
&=\deg_L(x\meet y)+\deg_L(x\join y)
\end{split}
\eeq
where we used that $\rk_M$ is modular and the intermediate inequality comes from Proposition~\ref{inGal}.
\end{proof}

Because of the identity $x^{\circ\circ\circ}=x^\circ$, we observe that all $l\in L$ satisfy
\beq
\label{degbar=deg}
\deg_L(\overline{l})=\deg_L(l)
\eeq
and likewise $\deg_M(\overline{m})=\deg_M(m)$ for all $m\in M$.

\vspace{\baselineskip}

It is easily seen (say from \eqref{lm-ml}) that we have $0_L^\circ=1_M$ and $0_M^\circ=1_L$, hence $1_L$ and $1_M$ are closed.
From this we also find $1_M^\circ=\overline{0}_L$ and $1_L^\circ=\overline{0}_M$, which motivates the following:

\begin{definition}
We say that $L$ is separated (with respect to the given Galois connection) if $1_M^\circ=0_L$, or equivalently, if $0_L=\overline{0}_L$ is closed.
And we say the Galois connection is separated if both $L$ and $M$ are.
\end{definition}

Any Galois connection between $L$ and $M$ induces (by restriction) a separated Galois connection between $L/\overline{0}_L$ and $M/\overline{0}_M$.
So by Theorem~\ref{thGal} it defines a degree function on $L/\overline{0}_L$ (resp. on $M/\overline{0}_M$), which is just a translate of the restriction of the degree function of $L$ (resp. of $M$).
Moreover, an element $x$ in $L/\overline{0}_L$ (resp. in $M/\overline{0}_M$) is closed if and only if it is closed in $L$ (resp. in $M$).

\begin{lemma}
\label{lemsepGal}
We have $\mumax(L)\leq0$ and $\deg_M(1_M)\geq0$. Moreover, the following assertions are equivalent:
\begin{itemize}
\item $\mumax(L)=0$
\item $\deg_M(1_M)>0$
\item $L$ is not separated.
\end{itemize}
If any of these assertions holds, the first nonzero element of the canonical filtration of $L$ is $\overline{0}_L$, with slope $0$,
and rank $\rk_L(\overline{0}_L)=\deg_M(1_M)$.
And then the subsequent elements of the canonical filtration of $L$, and its subsequent slopes, are those of $L/\overline{0}_L$ (relative to the induced separated Galois connection).
\end{lemma}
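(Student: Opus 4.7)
The plan is to read off everything from two basic computations, and then invoke Proposition~\ref{colle}.

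First I would prove the two inequalities directly from the definitions. Recall $0_L^\circ = 1_M$ and $1_M^\circ = \overline{0}_L$. For any $x \in L$, the inclusion $x^\circ \subset 1_M$ combined with modularity of $\rk_M$ (and Jordan--Dedekind in the modular lattice $M$) gives
\beqv
\deg_L(x) = \rk_M(x^\circ) \leq \rk_M(1_M) = \deg_L(0_L),
\eeqv
hence every slope $\mu(x/0_L) \leq 0$ and $\mumax(L) \leq 0$. Likewise $\deg_M(1_M) = \rk_L(1_M^\circ) = \rk_L(\overline{0}_L) \geq 0$, with strict inequality precisely when $\overline{0}_L \neq 0_L$, i.e.\ when $L$ is not separated.

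For the equivalences, I would observe that $\mumax(L) = 0$ occurs iff there is some $x \neq 0_L$ with $\deg_L(x) = \deg_L(0_L)$, i.e.\ $\rk_M(x^\circ) = \rk_M(1_M)$. Since $x^\circ \subset 1_M$ in the modular lattice $M$ of finite length, this forces $x^\circ = 1_M$, hence $\overline{x} = 1_M^\circ = \overline{0}_L$. Conversely, any $0_L \neq x \subset \overline{0}_L$ satisfies $\overline{0}_L \subset \overline{x} \subset \overline{\overline{0}_L} = \overline{0}_L$ by monotonicity and idempotence of the closure, so $\overline{x} = \overline{0}_L$ and $x^\circ = 1_M$, giving $\deg_L(x) = \deg_L(0_L)$. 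Thus $\mumax(L) = 0$ is equivalent to $\overline{0}_L \neq 0_L$, closing the three-way equivalence with $\deg_M(1_M) > 0$ handled above.

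Finally, suppose these conditions hold. The computation just made shows that the elements of degree $\deg_L(0_L)$ are exactly those of $\overline{0}_L / 0_L$, so this sublattice is semistable of slope $0$, with maximal-rank element $\overline{0}_L$ itself. In particular the canonical polygon of $L$ starts with a horizontal segment of length $\rk_L(\overline{0}_L) = \deg_M(1_M)$ ending at the point $p_{\overline{0}_L}$. On the other hand, for any $x \supsetneq \overline{0}_L$ we have $x^\circ \subsetneq \overline{0}_L^\circ = 1_M$, so $\deg_L(x) < \deg_L(\overline{0}_L)$, whence $\mumax(L/\overline{0}_L) < 0$. The inequality $\mumin(\overline{0}_L/0_L) = 0 > \mumax(L/\overline{0}_L)$ lets Proposition~\ref{colle}(ii) place $\overline{0}_L$ in the canonical filtration of $L$, and Proposition~\ref{colle}(i) then pastes the canonical polygons of $\overline{0}_L/0_L$ and $L/\overline{0}_L$ together, yielding the asserted description of the remaining terms and slopes.

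The only non-routine ingredient is the observation, extracted from the identity $x^{\circ\circ} = \overline{x}$, that $\deg_L(x) = \deg_L(0_L)$ characterises elements of $\overline{0}_L/0_L$; everything else is a direct reduction to Proposition~\ref{colle}.
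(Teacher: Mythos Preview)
Your proof is correct and follows essentially the same path as the paper's. Both arguments rest on the observation that $\deg_L(x)=\deg_L(0_L)$ forces $x^\circ=1_M$, hence $x\subset\overline{0}_L$, and both conclude by invoking Proposition~\ref{colle}.

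The only structural difference is one of direction: the paper starts from the canonical filtration $0_L=x_0\subsetneq x_1\subsetneq\cdots$, shows $\mu_1\le0$, and then identifies $x_1$ as the largest element with $x_1^\circ=1_M$, namely $\overline{0}_L$; you instead first characterise the full interval $\overline{0}_L/0_L$ as the locus where $\deg_L=\deg_L(0_L)$, deduce semistability of this piece and $\mumax(L/\overline{0}_L)<0$, and then apply Proposition~\ref{colle}(ii) to place $\overline{0}_L$ in the filtration. Your route is slightly more self-contained (it avoids referring to the $x_\alpha$ of Theorem~\ref{th_filtr} until the very end), while the paper's is a touch shorter. Either way the content is the same.
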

\begin{proof}
%Since $0_L^\circ=1_M$ we have $P_L(0_L)=\deg_L(0_L)=\rk_M(1_M)=\rk(M)$, while for any $l\in L$ we have $\deg_L(l)=\rk_M(l^\circ)\leq\rk(M)$, hence $\mumax(L)\leq0$.
%Moreover this inequality is an equality if and only if there is an $l\supsetneq 0_L$ with $\rk_M(l^\circ)=\rk(M)$, \ie with $l^\circ=1_M$, or equivalently with $1_M^\circ\supset l\supsetneq 0_L$, which means precisely that $L$ is not separated.
%
%Likewise for any $m\in M$ we have $\deg_M(m)=\rk_L(m^\circ)\geq0$.
%In particular $\deg_M(1_M)=\rk_L(1_M^\circ)\geq0$, with strict inequality if and only if $1_M^\circ\supsetneq0_L$, which means precisely that $L$ is not separated.
Let $0_L=x_0\subsetneq x_1\subsetneq\dots\subsetneq x_N=1_L$ be the canonical filtration of $L$.
Then $x_1^\circ\subset x_0^\circ$, so $\deg_L(x_1)=\rk_L(x_1^\circ)\leq\rk_L(x_0^\circ)=\deg_L(x_0)$, hence $\mu_1\leq0$.

Moreover if $\mu_1=0$, which means $\deg_L(x_1)=\deg_L(x_0)$, then necessarily $x_1^\circ=x_0^\circ=1_M$,
hence $1_M^\circ\supset x_1$, and $\deg_M(1_M)=\rk_L(1_M^\circ)\geq\rk_L(x_1)>0$.
In turn if $\deg_M(1_M)=\rk_L(1_M^\circ)>0$, then $\overline{0}_L=1_M^\circ\supsetneq 0_L$, so $L$ is not separated.
Last, if $L$ is not separated, then $\mu_1\geq\mu(\overline{0}_L/0_L)=0$, so $\mu_1=0$.

This shows the equivalence of the three conditions.

Now if any of them holds, say the first one, then $x_1$ is the largest element of $L$ with $x_1^\circ=1_M$, which means $x_1=1_M^\circ=\overline{0}_L$.
And then the relation between the canonical filtration of $L$ and that of $L/\overline{0}_L$ follows from the first point in Proposition~\ref{colle}.
\end{proof}

\begin{proposition}
\label{symGal}
Under the hypotheses of Theorem~\ref{thGal}, the canonical polygons of $L$ and $M$ are image of each other under the reflection across the diagonal in the first quadrant.

In particular, if the Galois connection is separated, these canonical polygons are defined, respectively,
by functions
\beqv
P_{L,M}:[0,\rk(L)]\to[0,\rk(M)]
\eeqv
and
\beqv
P_{M,L}:[0,\rk(M)]\to[0,\rk(L)],
\eeqv
that are inverse of each other.

More precisely, suppose that the Galois connection is separated, and let $L$ have canonical filtration
\beqv
0_L=x_0\subsetneq x_1\subsetneq\dots\subsetneq x_N=1_L
\eeqv
with slopes
\beqv
\mu_1>\dots>\mu_N.
\eeqv
Then the $x_\alpha$ are closed, the $\mu_\alpha$ are negative,
and $M$ has canonical filtration
\beqv
0_M=x_N^\circ\subsetneq x_{N-1}^\circ\subsetneq\dots\subsetneq x_0^\circ=1_M
\eeqv
with slopes
\beqv
\mu_N^{-1}>\dots>\mu_1^{-1}.
\eeqv
In particular, $L$ is semistable if and only if $M$ is.
\end{proposition}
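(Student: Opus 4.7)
The plan is to establish the proposition in the separated case first, and then handle the general case by stripping off the initial zero-slope segments: by Lemma~\ref{lemsepGal}, in the non-separated case the horizontal initial segment of $L$'s polygon (from $0_L$ to $\overline{0}_L$, at height $\rk(M)$) reflects to the terminal vertical segment of $M$'s polygon, and what remains on both sides corresponds to the separated connection induced on $L/\overline{0}_L$ and $M/\overline{0}_M$. So from now on I assume separation. Then Lemma~\ref{lemsepGal} gives $\mu_1<0$, hence all $\mu_\alpha<0$ and $P_L$ is strictly decreasing.

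The first key step will be to show that every element $x_\alpha$ of the canonical filtration of $L$ is closed. For this I compare $\overline{x_\alpha}$ with $x_\alpha$: by~\eqref{degbar=deg} they have equal degree $P_L(i_\alpha)$, while $\rk_L(\overline{x_\alpha})\geq i_\alpha$. A strict inequality here would, since $P_L$ is strictly decreasing, force $P_L(\rk_L(\overline{x_\alpha}))<P_L(i_\alpha)=\deg_L(\overline{x_\alpha})$, contradicting the definition of $P_L$. Hence $\rk_L(\overline{x_\alpha})=i_\alpha$, and the uniqueness in Theorem~\ref{th_filtr}(i) yields $\overline{x_\alpha}=x_\alpha$.

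Closedness immediately gives $\rk_M(x_\alpha^\circ)=\deg_L(x_\alpha)=P_L(i_\alpha)$ and $\deg_M(x_\alpha^\circ)=\rk_L(x_\alpha^{\circ\circ})=i_\alpha$, so the points $p^M_{x_\alpha^\circ}$ are the reflections of $p^L_{x_\alpha}$ across the diagonal, and they trace the concave curve $P_L^{-1}$. The heart of the argument is then to identify $P_M$ with $P_L^{-1}$. For $P_M\leq P_L^{-1}$ I will apply the Galois identities to an arbitrary $m\in M$: since $m^\circ\in L$ satisfies $P_L(\deg_M(m))\geq\deg_L(m^\circ)=\rk_M(\overline{m})\geq\rk_M(m)$, and $P_L$ is strictly decreasing, this rewrites as $\deg_M(m)\leq P_L^{-1}(\rk_M(m))$. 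For $P_M\geq P_L^{-1}$ I will use that $P_M$ is concave and, by what precedes, attains $i_\alpha$ at $P_L(i_\alpha)$ via $x_\alpha^\circ$; concavity then propagates this domination across each segment between consecutive vertices of $P_L^{-1}$.

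Once $P_M=P_L^{-1}$ is established, the canonical filtration and slopes of $M$ can be read off mechanically: the vertices $(P_L(i_\alpha),i_\alpha)$, listed by increasing first coordinate, correspond to $\alpha=N,N-1,\dots,0$, uniquely realized by $x_\alpha^\circ$ thanks to Theorem~\ref{th_filtr}(i), giving the chain $0_M=x_N^\circ\subsetneq\dots\subsetneq x_0^\circ=1_M$; a direct slope computation on $P_L^{-1}$ between consecutive vertices yields $\mu_\beta(M)=1/\mu_{N-\beta+1}(L)$, hence the sequence $\mu_N^{-1}>\dots>\mu_1^{-1}$. Semistability is then symmetric since $N$ is common to both sides. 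The main obstacle will be the inequality $P_M\geq P_L^{-1}$: the reverse direction is almost formal from the Galois definition, but this one genuinely requires concavity of $P_M$ together with the exact attainment of $P_L^{-1}$ at its vertices by the closed elements $x_\alpha^\circ$.
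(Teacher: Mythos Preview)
Your proposal is correct and follows essentially the same approach as the paper: both reduce to the separated case, prove closedness of the $x_\alpha$ by comparing $x_\alpha$ with $\overline{x_\alpha}$ (same degree, strictly larger rank would contradict the polygon), and then transfer information from $L$ to $M$ via the substitution $l=m^\circ$. The only cosmetic difference is that the paper packages the transfer as a slope-by-slope supporting-line inequality $\Phi_{L,-\mu_\alpha}(l)\leq c_\alpha\ \Rightarrow\ \Phi_{M,-\mu_\alpha^{-1}}(m)\leq c'_\alpha$, whereas you prove the global identity $P_M=P_L^{-1}$ via the two inequalities $P_M\leq P_L^{-1}$ (from $l=m^\circ$) and $P_M\geq P_L^{-1}$ (from concavity and attainment at the $x_\alpha^\circ$); these are equivalent formulations of the same argument.
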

\begin{proof}[Proof (abridged)]
Direct consequence of the symmetry between $(L,\deg_L)$ and $(M,\deg_M)$ in Theorem~\ref{thGal}.
\end{proof}
\begin{proof}[Proof (detailed)]
Thanks to Lemma~\ref{lemsepGal}, the first assertion reduces to the second.
So we suppose the Galois connection is separated (and in particular $0_L=x_0$ is closed). By Lemma~\ref{lemsepGal} again, we deduce $\mu_1<0$, hence all $\mu_\alpha<0$.

For any real $\nu>0$ consider the function $\Phi_{L,\nu}$ on $L$ defined by
\beq
\Phi_{L,\nu}(l)=\nu\rk_L(l)+\deg_L(l),
\eeq
which we might also view as the linear form $(\xi,\eta)\mapsto\nu\xi+\eta$ on the real plane, evaluated at the point $p_l=(\rk_L(l),\deg_L(l))$.

Observe the following \emph{Fact}: if $l$ is not closed, then $\rk_L(\overline{l})>\rk_L(l)$, while $\deg_L(\overline{l})=\deg_L(l)$ by~\eqref{degbar=deg}, hence $\Phi_{L,\nu}(\overline{l})>\Phi_{L,\nu}(l)$.

That $\mu_\alpha$ is the $\alpha$-th  slope of the canonical polygon of $L$ means that there is a certain constant $c_\alpha$ such that all $l\in L$ satisfy
\beq
\label{eq_droite}
\Phi_{L,-\mu_\alpha}(l)\leq c_\alpha
\eeq
%or equivalently
%\beq
%\rk_M(l^\circ)+(-\mu_\alpha)\rk_L(l)\leq c_\alpha,
%\eeq
and that equality is effectively attained for at least two $l$, the smallest of which being $x_{\alpha-1}$, and the largest $x_\alpha$.

Since $\Phi_{L,-\mu_\alpha}(x_\alpha)=c_\alpha$ but $\Phi_{L,-\mu_\alpha}(\overline{x}_\alpha)>c_\alpha$ is forbidden, then from the \emph{Fact} just above
we deduce that $x_\alpha=\overline{x}_\alpha$ is closed.

%Suppose $x_\alpha$ is not closed for some $\alpha\geq1$. Then $\rk_L(\overline{x}_\alpha)>\rk_L(x_\alpha)$, while $\deg_L(\overline{x}_\alpha)=\deg_L(x_\alpha)$ as observed in~\eqref{degbar=deg}.
%Since $\mu_\alpha<0$, and $l=x_\alpha$ reaches equality in~\eqref{eq_droite},
%we deduce $\deg_L(\overline{x}_\alpha)>c_\alpha+\mu_\alpha\rk_L(\overline{x}_\alpha)$,
%\beq
%\begin{split}
%\deg_L(\overline{x}_\alpha)&=\deg_L(x_\alpha)\\
%&=c_\alpha+\mu_\alpha\rk_L(x_\alpha)\\
%&>c_\alpha+\mu_\alpha\rk_L(\overline{x}_\alpha),
%\end{split}
%\eeq
%a contradiction.

%Now take $m\in M$. Then
%\beq
%\deg_M(m)=\rk_L(m^\circ)\leq -\mu_\alpha^{-1}c_\alpha+\mu_\alpha^{-1}\deg_L(m^\circ)
%\eeq
%by~\eqref{eq_droite}.
%Moreover we have
%\beq
%\deg_L(m^\circ)=\rk_M(m^{\circ\circ})\geq\rk_M(m).
%\eeq
Last take $m\in M$, and write~\eqref{eq_droite} in the form
\beq
(-\mu_\alpha)\rk_L(l)+\rk_M(l^\circ)\leq c_\alpha.
\eeq
Setting $l=m^\circ$ we then find
\beq
\rk_L(m^\circ)+(-\mu_\alpha^{-1})\rk_M(m^{\circ\circ})\leq(-\mu_\alpha^{-1})c_\alpha,
\eeq
and since $\rk_M(m)\leq\rk_M(m^{\circ\circ})$ we deduce
\beq
\label{eq_droite'}
\Phi_{M,-\mu_\alpha^{-1}}(m)\leq c'_\alpha
\eeq
with $c'_\alpha=(-\mu_\alpha^{-1})c_\alpha$.
Moreover a necessary and sufficient condition for equality in~\eqref{eq_droite'} is that $l=m^\circ$ reaches equality in~\eqref{eq_droite} and $m=m^{\circ\circ}$ is closed.
Thus we see that equality is attained for at least two $m$, the smallest of which being $x_\alpha^\circ$, and the largest $x_{\alpha-1}^\circ$.
This means precisely that $\mu_\alpha^{-1}$ is a slope of $M$, and $x_\alpha^\circ$ and $x_{\alpha-1}^\circ$ the corresponding successive elements of its canonical filtration.
\end{proof}
%The more graphically minded reader has certainly already noticed that the result just proved can be summarized as follows:
%the canonical polygons of $L$ and $M$ are image of each other under the reflection across the diagonal.

\begin{remark}
\label{Galoismatroids}
In the proof of Theorem~\ref{thGal}, we observe that \eqref{preuvesousmodularite} still works if $\rk_M$ is only assumed lower semimodular, instead of modular. We deduce:

\emph{
Let $(.)^\circ$ be a Galois connection between a modular lattice $L$ of finite length and a lower semimodular lattice $M$.
Then the degree function
\beq
\label{degGaloisasym}
\deg_L(l)=\rk_M(l^\circ)
\eeq
makes $L$ into a Harder-Narasimhan lattice.
}

Now let $E$ be a finite matroid, and let $L_{E,\textrm{fl}}$ be its lattice of flats \cite[\S1.7]{Oxley}.
It is known that $L_{E,\textrm{fl}}$ is upper semimodular, so the opposite lattice $L_{E,\textrm{fl}}^{\textrm{opp}}$ is lower semimodular.
Let then $L_E$ be the (modular) lattice of all subsets of $E$. We define a Galois connection between $L_E$ and $L_{E,\textrm{fl}}^{\textrm{opp}}$ as follows:
in one direction, we map any $X\in L_E$ to its closure (or span) in $L_{E,\textrm{fl}}^{\textrm{opp}}$;
in the other direction, we just use the forgetful map, \ie the natural inclusion $L_{E,\textrm{fl}}^{\textrm{opp}}\inj L_E$.
Observe these maps are order reversing, by definition of the opposite lattice.

It is then easily seen that, under this Galois connection, the degree function on $L_E$ given by \eqref{degGaloisasym} coincides with \eqref{degmatroid} in Example~\ref{exmatroid}.
\end{remark}

%%%%%%%%%%%%%%%%%%%%%%%%%%%%%%
%%%%%%%% Linear codes %%%%%%%%
%%%%%%%%%%%%%%%%%%%%%%%%%%%%%%

\section{Linear codes}

From now on we will use the following notations:
\begin{itemize}
\item $n\geq1$ is an integer;
\item $[n]=\{1,\dots,n\}$ is the standard set with $n$ elements
\item $\F$ is a field;
\item $\abs{.}$ is the trivial absolute value on $\F$, so $\abs{0}=0$ and $\abs{x}=1$ for $x\in\F^\times$;
\item $\norme{.}$ is the corresponding $L^1$ norm on the standard $\F$-vector space $\F^n$, also called the Hamming norm.

It follows that for $\mathbf{x}=(x_1,\dots,x_n)\in\F^n$ we have
\beq
\norme{\mathbf{x}}=\abs{x_1}+\cdots+\abs{x_n}=\#\Supp(\mathbf{x})
\eeq
where $\Supp(\mathbf{x})=\{i\in[n]:\,x_i\neq0\}$ is the support of $\mathbf{x}$.
\end{itemize}

%Let $\F$ be a field.
%Equip $\F$ with the trivial absolute value $\abs{.}$, so $\abs{0}=0$ and $\abs{x}=1$ for $x\in\F^\times$;
%and for any integer $n\geq1$ equip the standard $\F$-vector space $\F^n$ with the corresponding $L^1$ norm $\norme{.}$, sometimes also called the Hamming norm, so for $\mathbf{x}=(x_1,\dots,x_n)\in\F^n$ we have
%\beq
%\norme{\mathbf{x}}=\abs{x_1}+\cdots+\abs{x_n}=\#\Supp(\mathbf{x})
%\eeq
%where $\Supp(\mathbf{x})=\{i\in[n];\,x_i\neq0\}$ is the support of $\mathbf{x}$.

Now we recall that a (linear) code of length $n$ and dimension $k$ over $\F$, sometimes called a $[n,k]$-code, is a $k$-dimensional vector subspace
\beq
C\subset\F^n
\eeq
equipped with the induced norm.
Beside its length and dimension, another important parameter of a code is its minimal distance
\beq
\dmin(C)=\lambda_1(C)=\min_{\mathbf{c}\in C\moins\{\mathbf{0}\}}\norme{\mathbf{c}}.
\eeq
A $[n,k]$-code with minimum distance $d$ is also called a $[n,k,d]$-code.

The definition of a code we just gave was oriented in a way to stress some similarity with certain branches of arithmetic geometry (\eg Arakelov, Berkovich, or tropical geometry),
where metrics are introduced in order to reflect a certain integral structure in the objects considered.
However, this analogy is certainly not perfect.
For instance, arithmeticians tend to favor metrics that satisfy the ultrametric inequality.
We observe that in our case, although the trivial absolute value $\abs{.}$ is ultrametric, the Hamming norm $\norme{.}$, as a $L^1$ norm, is \emph{not}.

\vspace{.5\baselineskip}
\mysubsection{First approach: the sphere-packing analogy}

Many deep connections exist between the theory of codes and that of euclidean lattices.
Classical references on this topic are \cite{CS}\cite{Ebeling}.
Perhaps the similarity between these objects becomes the most striking when both are viewed as defining regular packings:
of Hamming spheres in $\F^n$ for codes, and of euclidean spheres in $\R^n$ for lattices.
In both cases, a central problem is then that of the determination of the densest such packings.
However, many other notions and problems, either purely mathematical or more algorithmic, can be formulated in a very similar way in both contexts, and are equally interesting.
Let us just mention Minkowski's successive minima, which generalize the notion of the minimum distance of a lattice,
and whose analogues, the successive minima of our $[n,k]$-code $C$, can be defined as
\beq
\lambda_i(C)=\min\{t>0:\,\dim_\F\linspan{C\cap\mathbf{B}_{\norme{.}}(\mathbf{0},t)}\geq i\}
\eeq
for $1\leq i\leq k$.
On the other hand, as discussed in \cite{Borek2005}\cite{BC2013}\cite{BK2010}, slopes of euclidean lattices are known to be closely related to successive minima, while enjoying better functorial properties.
Our aim here will be to find the analogue of this theory for codes.
That is, we want to define a suitable degree function that makes the lattice $L_C$ of linear subspaces of $C$ into a Harder-Narasimhan lattice, in a way as close as possible to Example~\ref{exSG}.

In Example~\ref{exSG}, the degree of $F\subset E$ is an elementary function of its covolume $\covol(F)$.
When $F$ has rank $1$, this covolume is just the norm of a generating vector.
Now for a subcode $C'\subset C$ of dimension $1$, all generating vectors have the same norm, which is also the cardinality of their common support.
This suggests more generally that the analogue of the covolume for an arbitrary subcode $C'\subset C$ should be the cardinality of its support
\beq
\Supp(C')=\bigcup_{\mathbf{c}\in C'}\Supp(c)=\{i\in[n]:\,\exists\mathbf{c}=(c_1,\dots,c_n)\in C',\,c_i\neq0\}.
\eeq
And then the degree should be an elementary function of this support size.

\begin{definition}
\label{def_deg_C'}
Let $C\subset\F^n$ be a $[n,k]$-code.
We define the degree of a linear subcode $C'\subset C$ by the formula
\beq
\label{formule_def_deg_C'}
\deg(C')=n-w(C')
\eeq
where $w(C')=\#\Supp(C')$ is its support size.
\end{definition}
In fact, since degree functions differing only by an additive constant define the same slopes (Lemma~\ref{modif_affine}), we could equally well have used $-w(C')$ in Definition~\ref{def_deg_C'} instead of $n-w(C')$.
Still, for reasons that will appear later, we will keep this choice, even if a peculiar consequence is that the zero subcode then has degree $\deg(0)=n$.

\begin{proposition}
This degree function $\deg:L_C\longto\{0,\dots,n\}$ is lower semimodular,
so it makes $L_C$ into a Harder-Narasimhan lattice.
\end{proposition}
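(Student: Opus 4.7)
The plan is to unpack what lower semimodularity means for $\deg$ and reduce it to a combinatorial statement about support sets. Substituting \eqref{formule_def_deg_C'} into \eqref{semimodularity}, and recalling that in the lattice $L_C$ of subspaces the meet and join are $C_1\meet C_2 = C_1\cap C_2$ and $C_1\join C_2 = C_1+C_2$, the required inequality
\beqv
\deg(C_1)+\deg(C_2)\leq\deg(C_1+C_2)+\deg(C_1\cap C_2)
\eeqv
is equivalent to the \emph{upper} semimodularity of the support-size function:
\beqv
w(C_1+C_2)+w(C_1\cap C_2)\leq w(C_1)+w(C_2).
\eeqv

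I would then establish two elementary facts about supports of subcodes. First, the support of a sum is exactly the union of supports, $\Supp(C_1+C_2)=\Supp(C_1)\cup\Supp(C_2)$, since a coordinate is hit by some vector of $C_1+C_2$ if and only if it is hit in $C_1$ or in $C_2$. Second, the support of an intersection is contained in the intersection of supports, $\Supp(C_1\cap C_2)\subset\Supp(C_1)\cap\Supp(C_2)$, because any codeword in $C_1\cap C_2$ lies in both $C_1$ and $C_2$. Combining these with the inclusion-exclusion identity $\#(A\cup B)+\#(A\cap B)=\#A+\#B$ applied to $A=\Supp(C_1)$ and $B=\Supp(C_2)$ yields the desired inequality.

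It remains to check the other hypotheses of Definition~\ref{def_HN_lattice}. The lattice $L_C$ is modular, being a sublattice of the lattice of vector subspaces of $\F^n$ (a standard fact, e.g.\ \cite[\S I.7]{Birkhoff}), and of finite length since $C$ is finite-dimensional. The degree function is bounded above by $n=\deg(0)$, since $w(C')\geq0$. This completes all the verifications.

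The only nonroutine step is the inequality $\Supp(C_1\cap C_2)\subset\Supp(C_1)\cap\Supp(C_2)$, which can be strict (two distinct $1$-dimensional subcodes with the same support satisfy $C_1\cap C_2=0$). The slack in this inclusion is precisely what prevents $\deg$ from being modular, reflecting the genuinely non-trivial Harder-Narasimhan theory one obtains for codes.
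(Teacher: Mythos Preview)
Your proof is correct and follows essentially the same approach as the paper: both reduce to the two support facts $\Supp(C_1+C_2)=\Supp(C_1)\cup\Supp(C_2)$ and $\Supp(C_1\cap C_2)\subset\Supp(C_1)\cap\Supp(C_2)$, then conclude upper semimodularity of $w$ via inclusion--exclusion. You add the explicit verification of the remaining hypotheses of Definition~\ref{def_HN_lattice} (modularity and finite length of $L_C$, boundedness of $\deg$), which the paper leaves implicit, and your closing remark on the strictness of the inclusion is a nice observation not present in the original.
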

\begin{proof}
Observe that for linear subcodes $C',C''\subset C$ we have
\beq
\begin{aligned}
\Supp(C'+C'')&=\Supp(C')\cup\Supp(C'')\\
\Supp(C'\cap C'')&\subset \Supp(C')\cap\Supp(C'').
\end{aligned}
\eeq
It follows that $C'\mapsto w(C')$ is upper semimodular, and we conclude.
\end{proof}

We define the \emph{effective rate} of a nonzero subcode $C'\subset C$ as
\beq
\ratef(C')=\frac{\dim_\F(C')}{w(C')}.
\eeq
%Observe that if $C$ itself has full support, \ie if $\Supp(C)=[n]$, then $\ratef(C)=k/n=\rate(C)$ is its rate in the usual sense.
Observe that if $C$ itself has full support, \ie if $\Supp(C)=[n]$, then $\ratef(C)=k/n$ is its rate in the usual sense. In fact coordinates outside $\Supp(C)$ play no role, so we can always reduce to this case.

\begin{proposition}
\label{slope=-rate-1}
The slope of a nonzero $C'\subset C$ is
\beq
\mu(C'/0)=-\ratef(C')^{-1}.
\eeq
Consequently, $C$ is semistable (resp. stable) iff for all $0\subsetneq C'\subsetneq C$ we have
\beq
\label{ratestable}
\ratef(C')\leq\ratef(C)\quad\text{(resp. $\ratef(C')<\ratef(C)$).}
\eeq
\end{proposition}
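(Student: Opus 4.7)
The proof is essentially a direct computation unpacking the definitions, so the plan is short.

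The plan is to first compute the slope from the definition. By Definition~\ref{def_deg_C'} we have $\deg(0) = n - w(0) = n$ (since the zero subcode has empty support) and $\deg(C') = n - w(C')$. Since the rank in $L_C$ is the $\F$-dimension, $\rk(C') - \rk(0) = \dim_\F(C')$. Thus
\beqv
\mu(C'/0) = \frac{\deg(C') - \deg(0)}{\dim_\F(C')} = \frac{-w(C')}{\dim_\F(C')} = -\ratef(C')^{-1},
\eeqv
which proves the first formula.

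Next I would apply this to $C' = C$ (assuming $C \ne 0$) to get $\mu(C) = \mu(1_{L_C}/0_{L_C}) = -\ratef(C)^{-1}$, and then invoke the definition of semistability: $C$ is semistable iff every $0 \subsetneq C' \subsetneq C$ satisfies $\mu(C'/0) \leq \mu(C)$, i.e.\ $-\ratef(C')^{-1} \leq -\ratef(C)^{-1}$. Since both rates are positive reals, this is equivalent to $\ratef(C')^{-1} \geq \ratef(C)^{-1}$, hence to $\ratef(C') \leq \ratef(C)$. The stable case is identical with strict inequalities.

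There is no genuine obstacle here: the only mild subtlety is the sign in the convention $\deg(C') = n - w(C')$ (with the zero subcode assigned degree $n$), which at first glance looks unnatural but, as already noted after Definition~\ref{def_deg_C'}, does not affect slopes by Lemma~\ref{modif_affine}. One could equally well use $-w(C')$ and get the same formula $\mu(C'/0) = -\ratef(C')^{-1}$.
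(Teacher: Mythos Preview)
Your proposal is correct and is exactly the unpacking of the definitions that the paper's one-line proof (``Direct from the definitions'') leaves to the reader. There is nothing to add or change.
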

\begin{proof}
Direct from the definitions.
\end{proof}
Sometimes it is useful to reformulate \eqref{ratestable} as follows: $C$ is semistable iff all $C'$ satisfy
\beq
\label{poidsstable}
w(C')\geq\frac{\dim(C')}{R(C)},
\eeq
and similarly with $>$ for stability. In particular, the  minimum distance of a stable code satisfies the (admittedly unimpressive) lower bound
\beq
\dmin(C)>\frac{1}{R(C)}.
\eeq

\begin{remark}
In Example~\ref{exSG}, take $F\subset E$ and set $l=\rk(F)$.
Then we have $\covol(F)=\norme{v_1\wedge\dots\wedge v_l}$ where $v_1,\dots,v_l$ are generators of $F$ and $\norme{.}$ is the $l$-th alternate power of the norm of $E$.
Back to codes, we have a natural identification $\bigwedge^l\F^n=\F^{\binom{n}{l}}$, and we can define the $l$-th alternate power of the Hamming norm of $\F^n$ as the Hamming norm of $\F^{\binom{n}{l}}$.
So given $C'\subset C\subset\F^n$ of dimension $\dim(C')=l$, we get a line $\bigwedge^lC'\subset\F^{\binom{n}{l}}$, and the norm of a generator of this line would be another convincing analogue of the notion of covolume.
In fact, this norm is easily seen to be the number of \emph{information sets} of $C'$, so it is at most $\binom{w(C')}{l}$. However, inequality could be strict.
This raises two natural questions: can one use this construction to define another semimodular degree function, hence another Harder-Narasimhan structure on $L_C$?
if so, how does it compare with the one deduced from Definition~\ref{def_deg_C'}?
\end{remark}

\vspace{.5\baselineskip}
\mysubsection{Second approach: the geometric view}

Roughly speaking, the geometric view on coding theory, developed by the Russian school, aims at seeing any code as an evaluation code.
A standard reference on the topic is \cite{TV}.

%Let $C\subset\F^n$ be a $[n,k]$-code, and let $\mathbf{G}\in\F^{k\times n}$ be a generator matrix of $C$, which means a matrix whose rows form a basis of $C$.
Let $C\subset\F^n$ be a $[n,k]$-code.
We can view $C$ as an abstract $k$-dimensional vector space equipped with $n$ linear forms $\pi_1,\dots,\pi_n$, where $\pi_i:C\to\F$ is the $i$-th coordinate projection.
It is easily seen that $\pi_1,\dots,\pi_n$ span the dual vector space $C^\vee$.
Now, to make things simpler, we shall assume that $C$ has dual distance at least $3$, which means that these $\pi_i$ are nonzero and pairwise nonproportional.
As a consequence they define $n$ distinct points $\overline{\pi}_1,\dots,\overline{\pi}_n$ in the projective space $\PP(C)=\Proj S^\cdot C\simeq\PP^{k-1}$ that parameterizes hyperplanes in $C$ (or equivalently, lines in $C^\vee$).
One can then show that the set of points
\beq
\Pi=\{\overline{\pi}_1,\dots,\overline{\pi}_n\}\subset\PP(C)
\eeq
uniquely determines $C$ up to linear code equivalence, \ie up to the natural action of $\mathfrak{S}_n\ltimes(\F^\times)^n$, the group of linear isometries of $\F^n$.
This might be thought as a reformulation of the celebrated MacWilliams extension theorem, a proof of which along these lines can be found in \cite[Prop.~3.1]{Assmus1998}.

In case $C$ only has dual distance $2$ (\ie $C$ has full support, so the $\pi_i$ still are nonzero, but some of them can be proportional), then things work the same provided we consider $\Pi$ as a multiset, counting the now possibly nondistinct $\overline{\pi}_i$ according to their multiplicity.

Finite configurations of points in a projective space can be classified thanks to geometric invariant theory (GIT),
which provides in particular its own notion of (semi)stability.
Recall (see \eg \cite[Prop.~3.4]{MFK}, \cite[Ch.~II, Th.~1]{DO}, or \cite[Th.~11.2]{Dolgachev} with all $k_i=1$):

\emph{The configuration of points $\Pi\subset\PP(C)$ is semistable (resp. stable) if and only if for all linear subspaces $\emptyset\neq V\subsetneq\PP(C)$ we have}
\beq
\label{GIT}
\frac{\#(\Pi\cap V)}{\dim(V)+1}\leq\frac{n}{k}\qquad\text{(resp. $<$)}.
\eeq

This could be extended in order to fit into our framework:
\begin{proposition}
\label{HNgeom}
Let $L_{\PP(C)}$ be the lattice of all linear subspaces $V\subset\PP(C)$, including $V=\emptyset$ considered as the unique linear subspace of dimension $-1$. Then:
\begin{enumerate}
\item This lattice $L_{\PP(C)}$ is modular, with rank function $\rk(V)=\dim(V)+1$.
\item The degree function $\deg_\Pi(V)=\#(\Pi\cap V)$ is lower semimodular, hence makes $L_{\PP(C)}$ into a Harder-Narasimhan lattice.
\item Then $(L_{\PP(C)},\deg_\Pi)$ is semistable (resp. stable) in our sense, if and only if $\Pi\subset\PP(C)$ is semistable (resp. stable) in the sense of GIT~\eqref{GIT}.
\end{enumerate}
\end{proposition}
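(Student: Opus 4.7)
The plan is to verify the three claims essentially by unpacking definitions, with only a little bookkeeping.

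For (1), I would note that linear subspaces of $\PP(C)$, including the empty set as the $(-1)$-dimensional subspace, are in rank-preserving bijection with linear subspaces of the $k$-dimensional $\F$-vector space parameterizing $\PP(C)$ (for example, $C^\vee$): namely $V \mapsto \tilde V$, the cone on $V$, with the dictionary $V \wedge W = V \cap W$ and $V \vee W$ the projective span of $V \cup W$ corresponding to $\tilde V \cap \tilde W$ and $\tilde V + \tilde W$ respectively. Since the lattice of subspaces of a finite-dimensional vector space is modular with modular rank function $\dim$, transporting back gives modularity of $L_{\PP(C)}$ with $\rk(V) = \dim \tilde V = \dim(V) + 1$.

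For (2), I would argue combinatorially, treating $\Pi$ as a multiset indexed by $[n]$ (the indices $i$ such that $\pi_i \neq 0$). For $V,W \in L_{\PP(C)}$, set $A = \{i : \overline\pi_i \in V\}$ and $B = \{i : \overline\pi_i \in W\}$, so $\deg_\Pi(V) = \#A$ and $\deg_\Pi(W) = \#B$. The key observations are the exact identity $A \cap B = \{i : \overline\pi_i \in V \cap W\} = \{i : \overline\pi_i \in V \wedge W\}$ and the inclusion $A \cup B \subset \{i : \overline\pi_i \in V \vee W\}$, the latter because the projective span contains each $\overline\pi_i$ lying in either $V$ or $W$. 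Combined with $\#A + \#B = \#(A \cup B) + \#(A \cap B)$, this yields
\beqv
\deg_\Pi(V) + \deg_\Pi(W) \leq \deg_\Pi(V \vee W) + \deg_\Pi(V \wedge W),
\eeqv
which is lower semimodularity. Since $L_{\PP(C)}$ is modular of finite length and $\deg_\Pi$ is bounded above by $n$, Definition~\ref{def_HN_lattice} applies.

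For (3), I would simply compute slopes. With $0_L = \emptyset$ and $1_L = \PP(C)$ we have $\rk(0_L) = \deg_\Pi(0_L) = 0$, $\rk(1_L) = k$ and $\deg_\Pi(1_L) = n$, so $\mu(L_{\PP(C)}) = n/k$. For any $V \in L_{\PP(C)}$ with $V \neq \emptyset, \PP(C)$,
\beqv
\mu(V/0_L) = \frac{\#(\Pi \cap V)}{\dim(V) + 1}.
\eeqv
Hence our condition $\mu(V/0_L) \leq \mu(L_{\PP(C)})$ (resp.\ $<$) for all such $V$ is literally the GIT inequality~\eqref{GIT}. Since the ranges of $V$ in the two criteria agree, this gives the equivalence in both the semistable and stable cases.

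There is no serious obstacle: essentially everything reduces to the elementary inclusion $A \cup B \subset \{i : \overline\pi_i \in V \vee W\}$ in step (2) and matching the defining inequality of slopes with~\eqref{GIT} in step (3). The only mild care needed is to handle $\Pi$ as a multiset (so that the argument covers codes of dual distance~$2$), and to remember the shift $\rk(V) = \dim(V) + 1$ that makes $\mu(L_{\PP(C)}) = n/k$ come out correctly.
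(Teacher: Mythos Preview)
Your proposal is correct and matches the paper's approach: the paper's proof reads simply ``Routine verification,'' and what you have written is precisely that verification carried out in detail. The three ingredients you isolate---identifying $L_{\PP(C)}$ with the subspace lattice of a vector space for (1), the inclusion $A\cup B\subset\{i:\overline\pi_i\in V\vee W\}$ for (2), and matching $\mu(V/0_L)=\#(\Pi\cap V)/(\dim(V)+1)$ with the GIT inequality for (3)---are exactly the points one checks.
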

\begin{proof}
Routine verification.
\end{proof}

We observe that, in turn, this construction also provides a Harder-Narasimhan structure on the lattice $L_C$ of linear subcodes of $C$.
Indeed, since we defined $\PP(C)$ as the projective space of hyperplanes in $C$, there is a natural identification
\beq
L_{\PP(C)}=(L_C)^{\textrm{opp}}.
\eeq
We then have only to apply Lemma~\ref{modif_affine}.

\vspace{.5\baselineskip}
\mysubsection{Third approach: the cosupport Galois connection}

Let $C\subset\F^n$ be a $[n,k]$-code.
We construct a Galois connection between the lattice $L_C$ of subcodes of $C$ and the lattice $L_{[n]}$ of subsets of $[n]$, as follows.

\begin{definition}
\label{cosupport_connection}
Given a subcode $C'\subset C$ we set
\beq
\label{cosupport_connection_C}
(C')^\circ=\Cosupp(C')=[n]\moins\Supp(C')
\eeq
the cosupport of $C'$, \ie the set of coordinates on which $C'$ is identically $0$.

Given a subset $J\subset [n]$ we set
\beq
\label{cosupport_connection_n}
J^\circ=C_{[n]\moins J}=C\cap\F^{[n]\moins J}
\eeq
the subcode made of all the codewords of support disjoint from $J$, \ie the largest subcode vanishing identically over $J$.
\end{definition}
It is easily seen that $(.)^\circ:L_C\to L_{[n]}$ and $(.)^\circ:L_{[n]}\to L_C$ indeed define a Galois connection, which we will call the \emph{cosupport Galois connection}.

From this we get a Harder-Narasimhan structure on $L_C$ (and also on $L_{[n]}$) thanks to Theorem~\ref{thGal},
on which Proposition~\ref{symGal} applies.
Separation is treated by the following:

\begin{lemma}
The lattice $L_C$ is always separated under the cosupport Galois connection.
On the other hand, the lattice $L_{[n]}$, hence also the cosupport Galois connection itself, is separated if and only if $C$ has full support.
\end{lemma}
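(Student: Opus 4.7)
The plan is to directly compute the closure of the bottom element in each of the two lattices, since by definition separation of $L$ means $0_L = \overline{0}_L$, equivalently $0_L = 1_M^\circ$.

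For $L_C$, I would just unwind the definitions: the bottom is the zero subcode $0 \subset C$. Using \eqref{cosupport_connection_C}, I have $0^\circ = \Cosupp(0) = [n] \setminus \Supp(0) = [n]$, so I need to evaluate $[n]^\circ$ using \eqref{cosupport_connection_n}: this is $C \cap \F^{[n]\setminus [n]} = C \cap \{\mathbf{0}\} = \{\mathbf{0}\}$. Therefore $\overline{0} = 0$ in $L_C$, unconditionally, so $L_C$ is always separated.

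For $L_{[n]}$, the bottom element is $\emptyset$, and the top element of $L_C$ is $C$. I just have to check whether $C^\circ = \emptyset$. By \eqref{cosupport_connection_C} again, $C^\circ = \Cosupp(C) = [n] \setminus \Supp(C)$. This is empty if and only if $\Supp(C) = [n]$, i.e., $C$ has full support.

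Finally, recalling that the Galois connection itself is declared separated precisely when both $L_C$ and $L_{[n]}$ are, the first part of the lemma reduces the joint condition to the $L_{[n]}$ condition alone, giving the same criterion: $C$ has full support. No obstacle is expected; the whole argument is a direct unwinding of Definition~\ref{cosupport_connection} together with the definition of separation.
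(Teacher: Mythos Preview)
Your proposal is correct and follows essentially the same route as the paper: the paper computes $0^\circ=[n]$ hence $\overline{0}=[n]^\circ=0$, and $\emptyset^\circ=C$ hence $\overline{\emptyset}=[n]\setminus\Supp(C)$, which is exactly your computation (you phrase the second part via $1_{L_C}^\circ=C^\circ$, but this is the same thing since $\overline{\emptyset}=(\emptyset^\circ)^\circ=C^\circ$).
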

\begin{proof}
The zero subcode satisfies $0^\circ=[n]$ hence $\overline{0}=[n]^\circ=0$.
On the other hand we have $\emptyset^\circ=C$ hence $\overline{\emptyset}=[n]\moins\Supp(C)$.
\end{proof}

As follows from \eqref{cosupport_connection_n}, the Harder-Narasimhan structure just constructed on $L_{[n]}$
is given by the degree function
\beq
\label{deg_n}
\deg(J)=\dim(C_{[n]\moins J}).
\eeq
Actually this can be related to another classical construction.
We can view the inclusion $C\subset\F^n$ as a length $1$ filtration on the vector space $\F^n$,
from which we get (see \eg \cite[Ex.~1]{Chen2010}, or \cite{FW1994}\cite{Totaro1994} for the original works) a degree function $\deg(V)=\dim(C\cap V)$ for $V$ in the lattice $L_{\F^n}$ of all vector subspaces of $\F^n$.
Now mapping $J\subset[n]$ to $\F^J\subset\F^n$ embeds $L_{[n]}$ as a sublattice of $L_{\F^n}$, so this degree function pulls back by Lemma~\ref{pullback}.
Last the involution $J\mapsto[n]\moins J$ identifies $L_{[n]}$ and $(L_{[n]})^{\textrm{opp}}$; applying Lemma~\ref{modif_affine} we retrieve~\eqref{deg_n}.

We mention yet another interpretation, in terms of matroids: if $[n]$ is viewed as the index set of the columns of a given generating matrix $G$ of $C$,
then \eqref{deg_n} coincides with the degree function \eqref{degmatroid} of the corresponding matroid.
Indeed, the row-span of the submatrix $G_J$ of columns of $G$ indexed by $J$ is $\pi_J(C)$, where $\pi_J:\F^n\to\F^J$ is the natural projection,
so $r(J)=\rk(G_J)=\dim\pi_J(C)$, and
\beq
\label{k-r(J)}
k-r(J)=\dim\ker(\pi_J|_C)=\dim(C_{[n]\moins J})
\eeq
as claimed.

\vspace{.5\baselineskip}
\mysubsection{Discussion}

%First of all, we compare the three Harder-Narasimhan structures introduced just above.
We have just introduced three Harder-Narasimhan structures on the lattice $L_C$ of linear subcodes of $C$:
\begin{itemize}
\item one given by Definition~\ref{def_deg_C'}
\item one deduced from Proposition~\ref{HNgeom}, through the natural identification $L_C=(L_{\PP(C)})^{\textrm{opp}}$ and Lemma~\ref{modif_affine}
\item one deduced from the cosupport Galois connection of Definition~\ref{cosupport_connection}, through Theorem~\ref{thGal}.
\end{itemize}

\begin{proposition}
These three Harder-Narasimhan structures on $L_C$ coincide.
\end{proposition}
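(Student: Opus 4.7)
The plan is to show that, on any subcode $C'\subset C$, the three constructions in fact produce \emph{literally the same} degree function, namely $\deg(C')=n-w(C')$ from Definition~\ref{def_deg_C'}. Since semimodularity, rank, and the HN structure are then identical, the three constructions must coincide.

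For the third approach, the equality is essentially immediate. By Definition~\ref{cosupport_connection}, the cosupport Galois connection sends $C'$ to $(C')^\circ=\Cosupp(C')=[n]\moins\Supp(C')$. The rank of any subset $J\subset[n]$ in the Boolean lattice $L_{[n]}$ is just its cardinality, so Theorem~\ref{thGal} gives
\beqv
\deg_L(C')\;=\;\rk_{L_{[n]}}((C')^\circ)\;=\;\#\Cosupp(C')\;=\;n-w(C'),
\eeqv
which agrees with \eqref{formule_def_deg_C'}.

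For the second approach, I would first spell out the bijection $L_C\simeq (L_{\PP(C)})^{\textrm{opp}}$. Under $\PP(C)=\Proj S^\cdot C$, a subcode $C'\subset C$ of dimension $k'$ corresponds to the linear subspace $V(C')\subset\PP(C)$ parametrising hyperplanes of $C$ that contain $C'$; this has $\PP(C)$-dimension $k-k'-1$, so $\rk_{L_{\PP(C)}}(V(C'))=k-k'$. Passing to the opposite lattice via Lemma~\ref{modif_affine}(i) sends this rank to $k-(k-k')=k'=\dim(C')$, which matches the intrinsic rank on $L_C$, while preserving the degree $\deg_\Pi$. The key observation is that $\overline{\pi}_i\in V(C')$ if and only if the hyperplane $\ker\pi_i$ contains $C'$, i.e.\ $\pi_i$ vanishes on $C'$, i.e.\ $i\in\Cosupp(C')$. (In the dual-distance-$2$ case this remains true as an equality of multisets, since proportional $\pi_i$'s share the same vanishing locus.) Therefore
\beqv
\deg_\Pi(V(C'))\;=\;\#(\Pi\cap V(C'))\;=\;\#\Cosupp(C')\;=\;n-w(C'),
\eeqv
as required.

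The main (and only) obstacle is the routine but easy-to-mishandle verification of the rank and opposite-lattice conventions in the second approach, so that $\rk_{L_{\PP(C)}}(V)=\dim(V)+1$ after the $(\xi,\eta)\mapsto(\rk(L)-\xi,\eta)$ transformation of Lemma~\ref{modif_affine}(i) really does reproduce $\dim(C')$, and that the degree function transfers across $L_C=(L_{\PP(C)})^{\textrm{opp}}$ unchanged. Once this bookkeeping is done, the three degree functions are pointwise equal, hence define a single Harder-Narasimhan structure.
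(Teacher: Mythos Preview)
Your proof is correct and follows essentially the same approach as the paper: showing that all three degree functions evaluate to $n-w(C')$ on a subcode $C'$. The only difference is that you spell out directly why $\overline{\pi}_i\in V(C')\iff i\in\Cosupp(C')$ (and verify the rank bookkeeping), whereas the paper simply cites \cite[Th.~2.1]{TV1995} for the identity $\#(\Pi\cap V)=n-w(C')$.
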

\begin{proof}
Keep all the notations introduced before.
It is well known (see \eg the proof of \cite[Th.~2.1]{TV1995}) that if $V\subset\PP(C)$ is a $l$-codimensional linear subspace,
corresponding to a $l$-dimensional subcode $C'\subset C$, then $w(C')=n-\#(\Pi\cap V)$, or
\beq
\#(\Pi\cap V)=n-w(C').
\eeq
This gives equality of the first and second Harder-Narasimhan structures.

On the other hand we have $(C')^\circ=[n]\moins\Supp(C')$ hence
\beq
\rk_{L_{[n]}}((C')^\circ)=\#((C')^\circ)=n-w(C'),
\eeq
which gives equality of the first and the third.
\end{proof}

%We recall that the construction of the first and second of these Harder-Narasimhan structures on codes was motivated by relations with other mathematical objects that could
Relationships between the various mathematical objects encountered in this work
can be summarized by the following diagram:
\begin{center}
\begin{tikzpicture}
%\node[draw,text width=2.5cm,text centered,minimum width=3cm,minimum height=1.5cm] (A) at (-4.4cm,-3cm) {Vector bundles on curves (Example~\ref{exHN})};
%\node[draw,text width=3cm,text centered,minimum width=3cm,minimum height=1.5cm] (B) at (-3.2cm,0cm) {euclidean or hermitian lattices (Example~\ref{exSG})};
%\node[draw,text width=3cm,text centered,minimum width=3cm,minimum height=1.5cm] (C) at (3.2cm,0cm) {Linear codes};
%\node[draw,text width=3cm,text centered,minimum width=3cm,minimum height=1.5cm] (D) at (4.4cm,-3cm) {Configurations of points in projective spaces};
\node[draw,text width=2.7cm,text centered,minimum width=2.7cm,minimum height=1.5cm] (A) at (-4.5cm,-3cm) {vector bundles on curves};
\node[draw,text width=2.7cm,text centered,minimum width=2.7cm,minimum height=1.5cm] (B) at (-2.8cm,0cm) {euclidean or hermitian lattices};
\node[draw,text width=2.7cm,text centered,minimum width=2.7cm,minimum height=1.5cm] (C) at (2.8cm,0cm) {linear codes};
\node[draw,text width=2.7cm,text centered,minimum width=2.7cm,minimum height=1.5cm] (D) at (4.5cm,-3cm) {configurations of points in projective spaces};
\draw[<->,>=latex,thick] (A) -- (B) node[midway,right,text width=3cm,text centered]{analogy between number fields and function fields};
\draw[<->,>=latex,thick] (B) -- (C) node[midway,above,text width=3cm,text centered]{sphere-packing analogy};
\draw[<->,>=latex,thick] (C) -- (D) node[midway,left,text width=2.8cm,text centered]{geometric view on coding theory};
\draw[shorten <=2.5mm,shorten >=2.5mm,loosely dashed,thick] (A) -- (D);
\end{tikzpicture}
\end{center}
%The two extremal objects in this diagram are purely geometric, and admit natural Harder-Narasimhan structures given by GIT.
The two extremal objects in this diagram belong to the realm of classical algebraic geometry, and can be classified using GIT.
It is quite remarkable that the associated Harder-Narasimhan structures, and in particular the notions of slopes, canonical filtration, and semistability, are compatible with this chain of analogies.

\begin{remark}[thanks to a suggestion of Bost]
\label{remBost}
In the discussion following \eqref{deg_n} we saw how the degree function on the Galois dual lattice $L_{[n]}$ could be deduced from the formalism of (multi-)filtered vector spaces of Faltings-W\"ustholz and Totaro,
applied to the length $1$ filtration $C\subset\F^n$.
It turns out this theory can also be applied directly to the lattice $L_C$:
consider the collection of length $1$ filtrations $H_i\subset C$, for $1\leq i\leq n$, where $H_i=\ker\pi_i|_C$ is the $i$-th coordinate hyperplane.
This makes $C$ into a multi-filtered vector space, with associated degree function
\beqv
\deg'(V)=\sum_{1\leq i\leq n}\dim(V\cap H_i)
\eeqv
for $V\subset C$.
However we have $\dim(V\cap H_i)=\dim(V)$ if $V\subset H_i$, \ie if $i\not\in\Supp(V)$, and $\dim(V\cap H_i)=\dim(V)-1$ else.
It follows
\beqv
\deg'(V)=n\dim(V)-w(V)=\deg(V)+n\dim(V)-n,
\eeqv
so the two degree functions $\deg$ and $\deg'$ are equivalent up to some affine transformation.
In particular, by Lemma~\ref{modif_affine}, they define the same canonical filtration on $C$.

Moreover, to any multi-filtered vector space (at least in characteristic zero), Faltings-W\"ustholz associate a certain vector bundle on a curve, whose stalks are determined by the filtrations, in such a way that this construction preserves semistability \cite[proof of Th.~4.1]{FW1994}.
Specializing to the case where the filtrations are of length $1$ and defined by hyperplanes, or equivalently, by a configuration of points in the dual projective space,
this fits as the dashed line in our diagram.
\end{remark}

%\vspace{.5\baselineskip}

A still largely open problem is that of the classification of codes (say up to linear isometries),
or at least the description of a certain ``structure theory'' from which one could easily identify ``good'' codes \cite{Slepian1960}\cite{Assmus1998}.
Since the geometric view on linear codes makes them equivalent with projective configurations of points, GIT appears as a natural tool toward this goal.
Arguably one could see the present work as a first step in this direction.
Actually, ``zero-th step'' would probably be a more accurate description:
indeed, as could be hinted from~\eqref{poidsstable}, codes used for real world error correction will invariably tend to be stable.
So their canonical filtration, and all our associated constructions, become trivial.
Finer geometric invariants have to be considered.
(Still there are a few other exotic applications of codes beside error correction, and one could wonder whether the canonical filtration has a role to play there.)

\vspace{.5\baselineskip}

Our Harder-Narasimhan theory appears to be decorelated from the notions of criticality and spectrum introduced by Assmus in~\cite{Assmus1998}.
Recall that a code is decomposable (resp. indecomposable) if it can (resp. cannot) be written as the direct sum of two nontrivial subcodes with disjoint supports.
An indecomposable code is critical if ``puncturing'' any coordinate,
\ie projecting onto the remaining $n-1$ positions, produces a decomposable code.
The spectrum of an indecomposable $k$-dimensional code $C$ is then the set of all (isometry classes of) critical $k$-dimensional codes $C'$ that can be obtained by projecting $C$ onto some subset of the coordinates.
A sample of the diversity of possible situations is illustrated by the following examples:
\begin{itemize}
\item The binary $[3,2,2]$-code is stable and critical.
\item The $q$-ary $k$-dimensional simplex code, of length $\frac{q^k-1}{q-1}$, whose associated projective configuration is the set of all points in $\PP^{k-1}(\F_q)$, is stable but not critical as soon as $k\geq3$.
In fact, its spectrum is the set of \emph{all} (classes of) $k$-dimensional critical codes.
%\item The binary $[14,6]$-code with generating matrix
%\beqv
%\left(\begin{array}{cccccccccccccc}
%1&1&0&0&0&0&0&0&0&0&0&0&0&0\\
%0&0&1&1&1&1&0&0&0&0&0&0&0&0\\
%0&0&0&0&0&0&1&1&1&1&0&0&0&0\\
%0&0&0&0&0&0&0&0&0&0&1&1&1&1\\
%0&1&0&1&0&1&0&1&0&1&0&1&0&1\\
%0&0&0&0&1&1&0&0&1&1&0&0&1&1
%\end{array}\right)
%\eeqv
\item The binary $[9,7]$-code $C$ with generating matrix
\beqv
\left(\begin{array}{ccccccccc}
1&1&0&0&0&0&0&0&0\\
0&0&1&1&0&0&0&0&0\\
0&0&0&0&1&0&0&0&1\\
0&0&0&0&0&1&0&0&1\\
0&0&0&0&0&0&1&0&1\\
0&0&0&0&0&0&0&1&1\\
0&1&0&1&0&0&0&0&1\\
\end{array}\right)
\eeqv
is critical (it is an instance of \emph{The Construction} of \cite[p.~621]{Assmus1998} with the $3$-partition $X=[9]=\{1,2\}\cup\{3,4\}\cup\{5,6,7,8,9\}$, $X'=\emptyset$)
but not semistable (its subcode $C_{\{5,6,7,8,9\}}$ has rate $4/5>7/9$).
\end{itemize}

\vspace{.5\baselineskip}

Given an (increasing) filtration $\Fil^.$ on an object $X$ in an abelian category, it is customary to define an associated graded object by considering the subsequent quotients $\gr^t X=\Fil^{t}X/\Fil^{t-1}X$.
Unfortunately, the category of linear codes \cite{Assmus1998} is not abelian: in general it lacks a good notion of quotient code.
Still, a nice quotient can be defined for subcodes defined by a support condition: given subsets $S\subset S'\subset[n]$ we define formally $C_{S'}/C_S=\pi_{S'\moins S}(C_{S'})$.
This is compatible with our convention for (combinatorial) lattices, since subcodes of $\pi_{S'\moins S}(C_{S'})$ identify with intermediary subcodes between $C_S$ and $C_{S'}$.

Now let $C$ be a $[n,k]$-code with full support, with canonical filtration
\beqv
0=C_0\subsetneq C_1\subsetneq\cdots\subsetneq C_N=C
\eeqv
and associated slopes $\mu_1>\cdots>\mu_N$.
By Proposition~\ref{symGal} the $C_\alpha$ are closed, \ie they satisfy $C_\alpha=C_{\Supp(C_\alpha)}$, so the above applies and we can define
\beq
\gr^\alpha(C)=\pi_{T_\alpha}(C_\alpha)
\eeq
where $T_\alpha=\Supp(C_\alpha)\moins\Supp(C_{\alpha-1})$.
If $C_\alpha$ has parameters $[n_\alpha,k_\alpha]$, then $\gr^\alpha(C)$ has parameters $[n_\alpha-n_{\alpha-1},k_\alpha-k_{\alpha-1}]$
and is semistable of slope $\mu_\alpha=-\frac{n_\alpha-n_{\alpha-1}}{k_\alpha-k_{\alpha-1}}$.
This gives a way to extract ``possibly good'', \ie semistable codes, from ``assuredly bad'' ones.

The same works for matroids: if $\cM=(E,\cI)$ has canonical filtration
\beqv
\emptyset=J_0\subsetneq J_1\subsetneq\cdots\subsetneq J_N=E
\eeqv
with associated slopes $\mu_1>\cdots>\mu_N$,
we set
\beq
\gr^\alpha(\cM)=(T_\alpha,\cI\cap 2^{T_\alpha})
\eeq
where $T_\alpha=J_\alpha\moins J_{\alpha-1}$. This is a semistable matroid, of slope $\mu_\alpha$.

\vspace{.5\baselineskip}

Last, there is an obvious link between our constructions and Wei's theory of generalized Hamming weights \cite{Wei1991}.
%Recall that the $i$-th generalized Hamming weight of a (full support) $[n,k]$-code $C$ is
Recall that the $i$-th weight of a (full support) $[n,k]$-code $C$ is
\beq
\label{defdi}
d_i(C)=\min\{w(C'):\;C'\subset C,\,\dim(C')=i\}
\eeq
so $0=d_0(C)<d_1(C)=\dmin(C)<\dots<d_k(C)=n$.
The $C'\subset C$ for which the min is attained in \eqref{defdi} will be called minimum weight subcodes (of dimension $i$).
One should keep in mind that, for a given $i$, there need not be a unique such subcode.
More generally, two arbitrary minimum weight subcodes (of different dimension) need not be included one in the other.
A code is said to satisfy the \emph{chain condition} \cite{WY1993}, or to be \emph{chained}, if it admits a complete filtration by minimum weight subcodes.

From \eqref{formule_def_deg_C'} and \eqref{defdi} we see that the canonical polygon of $C$ is entirely determined by its weight hierarchy,
or more precisely:
\begin{lemma}
\label{envelopeGWH}
The canonical polygon of $C$ is the upper convex envelope of the set of points
\beqv
\{(i,n-d_i(C)):\;0\leq i\leq k\}.
\eeqv
\end{lemma}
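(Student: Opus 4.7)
The plan is to work directly from the definition of the canonical polygon as the infimum of affine functions whose graph lies above every point $p_{C'}=(\dim C', n-w(C'))$ for $C'\in L_C$, and to observe that this infimum is controlled only by the highest-lying point at each integer abscissa.

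First I would fix $i\in\{0,1,\dots,k\}$ and examine the set of points $p_{C'}$ with $\rk(C')=\dim(C')=i$. By Definition~\ref{def_deg_C'}, these are exactly the points $(i,n-w(C'))$ as $C'$ ranges over $i$-dimensional subcodes of $C$. The largest $y$-coordinate achieved at abscissa $i$ is
\beqv
\max_{\dim C'=i}\bigl(n-w(C')\bigr)\;=\;n-\min_{\dim C'=i}w(C')\;=\;n-d_i(C),
\eeqv
by the very definition \eqref{defdi} of the generalized Hamming weight $d_i(C)$. Note that $d_0(C)=0$ corresponds to the point $(0,n)=p_{0}$, consistent with our convention that $\deg(0)=n$, and $d_k(C)=n$ corresponds to $p_C=(k,0)$.

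Next I would argue that the infimum of affine functions lying above the full set $\{p_{C'};\;C'\in L_C\}$ coincides with the infimum of affine functions lying above the reduced set $\{(i,n-d_i(C));\;0\leq i\leq k\}$. One inclusion is trivial, since the reduced set is contained in the full set. For the other, any affine function that dominates every reduced point automatically dominates every $p_{C'}$: indeed, for $C'$ of dimension $i$ we have $n-w(C')\leq n-d_i(C)$, so dominating $(i,n-d_i(C))$ forces domination of $(i,n-w(C'))$. Hence $P_L$ is also the infimum of affine functions whose graph lies above the finitely many points $(i,n-d_i(C))$ for $0\leq i\leq k$.

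Finally, by the standard characterization of the upper convex envelope of a finite set of points with distinct abscissae $0,1,\dots,k$ as the infimum of affine functions lying above them (equivalently, as the pointwise minimum of such affine functions on $[0,k]$), we conclude that the graph of $P_L$ on $[0,\rk(L)]=[0,k]$ is precisely the upper convex envelope of $\{(i,n-d_i(C));\;0\leq i\leq k\}$. There is no real obstacle here; the content of the lemma is the unpacking of Definition~\ref{def_deg_C'} in light of \eqref{defdi}, together with the trivial observation that the convex envelope is determined by the topmost point at each integer abscissa.
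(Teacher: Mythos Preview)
Your proof is correct and follows the same reasoning the paper has in mind: the paper presents this lemma as immediate from \eqref{formule_def_deg_C'} and \eqref{defdi} without writing out a proof, and you have simply unpacked that observation carefully. There is nothing different in approach---you give the explicit argument that at each abscissa $i$ the topmost point is $(i,n-d_i(C))$, which is exactly the content of the definitions the paper cites.
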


In particular, the vertices of the canonical polygon form a subset of this set of points, and the subcodes in the canonical filtration of $C$ are minimum weight subcodes.
Thus, even if $C$ is not chained, our theory (especially Theorem~\ref{th_filtr}) allows to extract a subsequence of dimensions,
and corresponding minimum weight subcodes, that form a (partial) filtration, in a canonical way.
For instance, if $C$ is totally unstable, \ie if it admits $k$ different slopes, then it is chained.
However, this example is rather extreme: as already noted, on the contrary, good codes tend to be stable.
And it is possible for a code to be both stable and chained: such is the simplex code introduced a few paragraphs above, or also its affine counterpart, the Reed-Muller code of order~$1$.

%\vspace{.5\baselineskip}

%Seen from the other side of our cosupport Galois connection \eqref{cosupport_connection_C}\eqref{cosupport_connection_n},
%the weight hierarchy becomes the so-called \emph{dimension/length profile} (DLP) of \cite{Forney1994}, that is the sequence of integers
%\beq
%\label{defDLP}
%k_j(C)=\max\{\dim(C_J);\;J\subset[n],\,\#J=j\}
%\eeq
%where $0\leq j\leq n$ and $C_J=C\cap \F^J$.
%Comparing with \eqref{deg_n} then gives:
%\begin{lemma}
%\label{envelopeDLP}
%The canonical polygon of $L_{[n]}$, relative to the cosupport Galois connection with $L_C$, is the upper convex envelope of the set of points
%\beqv
%\{(j,k_{n-j}(C));\;0\leq j\leq n\}.
%\eeqv
%\end{lemma}

\vspace{.5\baselineskip}

As in the setting of vector bundles on curves, or in that of euclidean or hermitian lattices over number fields, an important question is that of the behaviour of the canonical polygon
(and in particular, the preservation of semistability) under basic operations such as extension of scalars, duality, and tensor product.

The first one is almost immediate:
\begin{proposition}
The canonical polygon of $C$, its canonical filtration, and its slopes, are preserved by extension of scalars.
\end{proposition}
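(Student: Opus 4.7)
The plan is to reduce everything to the preservation of the generalized Hamming weights $d_i(C)$ under extension of scalars, and then invoke Lemma~\ref{envelopeGWH} together with the uniqueness part of Theorem~\ref{th_filtr}.

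First I would show that for every field extension $\F'/\F$ and every $1 \leq i \leq k$, one has $d_i(C \tens_\F \F') = d_i(C)$. For the inequality $\leq$, any $i$-dimensional subcode $D \subset C$ gives the subcode $D \tens_\F \F' \subset C \tens_\F \F'$, still of dimension $i$ and with the same support (since $\F'$-linear combinations of $\F$-vectors have support contained in the union of the supports, and conversely each coordinate of the $\F$-basis of $D$ already witnesses its support). For the reverse inequality, pick a subcode $D' \subset C \tens \F'$ of dimension $i$ realizing the minimum $w(D') = d_i(C \tens \F')$, and set $T = \Supp(D')$. Then $D' \subset (C \tens \F') \cap (\F')^T$. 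Because $C \cap \F^T$ is the kernel of $\pi_{[n] \moins T}|_C$ and kernels commute with extension of scalars, we have $(C \tens \F') \cap (\F')^T = (C \cap \F^T) \tens \F'$, hence $\dim_\F(C \cap \F^T) \geq i$. Any $i$-dimensional $\F$-subcode of $C \cap \F^T$ then has support in $T$, so $d_i(C) \leq |T| = d_i(C \tens \F')$.

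Combining with Lemma~\ref{envelopeGWH}, the canonical polygons of $C$ and $C \tens \F'$, being the upper convex envelope of the same point set, coincide. In particular the successive slopes and the break abscissae $i_\alpha$ are the same. Finally, if $C_0 \subsetneq C_1 \subsetneq \cdots \subsetneq C_N = C$ is the canonical filtration of $C$, each $C_\alpha \tens \F'$ has dimension $i_\alpha$ and degree $n - w(C_\alpha \tens \F') = n - w(C_\alpha) = P_L(i_\alpha)$, so by the uniqueness statement of Theorem~\ref{th_filtr} applied to $C \tens \F'$, the element $C_\alpha \tens \F'$ must be the $\alpha$-th term of the canonical filtration of $C \tens \F'$.

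There is essentially no obstacle: the only substantive step is the classical fact that $d_i$ is invariant under extension of scalars, which reduces to flatness of $\tens_\F \F'$ applied to the projection defining $C \cap \F^T$. The rest is an automatic consequence of the uniqueness of the canonical filtration established in Theorem~\ref{th_filtr}.
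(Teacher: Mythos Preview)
Your argument is correct and follows essentially the same approach as the paper, which simply invokes the invariance of the weight hierarchy (equivalently the DLP) under extension of scalars and then implicitly appeals to Lemma~\ref{envelopeGWH}. You have just supplied the details the paper leaves to the reader: the flatness argument for $d_i(C\tens_\F\F')=d_i(C)$, and the identification of the canonical filtration of $C\tens_\F\F'$ with $\{C_\alpha\tens_\F\F'\}$ via the uniqueness in Theorem~\ref{th_filtr}.
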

\begin{proof}
Invariance of the weight hierarchy of $C$ under extension of scalars.
\end{proof}

%%%%%%%%%%%%%%%%%%%%%%%%%%%%%%%%%%%%
%%%%%%%% Slopes and duality %%%%%%%%
%%%%%%%%%%%%%%%%%%%%%%%%%%%%%%%%%%%%

\section{Slopes and duality}
\label{ssd}

Recall that if $C\subset\F^n$ is a $[n,k]$-code, then its \emph{dual code} $C^\perp\subset\F^n$ is the $[n,n-k]$-code
defined as the orthogonal of $C$ relative to the standard bilinear scalar product on $\F^n$.

\begin{remark}
At first, the word ``dual'' might seem misused here, however it has been well established historically.
Moreover, $C^\perp$ enjoys certain properties that make this terminology better suited than one could have first thought, especially in our context.
For instance, say in the binary case, we observe that if $\Gamma_C\subset\R^n$ is the euclidean lattice obtained from $C$ by the so-called Construction~A
(of \cite[\S5.2]{CS}, but suitably normalized, as in \cite[\S1.3]{Ebeling}), then $\Gamma_C$ and $\Gamma_{C^\perp}$ are dual lattices in the usual sense.
\end{remark}

In this section we will give the relation between the canonical polygon of a code $C$ and that of its dual code $C^\perp$.
Thanks to Lemma~\ref{envelopeGWH}, this could be deduced from Wei's duality theorem \cite{Wei1991}, which gives the relation between the weight hierarchy of $C$ and that of $C^\perp$.
However, it turns out it is slightly easier to work first from the other side of our cosupport Galois connection \eqref{cosupport_connection_C}\eqref{cosupport_connection_n}, in terms of matroids.
Actually, our proof does not require the matroid to come from a code, so what we will first state is a duality result for slopes of general matroids.
This will then be translated back to codes thanks to Proposition~\ref{symGal}.

A remarkable fact is that, although the proof uses only arguments from elementary linear algebra, these can be reinterpreted as part of some sort of Riemann-Roch theory, as developed in the Appendix at the end of this work.
%This fits well with the general philosophy of this work, seeking analogies between linear codes on one side, and hermitian lattices over number fields and vector bundles over curves on the other side.
%Details will be given in the Appendix at the end of this work.
%Details will be given in the companion paper \cite{RRcodes}.

\vspace{.5\baselineskip}

We say that a finite matroid $\cM=(E,\cI)$ is a $[n,k]$-matroid if it has cardinality $\#E=n$ and rank $r(E)=k$.
Recall \cite[\S2.1]{Oxley} that the \emph{dual matroid} $\cM^*$ of $\cM$ is then the $[n,n-k]$-matroid with the same underlying set $E$ but whose bases (maximal independent sets) are the complements of those of $\cM$.

For instance, the matroid $\cM$ associated with a $[n,k]$-code $C$ is a $[n,k]$-matroid, and its dual matroid $\cM^*$ is then the matroid associated with the dual code $C^\perp$.

%However, as already explained, a central theme of this work is to stress some formal analogies between codes and their arithmetic and geometric counterparts.
%From this perspective, we find it interesting to show how Wei's theory can be reformulated in a way that makes it strikingly similar with Riemann-Roch theory.

%Admittedly, this should not be taken too seriously: what we will get is by no means as deep as a ``true'' Riemann-Roch theorem.
%Still, we hope these few additional pages will not make a too unpleasant digression.
%The impervious reader can read up to Corollary~\ref{cordualslopes} and then skip directly to the next section.

%Wei's duality theorem for the higher weights of $C$ and $C^\perp$ asserts:
%\begin{theorem}[{\cite[Th.~3]{Wei1991}}]
%\label{thWei}
%Let $C\subset\F^n$ be a $[n,k]$-code. Then the two sets of integers
%\beqv
%\{d_i(C);\;i\in[k]\}\qquad\text{and}\qquad\{n+1-d_i(C^\perp);\;i\in[n-k]\}
%\eeqv
%are disjoint, hence they form a partition of $[n]$.
%\end{theorem}
%As observed in \cite{Forney1994}, there is an equivalent formulation in terms of DLP:
%\begin{proposition}
%\label{dualDLP}
%Let $C\subset\F^n$ be a $[n,k]$-code. Then for any $j\in[n]$ we have
%\beqv
%k_{n-j}(C^\perp)=k_j(C)+n-j-k.
%\eeqv
%Moreover, if $k_j(C)=\dim(C_J)$ for some $J\subset[n]$ with $\#J=j$, then $k_{n-j}(C^\perp)=\dim((C^\perp)_{[n]\moins J})$.
%\end{proposition}
%\begin{proof}
%Combine Th.~2 and Th.~3 of \cite{Forney1994}; or see Proposition~\ref{dualDLPmatroid} below.
%\end{proof}

\begin{theorem}
\label{thdualslopes}
Let $\cM=(E,\cI)$ be a $[n,k]$-matroid.
Suppose that, relative to $\cM$ and the associated degree function~\eqref{degmatroid}, the lattice $L_E$ has canonical polygon
\beq
\label{polygonDLPmatroid}
P_{\cM}:[0,n]\longto\R,
\eeq
with slopes
\beqv
\mu_1>\dots>\mu_N
\eeqv
(in the real interval $[-1,0]$).
Then, relative to the dual matroid $\cM^*$, the lattice $L_E$ has canonical polygon
\beq
\label{polygonDLPdualmatroid}
\begin{array}{cccc}
P_{\cM^*}: & [0,n] & \longto & \R\\
& x & \mapsto & P_{\cM}(n-x)+n-x-k,
\end{array}
\eeq
with slopes
\beqv
-1-\mu_N>\dots>-1-\mu_1.
\eeqv

Moreover, if the vertices of \eqref{polygonDLPmatroid} correspond to the canonical filtration
\beq
\label{filtrationDLPmatroid}
\emptyset=J_0\subsetneq J_1\subsetneq\dots\subsetneq J_N=E,
\eeq
then those of \eqref{polygonDLPdualmatroid} correspond to the dual filtration
\beq
\label{filtrationDLPdualmatroid}
\emptyset=(E\moins J_N)\subsetneq\dots\subsetneq(E\moins J_1)\subsetneq(E\moins J_0)=E.
\eeq
\end{theorem}
\begin{proof}
By definition the canonical polygon of $\cM$ is the upper convex envelope of the set of points
\beqv
(\#J,\deg(J))
\eeqv
for $J\in L_E$, where by~\eqref{degmatroid} we have $\deg(J)=k-r(J)$.
Hence it is also the the upper convex envelope of the set of points
\beqv
(j,f_\cM(j))
\eeqv
where for $0\leq j\leq n$ we set
\beqv
f_\cM(j)=\max\{\deg(J):\;J\subset E,\,\#J=j\}.
\eeqv
Likewise the canonical polygon of $\cM^*$ is the upper convex envelope of the set of points
\beqv
(j,f_{\cM^*}(j))
\eeqv
where $f_{\cM^*}(j)=\max\{\deg^*(J):\;J\subset E,\,\#J=j\}$, $\;\deg^*(J)=n-k-r^*(J)$.

Now a basic duality result for matroids \cite[Prop.~2.1.9]{Oxley} reads
\beqv
r^*(J)=\#J-k+r(E\moins J),
\eeqv
which we can also write as
\beqv
\deg^*(J)=\deg(E\moins J)+n-\#J-k.
\eeqv
Letting $J$ vary in this last equality, with $\#J=j$ fixed, we then find that the maximum is attained in the left-hand and right-hand side for the same $J$, with value
\beqv
f_{\cM^*}(j)=f_{\cM}(n-j)+n-j-k
\eeqv
(which is actually Proposition~\ref{dualDLPmatroid} in the Appendix).
The conclusion follows.
\end{proof}

\begin{corollary}
The matroid $\cM$ is semistable if and only if its dual $\cM^*$ is.
\end{corollary}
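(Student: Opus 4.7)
The plan is to derive the corollary as an immediate consequence of the preceding theorem, which describes precisely how the canonical polygons (and hence the slopes) of $\cM$ and $\cM^*$ are related.

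First, I would recall that by definition, a Harder-Narasimhan lattice is semistable exactly when its canonical polygon consists of a single segment, i.e. when the number $N$ of distinct successive slopes equals $1$. So the goal reduces to showing that the number of slopes of $\cM$ equals the number of slopes of $\cM^*$.

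Next, I would apply the theorem displayed just above: if $\cM$ has successive slopes $\mu_1>\cdots>\mu_N$, then $\cM^*$ has successive slopes $-1-\mu_N>\cdots>-1-\mu_1$. The map $\mu\mapsto -1-\mu$ is a strictly decreasing bijection of $\R$, so it sends strictly decreasing sequences of length $N$ to strictly decreasing sequences of length $N$ without collapsing any distinct values. In particular, $\cM$ and $\cM^*$ have the same number of slopes. Specializing to $N=1$ then gives both directions of the equivalence, and since the relation $(\cM^*)^*=\cM$ (see \cite[\S2.1]{Oxley}) makes the situation symmetric, one direction actually suffices.

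There is essentially no obstacle here: the corollary is a formal consequence of the theorem, and all the substance of the argument lies in the slope-duality theorem itself (which in turn rests on Proposition~\ref{dualDLPmatroid}, i.e. the matroid form of Wei's duality). The only thing to double-check is that a pathological case cannot occur — for instance, one should verify that when $N=1$ for $\cM$, the single slope $\mu_1$ of $\cM$ and the single slope $-1-\mu_1$ of $\cM^*$ are genuinely well-defined, which follows from the theorem's assertion that the slopes of $\cM$ lie in $[-1,0]$, so the dual slopes lie in $[-1,0]$ as well. Thus no additional work is required.
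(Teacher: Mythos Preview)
Your proposal is correct and is exactly the intended argument: the paper states this corollary with no proof because it follows immediately from the preceding theorem, and your observation that the bijection $\mu\mapsto -1-\mu$ preserves the number $N$ of distinct slopes is precisely the one-line justification. The extra remarks about $(\cM^*)^*=\cM$ and the range of the slopes are harmless but unnecessary.
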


%\begin{definition}
%The \emph{dimension/length profile} (DLP) of a $[n,k]$-matroid $\cM=(E,\cI)$ is the sequence
%\beq
%\label{defDLP}
%k_j(\cM)=\max\{k-r(E\moins J);\;J\subset E,\,\#J=j\}
%\eeq
%for $0\leq j\leq n$.
%\end{definition}
%Observe that if $\cM$ is the matroid associated with a $[n,k]$-code $C$, then from \eqref{k-r(J)} we have $k-r(E\moins J)=\dim(C_J)$.
%So the DLP of $\cM$ coincides with the DLP of $C$ as introduced in \cite{Forney1994}.
%In particular this sequence is dual to the higher weight hierarchy \eqref{defdi} under the cosupport Galois connection.

%\begin{proposition}
%\label{dualDLPmatroid}
%Let $\cM=(E,\cI)$ be a $[n,k]$-matroid. Then for any $j\in[n]$ we have
%\beqv
%k_{n-j}(\cM^*)=k_j(\cM)+n-j-k.
%\eeqv
%Moreover, if $k_j(\cM)=k-r_{\cM}(E\moins J)$ for some $J\subset E$ with $\#J=j$, then $k_{n-j}(\cM^*)=n-k-r_{\cM^*}(J)$.
%\end{proposition}
%\begin{proof}
%For $J\subset E$ with $\#J=j$ we write Proposition~\ref{RRmatroid} as
%\beq
%h^0(\cM^*,E\moins J)=h^0(\cM,J)+n-j-k.
%\eeq
%Letting $J$ vary with $\#J=j$ fixed, we then find that the minimum is attained in the left-hand and right-hand side for the same $J$, with value $k_{n-j}(\cM^*)=k_j(\cM)+n-j-k$ as claimed.
%\end{proof}

\begin{corollary}
\label{cordualslopes}
Suppose that $C$ has both minimum distance and dual distance at least $2$, \ie that both $C$ and $C^\perp$ have full support.
Let then $C$ have canonical filtration
\beqv
0=C_0\subsetneq C_1\subsetneq\dots\subsetneq C_N=C
\eeqv
with slopes
\beqv
\mu_1>\dots>\mu_N
\eeqv
(in $\R_{<-1}$).
Then $C^\perp$ has canonical filtration
\beqv
0=(C^\perp)_{[n]\moins\Supp(C_N)}\subsetneq\dots\subsetneq (C^\perp)_{[n]\moins\Supp(C_1)}\subsetneq (C^\perp)_{[n]\moins\Supp(C_0)}=C^\perp
\eeqv
with slopes
\beqv
-1+(\mu_N+1)^{-1}>\dots>-1+(\mu_1+1)^{-1}.
\eeqv
\end{corollary}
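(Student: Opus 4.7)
The plan is to chain three results already proved in the paper. By hypothesis both $C$ and $C^\perp$ have full support, so the two cosupport Galois connections $(L_C, L_{[n]})$ and $(L_{C^\perp}, L_{[n]})$ are separated, and Proposition~\ref{symGal} applies in its detailed form on each side. The same hypothesis on $C^\perp$ moreover means $C$ contains no standard basis vector, which forces every nonzero $C' \subset C$ to satisfy $w(C') > \dim(C')$ and hence $\mu(C'/0) < -1$ (since $w(C')=\dim(C')$ would force $C' = \F^{\Supp(C')}$); this justifies the parenthetical ``in $\R_{<-1}$'' and ensures each $\mu_\alpha + 1$ below is nonzero.

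First, I would apply Proposition~\ref{symGal} to the cosupport connection between $L_C$ and $L_{[n]}$: starting from the canonical filtration of $C$ with slopes $\mu_1 > \dots > \mu_N$, this produces the canonical filtration of $L_{[n]}$ relative to $L_C$, whose vertices are the cosupports $C_\alpha^\circ = [n]\moins\Supp(C_\alpha)$ in reversed order, with slopes $\mu_{N+1-\alpha}^{-1}$. Next, I would invoke Theorem~\ref{thdualslopes}, which describes exactly how this filtration of $L_{[n]}$ changes when $L_C$ is replaced by $L_{C^\perp}$: vertices are complemented and the order reversed, so the two reversals cancel and the two complementations combine to give the filtration $\Supp(C_0) \subsetneq \dots \subsetneq \Supp(C_N)$ in natural order, while each slope $\nu$ becomes $-1-\nu$, yielding slopes $-1-\mu_\alpha^{-1}$. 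A second application of Proposition~\ref{symGal}, now to the cosupport connection between $L_{C^\perp}$ and $L_{[n]}$, finally transports this to the canonical filtration of $L_{C^\perp}$: vertices become $\Supp(C_\alpha)^\circ = (C^\perp)_{[n]\moins\Supp(C_\alpha)}$ in reversed order, and slopes invert to $(-1-\mu_\alpha^{-1})^{-1}$.

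The statement is then immediate from the elementary identity
\[
(-1-\mu^{-1})^{-1} \;=\; \frac{-\mu}{\mu+1} \;=\; -1+(\mu+1)^{-1},
\]
valid for $\mu \neq 0, -1$. I expect no essential difficulty beyond the bookkeeping of order reversals, complementations, and slope inversions across the three dualities; as a consistency check, the resulting slopes $-1+(\mu_\alpha+1)^{-1}$ are again strictly less than $-1$, in line with $C^\perp$ satisfying the same full-support hypothesis as $C$.
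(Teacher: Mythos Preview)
Your proposal is correct and follows exactly the route the paper indicates: the paper's proof is the one-line ``Combine Theorem~\ref{thdualslopes} with Proposition~\ref{symGal}'', and what you have written is precisely that combination spelled out in detail (apply Proposition~\ref{symGal} to pass from $L_C$ to $L_{[n]}$, then Theorem~\ref{thdualslopes} to pass to $L_{[n]}$ relative to $L_{C^\perp}$, then Proposition~\ref{symGal} again to reach $L_{C^\perp}$). Your bookkeeping of the reversals, complements, and slope transformations is accurate, and your added justification that the slopes lie in $\R_{<-1}$ is a welcome clarification the paper leaves implicit.
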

\begin{proof}
Let $\cM$ be the matroid associated with $C$, so $\cM^*$ is the matroid associated with $C^\perp$.
From the canonical filtration and slopes of $C$, we can deduce those of $\cM$ thanks to Proposition~\ref{symGal}.
We then deduce those of $\cM^*$ thanks to Theorem~\ref{thdualslopes}, and finally we deduce those of $C^\perp$ back thanks to Proposition~\ref{symGal} again.
\end{proof}

From this last result, the case where $C$ or $C^\perp$ does not have full support is also easily treated,
%\eg thanks to the ``standard decomposition'' $C=T\oplus V\oplus C''$, eq.~(2) of \cite{Assmus1998},
\eg thanks to the ``standard decomposition'' \cite[eq.~(2)]{Assmus1998},
and our Lemma~\ref{lemsepGal} (or also Proposition~\ref{slope=-rate-1}).
For instance, we see that $C^\perp$ does not have full support if and only if $\mumax(C)=-1$,
in which case the first step of the canonical filtration of $C$ is the subcode $V\subset C$ generated by the codewords of weight~$1$.

\begin{corollary}
A linear code $C$ is semistable if and only if its dual $C^\perp$ is.
\end{corollary}

%%%%%%%%%%%%%%%%%%%%%%%%%%%%%%%%%%%%%%%%%%%%%%%%%%
%%%%%%%% Semistability and tensor product %%%%%%%%
%%%%%%%%%%%%%%%%%%%%%%%%%%%%%%%%%%%%%%%%%%%%%%%%%%

\section{Semistability and tensor product}

Given a $[n_A,k_A]$-code $A\subset\F^{n_A}$ and a $[n_B,k_B]$-code $B\subset\F^{n_B}$, we can form their tensor product
\beqv
A\tens B\quad\subset\quad\F^{n_A}\tens\F^{n_B}\simeq\F^{n_An_B}
\eeqv
which is thus a $[n_An_B,k_Ak_B]$-code, of rate $\ratef(A\tens B)=\ratef(A)\ratef(B)$.

From the geometric point of view, if $A$ corresponds to a collection of $n_A$ points $\Pi_A\subset\PP^{k_A-1}$,
and $B$ to a collection of $n_B$ points $\Pi_B\subset\PP^{k_B-1}$,
then $A\tens B$ corresponds to the image of $\Pi_A\times\Pi_B$ under the Segre embedding $\PP^{k_A-1}\times\PP^{k_B-1}\to\PP^{k_Ak_B-1}$.

It is easily shown that if $A$ has minimum distance $d_1(A)=d_A$ and $B$ has minimum distance $d_1(B)=d_B$,
then $A\tens B$ has minimum distance $d_1(A\tens B)=d_Ad_B$.
An interesting problem is the derivation of similar estimates for the higher weights $d_r(A\tens B)$.
In \cite{Schaathun2000}, Schaathun proved an important lower bound on these quantities.

For $0\leq r\leq k_Ak_B$, set
\beqv
\begin{split}
d_r^*(A\tens B)&=\min\left\{\sum_{i=1}^s(d_i(A)-d_{i-1}(A))d_{t_i}(B):\right.\\
&\qquad\qquad\qquad\qquad\qquad \left.1\leq t_s\leq\cdots\leq t_1\leq k_B,\;\; s\leq k_A,\;\;\sum_{i=1}^s t_i=r\right\}\\
&= \min\left\{\sum_{i=1}^{k_A}(d_i(A)-d_{i-1}(A))d_{t_i}(B):\right.\\
&\qquad\qquad\qquad\qquad\qquad \left.0\leq t_{k_A}\leq\cdots\leq t_1\leq k_B,\;\;\sum_{i=1}^{k_A} t_i\geq r\right\}\\ 
\end{split}
\eeqv
(observe that the minimum in the second line is the same as in the first: this is because the $d_i(A)$ and $d_i(B)$ form monotonously increasing sequences). 

\begin{theorem}[\cite{Schaathun2000}]
\label{Schaathun}
With the notations above, for any subcode $C\subset A\tens B$ of dimension $\dim(C)=r$, we have $w(C)\geq d_r^*(A\tens B)$.
Or equivalently,
\beqv
d_r(A\tens B)\geq d_r^*(A\tens B).
\eeqv
\end{theorem}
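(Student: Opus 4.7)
The plan is to prove a flag-based lower bound on $w(C)$ and then optimize over flags. Let $C\subset A\tens B$ have dimension $r$, and let $0=A_0\subsetneq A_1\subsetneq\cdots\subsetneq A_{k_A}=A$ be any complete flag in $A$. Set $C_j=C\cap(A_j\tens B)$ and $\Delta_j=\dim C_j-\dim C_{j-1}$, so $\sum_j\Delta_j=r$. The key technical bound to establish is
\beq
\label{planbd}
w(C)\ \geq\ \sum_{j=1}^{k_A}\bigl(w(A_j)-w(A_{j-1})\bigr)\,d_{\Delta_j}(B).
\eeq

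To derive~\eqref{planbd}, choose a basis $a_1,\dots,a_{k_A}$ of $A$ adapted to the flag, so that $A_j=\linspan{a_1,\dots,a_j}$. Any $M\in A\tens B$ decomposes uniquely as $M=\sum_i a_i\tens v_i$ with $v_i\in B$, and $M\in C_j$ iff $v_{j+1}=\cdots=v_{k_A}=0$. Consequently $C_j/C_{j-1}\hookrightarrow(A_j/A_{j-1})\tens B\simeq B$ identifies with $B_j^\star:=\{v_j:M\in C_j\}\subset B$, a subcode of dimension $\Delta_j$, and $w(B_j^\star)\geq d_{\Delta_j}(B)$. Now partition $[n_A]$ into $T_j:=\Supp(A_j)\moins\Supp(A_{j-1})$ (for $j=1,\dots,k_A$) together with $[n_A]\moins\Supp(A)$. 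The key observation is that for $i_0\in T_j$, the codimension-$1$ hyperplane $\{a\in A_j:a_{i_0}=0\}$ of $A_j$ contains $A_{j-1}$ and hence coincides with it; thus $(a_i)_{i_0}=0$ for $i<j$ and $(a_j)_{i_0}\neq0$. It follows that $\text{row}_{i_0}(M)=\sum_{i\geq j}(a_i)_{i_0}v_i$ vanishes on $C_{j-1}$ and, restricted to $C_j$, equals $(a_j)_{i_0}v_j$, whose image as $M$ runs over $C_j$ is the nonzero-scalar multiple $(a_j)_{i_0}\cdot B_j^\star=B_j^\star$. Therefore $\text{row}_{i_0}(C)\supset B_j^\star$ and $w(\text{row}_{i_0}(C))\geq d_{\Delta_j}(B)$. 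Summing $w(C)=\sum_{i_0\in[n_A]}w(\text{row}_{i_0}(C))$ (rows outside $\Supp(A)$ contributing zero) and using $\#T_j=w(A_j)-w(A_{j-1})$ yields~\eqref{planbd}.

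To conclude, one must choose the flag so that the right-hand side of~\eqref{planbd} dominates an admissible term of $d_r^*(A\tens B)$. When $A$ is chained---\ie it admits a flag with $w(A_j)=d_j(A)$ for every $j$, and one can arrange this flag adapted to $C$ so that the resulting $(\Delta_j)$ is non-increasing---the bound~\eqref{planbd} takes exactly the form $\sum_i(d_i(A)-d_{i-1}(A))d_{t_i}(B)$ with $t_i=\Delta_i$, hence is $\geq d_r^*(A\tens B)$. In the general case the argument proceeds by induction on $k_A$: pick $A_1\subset A$ of dimension~$1$ with $w(A_1)=d_1(A)$, isolate the first-level contribution $d_1(A)\cdot d_{\Delta_1}(B)$ from~\eqref{planbd}, and apply the induction hypothesis to bound the remaining contribution coming from $C/C_1$ viewed as a subcode in a tensor product with a code of dimension $k_A-1$, recombining through standard inequalities between the weight hierarchies of $A$ and those of its subcodes and shortenings.

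The main obstacle is this combinatorial optimization: the bound~\eqref{planbd} depends delicately on the flag, and a poor choice of flag can produce a $(\Delta_j)$ whose corresponding sum is strictly smaller than $d_r^*(A\tens B)$. Showing that for any $C$ one can always select a flag yielding~\eqref{planbd} $\geq d_r^*(A\tens B)$ is the technical heart of Schaathun's original proof~\cite{Schaathun2000}, written there in the language of projective configurations of points and intersections with linear subvarieties of the Segre image. Our reformulation in terms of row projections of subcodes makes the flag bound~\eqref{planbd} transparent; the combinatorial rearrangement required to reach $d_r^*(A\tens B)$ remains essentially the same as in \emph{loc.\ cit.}
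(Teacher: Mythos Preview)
Your flag bound~\eqref{planbd} is correctly derived, but the proof is not complete: you never actually show that some flag gives a right-hand side~$\geq d_r^*(A\tens B)$. In the chained case your claim that one can ``arrange this flag adapted to $C$ so that the resulting $(\Delta_j)$ is non-increasing'' is unsubstantiated---the chain flag is fixed by the condition $w(A_j)=d_j(A)$, and the $\Delta_j$ are then determined by~$C$; there is no mechanism to reorder them. The inductive sketch is likewise too vague: $A/A_1$ is not a subcode of $\F^{n_A}$, so ``$C/C_1$ viewed as a subcode in a tensor product with a code of dimension $k_A-1$'' has no clear meaning, and the ``standard inequalities'' you invoke between the hierarchies of $A$ and its shortenings are not stated, let alone shown to suffice. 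Your final paragraph concedes the point by deferring the entire optimization step to~\cite{Schaathun2000}.

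The paper's proof avoids this obstacle altogether, and the difference is structural rather than cosmetic. Instead of fixing a flag in~$A$ and reading off increments~$\Delta_j$ of~$C$ along it, one lets~$C$ dictate a nested family of subsets of~$[n_B]$: set $J_i=\{j\in[n_B]:\dim\widehat{\pi_j}(C)\geq i\}$, where $\widehat{\pi_j}$ is the $j$-th column projection $A\tens B\to A$, and put $t_i=\dim B_{J_i}$. These~$t_i$ are \emph{automatically} non-increasing and already in the form appearing in the definition of~$d_r^*$, so no flag choice or rearrangement is needed. The weight inequality follows from $w(C)=\sum_j w(\widehat{\pi_j}(C))$, the bound $w(\widehat{\pi_j}(C))\geq d_i(A)$ for $j\in J_i$, and $\#J_i\geq d_{t_i}(B)$, combined by summation by parts. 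The dimension inequality $\sum_i t_i\geq\dim C$ is obtained by choosing relative information sets $S_i\subset J_i\moins J_{i+1}$ for $B_{J_i}/B_{J_{i+1}}$, so that $S=\bigsqcup_i S_i$ is an information set for~$B$; then $C\hookrightarrow\bigoplus_{j\in S}\widehat{\pi_j}(C)$ with $\dim\widehat{\pi_j}(C)=i$ for $j\in S_i$ and $\#S_i=t_i-t_{i+1}$. This yields the theorem in one pass, with no appeal to the original proof.
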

Previously, in \cite{WY1993}, Wei and Yang had proved inequality in the other direction when both $A$ and $B$ are chained
(hence Schaathun's bound implies equality in this case).

Thanks to Lemma~\ref{envelopeGWH}, a lower bound on the weight hierarchy is the same as an upper bound on the canonical polygon.
It is then no surprise that Schaathun's bound will be the key ingredient to our:

\begin{theorem}
\label{produit_sst}
Suppose the linear codes $A$ and $B$ are semistable. Then their tensor product $A\tens B$ is semistable.
\end{theorem}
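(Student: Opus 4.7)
The plan is to combine Schaathun's bound (Theorem~\ref{Schaathun}) with the semistability criterion \eqref{poidsstable} applied to $A$ and to $B$, and then verify an elementary inequality by Abel summation.

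First, by Proposition~\ref{slope=-rate-1} and formula \eqref{poidsstable}, semistability of $A\tens B$ is equivalent to the assertion that every subcode $C\subset A\tens B$ of dimension $r$ satisfies $w(C)\geq r n_An_B/(k_Ak_B)$. Since Theorem~\ref{Schaathun} gives $w(C)\geq d_r^*(A\tens B)$, it suffices to prove
\beqv
d_r^*(A\tens B)\geq\frac{r\, n_An_B}{k_Ak_B}.
\eeqv

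Second, semistability of $A$ and $B$, applied through \eqref{poidsstable} to minimum weight subcodes of each given dimension, yields the pointwise inequalities $d_i(A)\geq i\, n_A/k_A$ for $0\leq i\leq k_A$ and $d_j(B)\geq j\, n_B/k_B$ for $0\leq j\leq k_B$.

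Third, I would fix any admissible tuple $0\leq t_{k_A}\leq\cdots\leq t_1\leq k_B$ with $\sum_i t_i\geq r$ from the second expression defining $d_r^*(A\tens B)$, and first apply the bound on $B$:
\beqv
\sum_{i=1}^{k_A}(d_i(A)-d_{i-1}(A))\,d_{t_i}(B)\;\geq\;\frac{n_B}{k_B}\sum_{i=1}^{k_A}(d_i(A)-d_{i-1}(A))\,t_i.
\eeqv
Setting $t_{k_A+1}=0$ and $d_0(A)=0$, Abel summation rewrites the latter sum as $\sum_{i=1}^{k_A}d_i(A)(t_i-t_{i+1})$, in which every factor $t_i-t_{i+1}$ is nonnegative thanks to the monotonicity of the $t_i$. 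Then the bound on $A$ gives a further lower bound of $(n_A/k_A)\sum_{i=1}^{k_A}i(t_i-t_{i+1})$, and a second Abel summation collapses this to $(n_A/k_A)\sum_{i=1}^{k_A}t_i\geq (n_A/k_A)\,r$. Combining, I obtain the required inequality $d_r^*(A\tens B)\geq r\,n_An_B/(k_Ak_B)$.

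The entire content of the argument is carried by Schaathun's Theorem~\ref{Schaathun}; once that is granted, the remainder is just bookkeeping of sums. The only mild subtlety to watch is to invoke the version of $d_r^*$ in which the $t_i$ are \emph{non-increasing}, so that the two Abel summations have the correct sign; using the other (unordered) expression would make the last step meaningless.
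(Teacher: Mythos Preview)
Your proposal is correct and follows essentially the same approach as the paper: both combine Schaathun's bound with the semistability inequalities $d_i(A)\geq i\,n_A/k_A$ and $d_j(B)\geq j\,n_B/k_B$, and then perform the same two summations by parts to obtain $w(C)\geq \dim(C)/\ratef(A\tens B)$. The only cosmetic difference is that the paper works with the specific $t_i$ produced by Lemma~\ref{lemSchaathun}, whereas you argue uniformly over all admissible non-increasing tuples in the definition of $d_r^*$; the resulting chain of inequalities is identical.
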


The analogue of Theorem~\ref{produit_sst} is known to be true for vector bundles over curves in characteristic zero,
but false in positive characteristic, where counter-examples have been constructed \cite{Gieseker1973}.
For euclidean lattices (or more generally hermitian lattices, or adelic vector bundles over number fields) the question has been popularized by Bost and is still open,
despite recent progress \cite{BC2013}\cite{GR2013} that allow to settle low dimensional cases, or lattices admitting a large group of automorphisms.

Schaathun's proof of Theorem~\ref{Schaathun} was written in terms of systems of points in a projective space, using the geometric view on coding theory.
We propose here a reformulation directly in terms of codes.

Because of the close links between lattices of codes, it is not unreasonable to hope that this reformulation could provide inspiration for new approaches on the semistability conjecture for tensor products of lattices.
Moreover the proof we will give here will appear to be shorter and marginally simpler than Schaathun's original one, although closer inspection would reveal the ideas remain fundamentally the same.

First we introduce some notation. For any $j\in[n_B]$, let $\pi_j:\F^{n_B}\to\F$ be the $j$-th standard projection. Tensoring with $\F^{n_A}$ then gives us
\beqv
\widehat{\pi_j}:\F^{n_A}\tens\F^{n_B}\longto\F^{n_A}.
\eeqv
It is customary to identify $\F^{n_A}\tens\F^{n_B}$ with the space $\F^{n_A\times n_B}$ of matrices with $n_A$ rows and $n_B$ columns.
The projection $\widehat{\pi_j}$ is then just the linear map that associates to each such matrix its $j$-th column.
Under this identification, $A\tens B$ becomes the space of matrices all of whose columns are in $A$ and all of whose rows are in $B$.
Thus, for any subcode $C\subset A\tens B$, we see that $\widehat{\pi_j}(C)$ is a subcode of $A$.
Moreover, the support size of $C$ can be computed as
\beq
\label{support_colonnes}
w(C)=\sum_{j=1}^{n_B}w(\widehat{\pi_j}(C)).
\eeq

In order to prove Theorem~\ref{Schaathun}, it suffices, given any $C\subset A\tens B$, to construct a sequence $0\leq t_{k_A}\leq\cdots\leq t_1\leq k_B$
such that $w(C)\geq\sum_{i=1}^{k_A}(d_i(A)-d_{i-1}(A))d_{t_i}(B)$ and $\sum_{i=1}^{k_A} t_i\geq\dim(C)$.
Thus Theorem~\ref{Schaathun} follows from the following more precise result:

\begin{lemma}
\label{lemSchaathun}
Given a subcode $C\subset A\tens B$, set, for $1\leq i\leq k_A$,
\beqv
J_i=\{j\in[n_B]:\;\dim(\widehat{\pi_j}(C))\geq i\}
\eeqv
and
\beqv
t_i=\dim(B_{J_i}).
\eeqv
Then we have
\beq
\label{estpoids}
w(C)\geq\sum_{i=1}^{k_A}(d_i(A)-d_{i-1}(A))d_{t_i}(B)
\eeq
and
\beq
\label{estdim}
\sum_{i=1}^{k_A} t_i\geq\dim(C).
\eeq
\end{lemma}
\begin{proof}[Proof (of Lemma~\ref{lemSchaathun}, hence also of Theorem~\ref{Schaathun}).]
For $1\leq i\leq k_A$ we have, by the very definition of higher weights, 
\beq
\label{minA}
w(\widehat{\pi_j}(C))\geq d_i(A)\qquad\text{for $j\in J_i$}
\eeq
and
\beq
\label{minB}
\#J_i\geq w(B_{J_i})\geq d_{t_i}(B).\qquad\qquad
\eeq
This works also for $i=0$ and $i=k_A+1$ if we set $J_0=[n_B]$, $t_0=k_B$, and $J_{k_A+1}=\emptyset$, $t_{k_A+1}=0$.

The decreasing filtration $[n_B]=J_0\supset J_1\supset\cdots\supset J_{k_A}\supset J_{k_A+1}=\emptyset$ then
allows to rewrite \eqref{support_colonnes} as
\begin{align*}
w(C)&=\sum_{i=0}^{k_A}\sum_{j\in J_i\moins J_{i+1}}w(\widehat{\pi_j}(C))\\
&\geq\sum_{i=0}^{k_A}d_i(A)(\#J_i-\#J_{i+1}) &&\text{by \eqref{minA}}\\
&=\sum_{i=1}^{k_A}(d_i(A)-d_{i-1}(A))\#J_i &&\text{summation by parts}\\
&\geq\sum_{i=1}^{k_A}(d_i(A)-d_{i-1}(A))d_{t_i}(B) &&\text{by \eqref{minB},}
\end{align*}
which proves~\eqref{estpoids}.

Now for any $i$, choose a relative information set $S_i$ for $B_{J_i}$ modulo $B_{J_{i+1}}$, \ie a subset
\beqv
S_i\subset J_i\moins J_{i+1}
\eeqv
such that the projection $\pi_{S_i}$ induces a bijection
\beqv
B_{J_i}/B_{J_{i+1}}\overset{\sim}{\longto}\,\F^{S_i}
\eeqv
(this is possible since, by definition, $B_{J_i}=B\cap\F^{J_i}\subset\F^{J_i}$ so, passing to the quotient, $\pi_{J_i\moins J_{i+1}}$ induces an injection $B_{J_i}/B_{J_{i+1}}\inj\F^{J_i\moins J_{i+1}}$).
Then necessarily, $\#S_i=\dim(B_{J_i}/B_{J_{i+1}})=t_i-t_{i+1}$.

Considering the filtration $B=B_{J_0}\supset B_{J_1}\supset\cdots\supset B_{J_{k_A}}\supset B_{J_{k_A+1}}=0$,
we see that the disjoint union $S=S_0\sqcup S_1\sqcup\cdots\sqcup S_{k_A}$ is an information set for $B$,
\ie we have an isomorphism
\beqv
\pi_S:B\overset{\sim}{\longto}\,\F^S.
\eeqv
Tensoring with $A$ then gives $\widehat{\pi_S}:A\tens B\overset{\sim}{\longto}\,A^S$, which restricts to $C$ as
\beqv
C\simeq\widehat{\pi_S}(C)\;\;\subset\;\;\bigoplus_{j\in S}\widehat{\pi_j}(C)
\eeqv
(or said more concretely: a codeword in $B$ is determined by its coordinates over $S$, hence a codeword in $A\tens B$, thus \emph{a fortiori} a codeword in $C$,
is determined by its columns over $S$).

Now writing $\bigoplus_{j\in S}\widehat{\pi_j}(C)=\bigoplus_{i=0}^{k_A}\bigoplus_{j\in S_i}\widehat{\pi_j}(C)$, and using the fact that $\dim(\widehat{\pi_j}(C))=i$ for $j\in S_i\subset J_i\moins J_{i+1}$,
we conclude
\beqv
\dim(C)\leq\sum_{i=0}^{k_A}\sum_{j\in S_i}\dim(\widehat{\pi_j}(C))=\sum_{i=0}^{k_A}i\#S_i=\sum_{i=0}^{k_A}i(t_i-t_{i+1})=\sum_{i=1}^{k_A}t_i.
\eeqv
\end{proof}

We can now proceed to our proof of the semistability theorem for tensor products of codes.
This can be derived either as a consequence of Theorem~\ref{Schaathun}, or slightly more directly,
by reworking part of the proof of Lemma~\ref{lemSchaathun}. We propose the latter:

\begin{proof}[Proof of Theorem~\ref{produit_sst}.]
Let $C\subset A\tens B$ be any subcode, and let then the $J_i$ and the $t_i$ be given by Lemma~\ref{lemSchaathun}.
%Thanks to \eqref{poidsstable}, that $A$ and $B$ are semistable means that their higher weights satisfy
%\beq
%\label{Asst}
%d_i(A)\geq i/\ratef(A)
%\eeq
%and
%\beq
%\label{Bsst}
%d_i(B)\geq i/\ratef(B).
%\eeq
%Replacing, we deduce:
%\begin{align*}
%w(C)&\geq\sum_{i=1}^{k_A}(d_i(A)-d_{i-1}(A))d_{t_i}(B)&&\text{by \eqref{estpoids}}\\
%&\geq\frac{1}{\ratef(B)}\sum_{i=1}^{k_A}(d_i(A)-d_{i-1}(A))t_i&&\text{by \eqref{Bsst}}\\
%&=\frac{1}{\ratef(B)}\sum_{i=1}^{k_A}d_i(A)(t_i-t_{i+1})&&\text{summation by parts}\\
%&\geq\frac{1}{\ratef(A)\ratef(B)}\sum_{i=1}^{k_A}i(t_i-t_{i+1})&&\text{by \eqref{Asst}}\\
%&=\frac{1}{\ratef(A)\ratef(B)}\sum_{i=1}^{k_A}t_i&&\text{summation by parts}\\
%&\geq\;\;\frac{\dim(C)}{\ratef(A)\ratef(B)}=\frac{\dim(C)}{\ratef(A\tens B)}&&\text{by \eqref{estdim}.}
%\end{align*}
Thanks to \eqref{poidsstable}, semistability of $A$ implies
\beq
\label{Asst}
\begin{split}
w(\widehat{\pi_j}(C))&\geq\dim(\widehat{\pi_j}(C))/\ratef(A)\\
&\geq i/\ratef(A)\qquad\quad\text{for $j\in J_i$},
\end{split}
\eeq
and semistability of $B$ implies
\beq
\label{Bsst}
\#J_i\geq w(B_{J_i})\geq t_i/\ratef(B).
\eeq
The computation in the first half of the proof of Lemma~\ref{lemSchaathun} then becomes:
\begin{align*}
w(C)&=\sum_{i=0}^{k_A}\sum_{j\in J_i\moins J_{i+1}}w(\widehat{\pi_j}(C))\\
&\geq\frac{1}{\ratef(A)}\sum_{i=0}^{k_A}i(\#J_i-\#J_{i+1}) &&\text{by \eqref{Asst}}\\
&=\frac{1}{\ratef(A)}\sum_{i=1}^{k_A}\#J_i &&\text{summation by parts}\\
&\geq\frac{1}{\ratef(A)\ratef(B)}\sum_{i=1}^{k_A}t_i\geq\frac{\dim(C)}{\ratef(A\tens B)} &&\text{by \eqref{Bsst} and \eqref{estdim}.}
\end{align*}
Using \eqref{poidsstable} again, this means precisely that $A\tens B$ is semistable.
\end{proof}

\begin{remark}
We have shown that semistability is preserved under many natural operations on codes, such as extension of scalars, duality, and tensor product.
Here is an operation that does not preserve it: although \emph{componentwise product} of codes is closely related to tensor product \cite[\S1.10]{Randriambololona2015}, it is easily checked (\eg using Proposition~\ref{slope=-rate-1}) that the binary $[5,2]$-code $C$, with generator matrix
\beqv
\left(\begin{array}{ccccc}
1&0&0&1&1\\
0&1&1&1&1
\end{array}\right)
\eeqv
is semistable, and even stable, while its square $C^{\langle2\rangle}$, 
with generator matrix
\beqv
\left(\begin{array}{ccccc}
1&0&0&0&0\\
0&1&1&0&0\\
0&0&0&1&1
\end{array}\right)
\eeqv
is unstable.
\end{remark}

\begin{remark}
In Remark~\ref{remBost} we saw that codes could be seen as multi-filtered spaces in the sense of Faltings-W\"ustholz and Totaro, in a way that preserves the canonical filtration.
Moreover these authors also proved that, in characteristic zero, a tensor product of semistable multi-filtered spaces is semistable (in \cite[Th.~4.1]{FW1994} this is done by reducing to the case of vector bundles on curves, and in \cite{Totaro1994} this is done in a somehow more elementary way, although still relying on the GIT machinery).

However one should be careful: \emph{the tensor product of codes does not coincide with the tensor product of multi-filtered spaces!}

A first point is that \cite{FW1994}\cite{Totaro1994} consider two spaces with the same number $n$ of filtrations, and produce a tensor product also with $n$ filtrations.
From this perspective, the theory is more similar to that of componentwise products of the preceding remark, than to that of tensor products, where $A$ has $n_A$ filtrations, $B$ has $n_B$, and $A\tens B$ has $n_An_B$.

This first objection is easily bypassed.
Indeed, let $A$ be equipped with the $n_A$ filtrations $A\supset H_i$, and $B$ with the $n_B$ filtrations $B\supset K_j$.
Then we can repeat $n_B$ times each $H_i$, and repeat $n_A$ times each $K_j$.
Doing so, $A$ and $B$ get the same number $n_An_B$ of filtrations, and their tensor product as multi-filtered spaces is then equipped with the $n_An_B$ filtrations
\beqv
A\tens B \;\supset\; (H_i\tens B)+(A\tens K_j) \;\supset\; H_i\tens K_j
\eeqv
of length $2$.

However, the multi-filtered space associated with the tensor product code only corresponds to the \emph{truncated} filtrations
\beqv
A\tens B \;\supset\; (H_i\tens B)+(A\tens K_j)
\eeqv
of length~$1$.
In matrix form, codewords in $(H_i\tens B)+(A\tens K_j)$ are those vanishing at position $(i,j)$, as expected, while codewords in $H_i\tens K_j$ are those whose whole $i$-th row and $j$-th column are identically zero.

Thus our result and the one in \cite{FW1994}\cite{Totaro1994} are of a quite different nature. It is unclear whether one could be deduced from the other.
Still it would be interesting to try to adapt the method of proof of our Theorem~\ref{produit_sst}, which holds in arbitrary characteristic, to tensor products of multi-filtered spaces.
\end{remark}

\newpage
\renewcommand\refname{References (main)}

%%%%%%%%%%%%%%%%%%%%%%%%%%%%%%%%%%%%%%%%%%%%%%%%%%%%%%%%%%%%%%%%%%%%%%%%%%%%%
%%%%%%%% Appendix: Riemann-Roch theory for linear codes and matroids %%%%%%%%
%%%%%%%%%%%%%%%%%%%%%%%%%%%%%%%%%%%%%%%%%%%%%%%%%%%%%%%%%%%%%%%%%%%%%%%%%%%%%

\newpage
\section*{Appendix: A Riemann-Roch theory for linear codes and matroids}
\addcontentsline{toc}{section}{Appendix: A Riemann-Roch theory for linear codes and matroids}

\vspace{.5\baselineskip}
\mysubsection{Subspace pair cohomology}

We introduce cohomology groups associated to triples $(V,A_1,A_2)$ where:
\begin{itemize}
\item $V$ is a vector space,
\item $A_1,A_2\subset V$ are two linear subspaces. 
\end{itemize}
%Actually, it is not too difficult to devise generalizations of this theory (allowing to take care of an arbitrary number of subspaces, and/or to work in a more general abelian category),
%but we will stick to the simplest setting, more than sufficient for our reinterpretation of Theorem~\ref{thWei} and Proposition~\ref{dualDLP}.

\begin{definition}
\label{defHtriple}
Given such a triple $(V,A_1,A_2)$ we set
\beq
\label{H0triple}
H^0(V;A_1,A_2)=A_1\cap A_2
\eeq
and
\beq
\label{H1triple}
H^1(V;A_1,A_2)=V/(A_1+A_2),
\eeq
of dimension, respectively:
\beqv
h^0(V;A_1,A_2)=\dim(H^0(V;A_1,A_2)),\quad h^1(V;A_1,A_2)=\dim(H^1(V;A_1,A_2)).
\eeqv
\end{definition}

It turns out these $H^0$ and $H^1$ can be identified with the cohomology groups of any of the following two quasi-isomorphic complexes of length $2$:
\beq
\label{complexsub}
0\longto A_1\oplus A_2\longto V\longto 0,
\eeq
or
\beq
\label{complexquot}
0\longto V\longto V/A_1\oplus V/A_2\longto 0.
\eeq

\begin{remark}
\label{signes}
To be more precise, the exact definition of these complexes, together with a quasi-isomorphism between them, and the identification of \eqref{H0triple} and \eqref{H1triple} with their cohomology groups, all depend on some choice of signs.
A possible choice is the following:
\beqv
\label{qis}
%\begin{tikzcd}
%0 \arrow{r}{} &[-1.2em] A_1\!\cap\! A_2 \arrow[equal]{d}{} \arrow{r}{++\;} &[.8em] A_1\!\oplus\! A_2 \arrow{d}{i_1} \arrow{r}{+-\;} \arrow[start anchor=south,end anchor=south,draw=none]{dr}{\!\!\!\!\!\!\!\!\circled{-1}} & \quad E\quad \arrow{d}{p_2} \arrow{r}{} & E\!/\!(\!A_1\!\!+\!\!A_2\!) \arrow[equal]{d}{} \arrow{r}{} &[-1.2em] 0\\
%0 \arrow{r}{} &[-1.2em] A_1\!\cap\! A_2 \arrow{r}{} &[.8em] \;E\; \arrow{r}{+-\;} & E\!/\!A_1\!\oplus\! E\!/\!A_2 \arrow{r}{++\;} & E\!/\!(\!A_1\!\!+\!\!A_2\!) \arrow{r}{} &[-1.2em] 0
%\end{tikzcd}
\begin{tikzcd}
0 \arrow{r}{} &[-1.2em] A_1\!\cap\! A_2 \arrow[equal]{d}{} \arrow{r}{++\;} &[.8em] A_1\!\oplus\! A_2 \arrow{d}{i_1} \arrow{r}{+-\;} & \quad V\quad \arrow{d}{p_2} \arrow{r}{} & V\!/\!(\!A_1\!\!+\!\!A_2\!) \arrow[equal]{d}{} \arrow{r}{} &[-1.2em] 0\\
0 \arrow{r}{} &[-1.2em] A_1\!\cap\! A_2 \arrow{r}{} &[.8em] \;V\; \arrow{r}{-+\;} & V\!/\!A_1\!\oplus\! V\!/\!A_2 \arrow{r}{++\;} & V\!/\!(\!A_1\!\!+\!\!A_2\!) \arrow{r}{} &[-1.2em] 0
\end{tikzcd}
\eeqv
where the maps are defined as follows:
on the top line, the left $++$ map sends $x\in A_1\cap A_2$ to $(x,x)\in A_1\oplus A_2$, the middle $+-$ map sends $(a_1,a_2)\in A_1\oplus A_2$ to $a_1-a_2\in V$, and the right map is natural projection;
on the bottom line, the left map is natural inclusion, the middle $-+$ maps sends $v\in V$ to (the class of) $(-v,v)\in V/A_1\oplus V/A_2$,
and the right $++$ map sends (the class of) $(v_1,v_2)\in V/A_1\oplus V/A_2$ to (the class of) $v_1+v_2\in V/(A_1+A_2)$;
then the vertical $i_1$ sends $(a_1,a_2)\in A_1\oplus A_2$ to $a_1\in V$, and the vertical $p_2$ sends $v\in V$ to (the class of) $(0,v)\in V/A_1\oplus V/A_2$.\footnote{It is easily checked this makes everything commute, for instance,
the middle square maps $(a_1,a_2)\in A_1\oplus A_2$ to (the class of) $(0,a_1)\in V/A_1\oplus V/A_2$.}
\end{remark}

\begin{lemma}
\label{preRR}
Let $(V,A_1,A_2)$ be a triple as above. Then, if $V$ is finite dimensional, we have
\beqv
h^0(V;A_1,A_2)-h^1(V;A_1,A_2)=\dim(A_1)+\dim(A_2)-\dim(V).
\eeqv
\end{lemma}
\begin{proof}
Direct from Definition~\ref{defHtriple}, or alternatively, Euler characteristic of the complex \eqref{complexsub} (or of \eqref{complexquot}).
\end{proof}

\begin{lemma}
\label{les}
Let $(V,A_1,A_2)$ be a triple as above. Then for any linear subspace $A'_2\subset A_2$ we have a long exact sequence
\beqv
\begin{split}
0\longto H^0(V;A_1,A'_2)\longto H^0&(V;A_1,A_2)\longto A_2/A'_2\longto\dots\\
&\dots\longto H^1(V;A_1,A'_2)\longto H^1(V;A_1,A_2)\longto 0.
\end{split}
\eeqv
\end{lemma}
\begin{proof}
Direct from Definition~\ref{defHtriple}, or alternatively, snake lemma for
\beqv
\begin{tikzcd}
0 \arrow{r} & A_1\oplus A'_2 \arrow{r} \arrow{d} & A_1\oplus A_2 \arrow{r} \arrow{d} & A_2/A'_2 \arrow{r} \arrow{d} & 0\\
0 \arrow{r} & V \arrow{r} & V \arrow{r} & 0 \arrow{r} & 0
\end{tikzcd}
\eeqv
\end{proof}

Recall that $V^\vee$ denotes the dual vector space of $V$, \ie the space of linear forms on $V$.
Given a linear subspace $A\subset V$, we let $A^\perp\subset V^\vee$ be the space of linear forms vanishing on $A$.
We have natural identifications $A^\perp=(V/A)^\vee$ and $V^\vee/A^\perp=A^\vee$.

\begin{lemma}
\label{preSerre}
Given a triple $(V,A_1,A_2)$ as above, we have a natural identification
\beqv
H^1(V;A_1,A_2)=H^0(V^\vee;A_1^\perp,A_2^\perp)^\vee.
\eeqv
\end{lemma}
\begin{proof}
Direct from Definition~\ref{defHtriple}, or alternatively, from the fact that dualizing exact sequence \eqref{complexsub} for $(V;A_1,A_2)$ gives
exact sequence \eqref{complexquot} for $(V^\vee;A_1^\perp,A_2^\perp)$ (actually, with sign reversed compared to Remark~\ref{signes}, but this does not change its cohomology).
\end{proof}

\vspace{.5\baselineskip}
\mysubsection{Serre duality and Riemann-Roch for codes}

Let $C\subset\F^n$ be a $[n,k]$-code.
We will apply the formalism just introduced to the following triple:
\begin{itemize}
\item $V=\F^n\;$ somehow seen as an ``ad\`ele space''\footnote{Although we will not really need it, we observe that our ad\`ele space is in fact an ad\`ele \emph{ring},
and even more precisely a $\F$-algebra, thanks to componentwise multiplication. 
The standard scalar product, for which $C^\perp$ is the orthogonal of $C$, is then nothing but the canonically associated trace bilinear form \cite[\S2.37]{Randriambololona2015}.}
\item $A_1=C\;$ the subspace of principal ad\`eles
\item $A_2=\F^J\;$ the subspace of ad\`eles with poles in a subset $J\subset[n]$
(where we actually identify $\F^J$ with $\F^J\times\{0\}^{[n]\moins J}$).
\end{itemize}

This will allow us to mimic Weil's adelic proof of the Riemann-Roch theorem, as modern coding theorists often learn, for instance, in \cite{Stichtenoth}.

Slightly changing notations, we thus get the following cohomology groups:
\beq
\begin{split}
H^0(C,J)&=H^0(\F^n;C,\F^J)\\
&=C\cap\F^J\\
&=C_J,
\end{split}
\eeq
the largest (a.k.a. ``shortened'') subcode of $C$ with support in $J$, and
\beq
\label{H1}
\begin{split}
H^1(C,J)&=H^1(\F^n;C,\F^J)\\
&=\F^n/(C+\F^J)\\
&=\F^{[n]\moins J}/\pi_{[n]\moins J}(C),
\end{split}
\eeq
where $\pi_{[n]\moins J}:\F^n\to\F^{[n]\moins J}$ is the natural projection.
The author does not know whether this $H^1$ has ever been considered in the literature.
Its elements can be seen as syndromes, so maybe it could be of use, for instance, in some decoding algorithms.

Lemma~\ref{les} provides, for any disjoint $J,J'\subset[n]$, a long exact sequence
\beq
\label{les2}
0\to H^0(C,J)\to H^0(C,J\sqcup J')\overset{\pi_{J'}}{\longto}\,\F^{J'}\to H^1(C,J)\to H^1(C,J\sqcup J')\to0
\eeq
where we used the $\sqcup$ symbol as a reminder of the fact that the union is disjoint.
Of course this readily extends to possibly non-disjoint $J,J'$ using the equality $J\cup J'=J\sqcup(J'\moins J)$.

As before we also set $h^0(C,J)=\dim H^0(C,J)$ and $h^1(C,J)=\dim H^1(C,J)$.

\begin{theorem}
\label{Serre}
Let $C\subset\F^n$ be a $[n,k]$-code.
Then for any subset $J\subset[n]$ we have a canonical identification
\beqv
H^1(C,J)=H^0(C^\perp,[n]\moins J)^\vee.
\eeqv
\end{theorem}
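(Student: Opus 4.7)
The plan is to deduce this directly from Lemma~\ref{preSerre} applied to the specific triple $(E, A_1, A_2) = (\F^n, C, \F^J)$, after using the standard scalar product on $\F^n$ to identify $(\F^n)^\vee$ with $\F^n$ itself. Lemma~\ref{preSerre} gives a natural identification
\beqv
H^1(\F^n; C, \F^J) = H^0\bigl((\F^n)^\vee; C^\perp, (\F^J)^\perp\bigr)^\vee,
\eeqv
so the only thing to check is that, under the self-duality $(\F^n)^\vee \simeq \F^n$ induced by the standard bilinear form, the annihilators $C^\perp$ and $(\F^J)^\perp$ become, respectively, the dual code $C^\perp \subset \F^n$ (in the coding-theoretic sense) and the subspace $\F^{[n]\setminus J} \subset \F^n$.

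First I would recall that under the standard pairing $\langle \mathbf{x}, \mathbf{y}\rangle = \sum_i x_i y_i$ on $\F^n$, the notation $C^\perp$ coincides in the two senses (linear-algebra annihilator versus coding-theoretic dual): this is tautological from the definition of the dual code. Second I would observe that a vector $\mathbf{y} \in \F^n$ annihilates $\F^J$ under this pairing if and only if $y_j = 0$ for every $j \in J$, i.e.\ $\mathbf{y} \in \F^{[n]\setminus J}$; hence $(\F^J)^\perp = \F^{[n]\setminus J}$ under our identification.

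Plugging these two identifications into the conclusion of Lemma~\ref{preSerre} gives
\beqv
H^1(C, J) = H^1(\F^n; C, \F^J) = H^0(\F^n; C^\perp, \F^{[n]\setminus J})^\vee = H^0(C^\perp, [n]\setminus J)^\vee,
\eeqv
which is exactly the asserted duality. There is no real obstacle here: the content is entirely in Lemma~\ref{preSerre}, and the only verification needed is the compatibility of the abstract annihilator $(-)^\perp$ with the concrete coding-theoretic $(-)^\perp$, which follows immediately from the chosen self-dual pairing. Naturality of the isomorphism follows from the naturality of the isomorphism in Lemma~\ref{preSerre}, so nothing further needs to be said.
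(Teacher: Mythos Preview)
Your proof is correct and follows exactly the paper's approach: the paper's entire proof is the sentence ``This is Lemma~\ref{preSerre},'' and you have simply unpacked the identifications (under the standard bilinear form on $\F^n$) that make that invocation work.
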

\begin{proof}
This is Lemma~\ref{preSerre}.
\end{proof}

\begin{theorem}
\label{RR}
Let $C\subset\F^n$ be a $[n,k]$-code.
Then for any subset $J\subset[n]$ we have the equality
\beqv
h^0(C,J)-h^0(C^\perp,[n]\moins J)\,=\,\#J\,+k-n.
\eeqv
\end{theorem}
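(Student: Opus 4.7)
The plan is to combine the two results that immediately precede the theorem, namely the Euler characteristic computation of Lemma~\ref{preRR} and the Serre duality identification of Theorem~\ref{Serre}, specialized to the adelic triple $(\F^n, C, \F^J)$.

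First, I would apply Lemma~\ref{preRR} to the triple $(E, A_1, A_2) = (\F^n, C, \F^J)$, whose dimensions are $n$, $k$, and $\#J$ respectively. This yields
\beqv
h^0(C,J) - h^1(C,J) = k + \#J - n,
\eeqv
which is really the content of the Riemann-Roch formula modulo identifying $h^1$ with the dimension of some $H^0$ space on the dual side.

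Next, I would invoke Theorem~\ref{Serre}, which gives the canonical identification $H^1(C,J) = H^0(C^\perp,[n]\moins J)^\vee$; taking dimensions, $h^1(C,J) = h^0(C^\perp, [n]\moins J)$. Substituting this into the preceding equality yields the desired identity. There is essentially no obstacle here: the work was done in setting up the subspace pair cohomology formalism (the analogues of $H^0$ and $H^1$, their Euler characteristic, and the duality between $H^1$ and the dual $H^0$), so that Riemann-Roch for codes becomes a one-line consequence, perfectly parallel to the classical derivation of Riemann-Roch from Serre duality plus the adelic Euler characteristic formula.
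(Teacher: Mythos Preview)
Your proposal is correct and is exactly the approach taken in the paper: combine Lemma~\ref{preRR} applied to the triple $(\F^n,C,\F^J)$ with Theorem~\ref{Serre} to replace $h^1(C,J)$ by $h^0(C^\perp,[n]\moins J)$.
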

\begin{proof}
Combine Lemma~\ref{preRR} with Theorem~\ref{Serre}.
\end{proof}

Since $h^0(C,J)$ vanishes for $\#J<d=\dmin(C)$, it is convenient to introduce the normalized cardinality $\abs{J}_{\textrm{norm}}=\#J-d$.
Theorem~\ref{RR} then becomes:
\beq
\label{RRnorm}
h^0(C,J)-h^0(C^\perp,[n]\moins J)\,=\,\abs{J}_{\textrm{norm}}+1-g
\eeq
where $g=n-k-d+1$ is the Singleton defect of $C$.

\vspace{.5\baselineskip}

We find it pleasant to think of Theorem~\ref{Serre} as a Serre duality theorem for codes, and of Theorem~\ref{RR}, or~\eqref{RRnorm}, as a Riemann-Roch theorem.
Seemingly, when $C$ is an AG code defined by evaluation of functions on an algebraic curve,
these results partially reflect Serre duality and Riemann-Roch on the underlying curve.
However we would like to stress that they hold for \emph{any} arbitrary linear code.

Of course a more direct proof could be given using only elementary linear algebra, for instance by combining lemmas from \cite{Forney1994}.
However our main purpose here is not really in the content of these theorems, nor in giving the most efficient proof, but rather on the analogy with the geometric situation. Thus our ``adelic'' proof was presented in order to make this analogy even stronger.

Observe that our statements do not involve only the code $C$ but also its dual $C^\perp$, and hence become more symmetric when applied to \emph{self-dual} codes.
Although it is not entirely clear what status self-dual codes enjoy under our chain of analogies (\eg ``$2$-torsion points'' in some ``Jacobian''?), they certainly have a great arithmetic significance.
For instance, Construction~A of \cite[\S5.2]{CS} (suitably normalized) transforms binary self-dual codes into unimodular integral euclidean lattices.
We refer to \cite{Ebeling} for more on this topic.
In a more anecdotal way, we observe that self-dual codes also satisfy the following \emph{Clifford}-like estimate:
\beq
h^0(C,J)\leq\frac{1}{2}\#J.
\eeq

It is interesting to note that, even if not explicitely stated so, the analogy of Theorems~\ref{Serre} and~\ref{RR} with Serre duality and Riemann-Roch was already implicit in previous works of other authors.
We point out especially \cite{Duursma2003}: as observed below, the $h^0$ and $h^1$ introduced there, p.~123, coincide with ours.
A consequence of this observation (and, in the first place, of the results in \cite{Duursma2003}) is that this Riemann-Roch theorem for codes implies the functional equation for the Duursma zeta function
essentially in the same way that the usual Riemann-Roch theorem implies the functional equation for the zeta function of curves over finite fields.

\vspace{.5\baselineskip}
\mysubsection{Generalization to matroids}

It turns out our Theorem~\ref{RR} admits a generalization, and also a more direct proof, in the context of matroids.

We keep notations as in section~\ref{ssd}
of the main text: a finite matroid $\cM=(E,\cI)$ is a $[n,k]$-matroid if it has cardinality $\#E=n$ and rank $r(E)=k$;
its dual matroid $\cM^*$ then is a $[n,n-k]$-matroid.

Now, following \cite{Duursma2003}, for any $J\subset E$, we set
\beq
\label{h0matroid}
h^0(\cM,J)=r(E)-r(E\moins J).
\eeq
Our Riemann-Roch theorem for matroids then reads:
\begin{proposition}
\label{RRmatroid}
Let $\cM=(E,\cI)$ be a $[n,k]$-matroid.
Then for any $J\subset E$ we have
\beqv
h^0(\cM,J)-h^0(\cM^*,E\moins J)\,=\,\#J\,+k-n.
\eeqv
\end{proposition}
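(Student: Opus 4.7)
The plan is to reduce the statement to the classical formula relating the rank function of a matroid to that of its dual, then invoke this formula directly from the standard theory of matroids \cite[Prop.~2.1.9]{Oxley}.

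First, I would unfold the definitions. Since $r(E)=k$, we have
\beqv
h^0(\cM,J)=k-r(E\moins J),
\eeqv
while from $r^*(E)=n-k$ and $E\moins(E\moins J)=J$ we get
\beqv
h^0(\cM^*,E\moins J)=(n-k)-r^*(J).
\eeqv
Substituting these two expressions, the identity we want to prove becomes
\beqv
(k-r(E\moins J))-((n-k)-r^*(J))=\#J+k-n,
\eeqv
which after cancellation simplifies to
\beqv
r^*(J)=\#J+r(E\moins J)-k=\#J+r(E\moins J)-r(E).
\eeqv

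Second, I would recognize this as precisely the classical dual rank formula for matroids, which is one of the standard equivalent definitions of $\cM^*$. Since this formula holds unconditionally for any matroid $\cM$ and any subset $J\subset E$, the proposition follows immediately by just citing \cite[Prop.~2.1.9]{Oxley} (or deriving it in one line from the fact that the bases of $\cM^*$ are the complements of those of $\cM$: extend a maximal independent subset of $E\moins J$ in $\cM$ to a basis of $\cM$, and observe that its complement is a basis of $\cM^*$ whose intersection with $J$ is a maximal independent subset of $J$ in $\cM^*$, yielding $r^*(J)+r(E\moins J)=\#J+r(E)-r(E)\cdot 0$-style bookkeeping, i.e.\ the claimed identity after counting elements carefully).

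There is no real obstacle here: the whole content of the proposition \emph{is} the dual rank formula, re-expressed through the $h^0$ notation of \eqref{h0matroid}. The only care needed is to keep the bookkeeping straight ($r$ vs.\ $r^*$, $J$ vs.\ $E\moins J$, $k$ vs.\ $n-k$); once the substitutions are made correctly, the statement is tautological. The interest of the proposition lies not in its proof but in the fact that, when $\cM$ is the matroid associated to a linear code $C$ via a generating matrix, \eqref{k-r(J)} identifies $h^0(\cM,J)$ with $h^0(C,J)=\dim(C_J)$ (and similarly for $\cM^*$ and $C^\perp$), so that Proposition~\ref{RRmatroid} recovers Theorem~\ref{RR} and simultaneously extends it to arbitrary matroids.
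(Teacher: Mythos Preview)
Your proof is correct and follows exactly the same route as the paper: substitute the definitions of $h^0(\cM,J)$ and $h^0(\cM^*,E\moins J)$, simplify, and recognize the resulting identity $r^*(J)=\#J+r(E\moins J)-r(E)$ as \cite[Prop.~2.1.9]{Oxley}. (Your parenthetical one-line derivation of the dual rank formula is garbled---the displayed expression ``$r^*(J)+r(E\moins J)=\#J+r(E)-r(E)\cdot 0$'' is not what you mean---but since you are only offering it as an optional alternative to the citation, this does not affect the validity of the main argument.)
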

\begin{proof}
Replacing with~\eqref{h0matroid}, $r(E)=k$, and $r^*(E)=n-k$, we are reduced to show $r^*(J)-r(E\moins J)=\#J-k$, which is nothing but \cite[Prop.~2.1.9]{Oxley}.
\end{proof}

Observe that if $\cM$ is the matroid with underlying set $[n]$, defined by the columns of the generating matrix of a $[n,k]$-code $C$, then for any $J\subset[n]$ we have
\beqv
h^0(\cM,J)=h^0(C,J).
\eeqv
Also the dual matroid $\cM^*$ corresponds in the same way to the dual code $C^\perp$.
Thus in this situation, Proposition~\ref{RRmatroid} reduces to Theorem~\ref{RR}.

In \cite{Duursma2003} one also finds the quantity
\beq
\label{h1matroid}
h^1(\cM,J)=\#(E\moins J)-r(E\moins J).
\eeq
From Proposition~\ref{RRmatroid} (or from \cite[Prop.~2.1.9]{Oxley}) we immediately see that it satisfies
\beqv
h^1(\cM,J)=h^0(\cM^*,E\moins J),
\eeqv
a weak form of Serre duality (equality of ``dimensions'', but no actual duality map as in Theorem~\ref{Serre}).

In case $\cM$ comes from a linear code $C$, we also have $h^1(\cM,J)=h^1(C,J)$ (direct consequence of~\eqref{H1}). So the two definitions of $h^1$ coincide, as did the $h^0$.

\vspace{.5\baselineskip}
\mysubsection{Wei's duality for higher weights}

We define the weight hierarchy and the dimension/length profile (DLP) of a matroid as the two ``dual'' sequences
\beqv
\label{dimatroid}
d_i(\cM)=\min\{\#J:\;J\subset E,\,h^0(\cM,J)=i\}\qquad(i=0\dots k)
\eeqv
and
\beqv
\label{DLPmatroid}
k_j(\cM)=\max\{h^0(\cM,J):\;J\subset E,\,\#J=j\}\qquad(j=0\dots n).
\eeqv
When $\cM$ comes from a code $C$, these are easily seen to coincide with those introduced in \cite{Wei1991} and \cite{Forney1994}.

Pursuing our geometric analogy, the weight hierarchy clearly corresponds to the gonality sequence of a curve $X$
\beqv
\gamma_i(X)=\min\{\deg(D):\;D\in\Div(X),\,h^0(X,D)=i\}\qquad(i>0),
\eeqv
an observation already made in \cite{Munuera1994}\cite{YKS1994}\cite{Duursma2003}.
Then in \cite{Duursma2003} there is also a derivation of Wei's duality theorem using the rank polynomial of the code.
Here we will proceed in a way closer to that of \cite{Forney1994}.
Observe that all our computations will be in the matroid setting, so what we will get in the end is a matroid version of Wei's duality. Actually, such a result is not new: it first appeared in \cite{BJMS2012}.

\vspace{.5\baselineskip}

As in \cite{Forney1994}, we first prove duality for the DLP.
Interestingly, this result is also at the heart of our Theorem~\ref{thdualslopes} on duality for slopes and canonical filtration.

\begin{proposition}
\label{dualDLPmatroid}
Let $\cM=(E,\cI)$ be a $[n,k]$-matroid. Then for any $j\in[n]$ we have
\beqv
k_{n-j}(\cM^*)=k_j(\cM)+n-j-k.
\eeqv
Moreover, if $k_j(\cM)=h^0(\cM,J)$ for some $J\subset E$ with $\#J=j$, then $k_{n-j}(\cM^*)=h^0(\cM^*,E\moins J)$.
\end{proposition}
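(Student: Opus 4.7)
The plan is to deduce this directly from the Riemann-Roch theorem for matroids (Proposition~\ref{RRmatroid}) and the fact that complementation $J \mapsto E \setminus J$ is a bijection between subsets of $E$ of cardinality $j$ and subsets of cardinality $n-j$.

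First I would rewrite the Riemann-Roch identity as
\beqv
h^0(\cM^*, E \setminus J) = h^0(\cM, J) + (n - \#J - k),
\eeqv
so that for $J$ of fixed cardinality $\#J = j$, the quantity $h^0(\cM^*, E \setminus J)$ differs from $h^0(\cM, J)$ by the constant $n - j - k$ that depends only on $j$ (and the fixed parameters $n,k$).

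Then, taking the maximum of both sides as $J$ ranges over subsets of $E$ with $\#J = j$, the right-hand side gives $k_j(\cM) + n - j - k$, while on the left-hand side $E \setminus J$ ranges over all subsets of $E$ of cardinality $n - j$, so the max is $k_{n-j}(\cM^*)$. This establishes the equality $k_{n-j}(\cM^*) = k_j(\cM) + n - j - k$. The ``moreover'' statement is an immediate consequence of this argument, since the additive constant guarantees that $J$ realizes the maximum defining $k_j(\cM)$ if and only if $E \setminus J$ realizes the maximum defining $k_{n-j}(\cM^*)$.

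There is no real obstacle: the substantive content has been absorbed into Proposition~\ref{RRmatroid}, whose proof reduces in turn to the standard matroid duality formula $r^*(J) = \#J - k + r(E \setminus J)$ \cite[Prop.~2.1.9]{Oxley}. The only minor care needed is to check that the $J$-vs-$E\setminus J$ bookkeeping in the definitions of $k_j$ and $h^0$ lines up correctly with the complementation convention used in the definition of $\cM^*$.
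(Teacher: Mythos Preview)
Your proposal is correct and follows essentially the same argument as the paper: rewrite Proposition~\ref{RRmatroid} as $h^0(\cM^*,E\setminus J)=h^0(\cM,J)+n-j-k$ for $\#J=j$, then take the maximum over all such $J$, using that $J\mapsto E\setminus J$ is a bijection onto subsets of size $n-j$. The ``moreover'' clause is handled identically, via the observation that the two sides differ by a constant and hence attain their maxima at the same $J$.
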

\begin{proof}
For $J\subset E$ with $\#J=j$ we write Proposition~\ref{RRmatroid} as
\beq
h^0(\cM^*,E\moins J)=h^0(\cM,J)+n-j-k.
\eeq
Letting $J$ vary with $\#J=j$ fixed, we then find that the minimum is attained in the left-hand and right-hand side for the same $J$, with value $k_{n-j}(\cM^*)=k_j(\cM)+n-j-k$ as claimed.
\end{proof}

From the very definition, for any $J\subset E$ and $e\in E\moins J$, we have $r(J\cup\{e\})=r(J)$ or $r(J)+1$.
This translates as
\beq
h^0(\cM,J\cup\{e\})=h^0(\cM,J)\text{ or }h^0(\cM,J)+1
\eeq
(in case $\cM$ comes from a code $C$, this also follows from~\eqref{les2} with $J'=\{e\}$).

Plugging this into the definition of the DLP, it follows easily:
\begin{lemma}
For any integer $j\in[n]$ we have $k_j(\cM)=k_{j-\!1}(\cM)$ or $k_{j-\!1}(\cM)+1$.
\end{lemma}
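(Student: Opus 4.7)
The plan is to establish the double inequality $k_{j-1}(\cM)\leq k_j(\cM)\leq k_{j-1}(\cM)+1$ by applying the preceding observation (that adjoining a single element to a set $J$ makes $h^0$ grow by $0$ or $1$) in both directions: once to build up from size $j-1$ to size $j$ for the lower bound, and once to strip down from size $j$ to size $j-1$ for the upper bound.

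First I would prove the lower bound $k_j(\cM)\geq k_{j-1}(\cM)$. Pick $J'\subset E$ with $\#J'=j-1$ attaining the maximum in the definition of $k_{j-1}(\cM)$, so that $h^0(\cM,J')=k_{j-1}(\cM)$. Since $j\leq n$, the complement $E\setminus J'$ is nonempty, so I can choose any $e\in E\setminus J'$ and form $J=J'\cup\{e\}$, a subset of cardinality $j$. By the displayed observation, $h^0(\cM,J)\geq h^0(\cM,J')=k_{j-1}(\cM)$, and since $k_j(\cM)$ is the maximum of $h^0(\cM,\cdot)$ over such $J$, we conclude $k_j(\cM)\geq k_{j-1}(\cM)$.

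Next I would prove the upper bound $k_j(\cM)\leq k_{j-1}(\cM)+1$. Pick $J\subset E$ with $\#J=j$ attaining the maximum in the definition of $k_j(\cM)$, so that $h^0(\cM,J)=k_j(\cM)$. Since $j\geq 1$, the set $J$ is nonempty, so I can choose any $e\in J$ and form $J'=J\setminus\{e\}$, which has cardinality $j-1$. Writing $J=J'\cup\{e\}$ and applying the displayed observation in reverse, we have $h^0(\cM,J)\leq h^0(\cM,J')+1\leq k_{j-1}(\cM)+1$, hence $k_j(\cM)\leq k_{j-1}(\cM)+1$.

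Combining the two bounds gives $k_{j-1}(\cM)\leq k_j(\cM)\leq k_{j-1}(\cM)+1$, so the integer $k_j(\cM)$ must equal either $k_{j-1}(\cM)$ or $k_{j-1}(\cM)+1$, as claimed. There is no real obstacle here; the only slightly delicate point is checking the boundary cases (nonemptiness of $E\setminus J'$ when $j=n$, and of $J$ when $j=1$), both of which are immediate from the hypothesis $j\in[n]$.
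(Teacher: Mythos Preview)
Your argument is correct and is exactly the kind of routine verification the paper has in mind when it writes ``From this it follows easily'': both bounds are obtained by adjoining or removing a single element and invoking the displayed observation that $h^0(\cM,J\cup\{e\})\in\{h^0(\cM,J),\,h^0(\cM,J)+1\}$. There is nothing to add.
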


\begin{definition}
Let $\cM$ be a $[n,k]$-matroid, and $j\in[n]$ an integer.
\begin{itemize}
\item We say that $j$ is a \emph{gap} for $\cM\,$ if $k_j(\cM)=k_{j-1}(\cM)$.
\item Else, we say $j$ is a \emph{non-gap} if $k_j(\cM)=k_{j-1}(\cM)+1$.
\end{itemize}
\end{definition}

Since $k_0(\cM)=0$ and $k_n(\cM)=k$, we deduce that $\cM$ admits precisely $n-k$ gaps and $k$ non-gaps.

\begin{lemma}
\label{gapnongap}
An integer $j\in[n]$ is a non-gap for $\cM$ if and only if $n+1-j$ is a gap for $\cM^*$ (and conversely).
\end{lemma}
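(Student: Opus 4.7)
The plan is to derive this directly from Proposition~\ref{dualDLPmatroid}, which gives the explicit duality relation
\beqv
k_{n-i}(\cM^*) = k_i(\cM) + n - i - k
\eeqv
for all $i \in \{0,1,\dots,n\}$. The idea is that, because both sequences $i \mapsto k_i(\cM)$ and $i \mapsto k_i(\cM^*)$ increment by either $0$ or $1$ at each step (by the preceding lemma), telling a gap from a non-gap amounts to computing a single difference of consecutive values.

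Concretely, I would fix $j \in [n]$ and apply Proposition~\ref{dualDLPmatroid} twice: once at $i = j$, and once at $i = j-1$. Subtracting the two resulting identities gives
\beqv
k_{n-j+1}(\cM^*) - k_{n-j}(\cM^*) \;=\; \bigl(k_{j-1}(\cM) - k_j(\cM)\bigr) + 1.
\eeqv
Both sides lie in $\{0,1\}$ thanks to the lemma that precedes the statement. Therefore the left-hand side equals $0$ (\ie $n+1-j$ is a gap for $\cM^*$) if and only if $k_j(\cM) - k_{j-1}(\cM) = 1$ (\ie $j$ is a non-gap for $\cM$). This gives both directions of the equivalence simultaneously.

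There is essentially no obstacle here: the content of the statement is entirely packaged into the duality identity of Proposition~\ref{dualDLPmatroid}, and the argument is a one-line arithmetic manipulation combined with the observation that consecutive values of $k_\bullet$ differ by $0$ or $1$. The converse stated parenthetically in the lemma (``and conversely'') is automatic, since applying the same argument to the matroid $\cM^*$ (using $(\cM^*)^* = \cM$) exchanges the roles of gaps and non-gaps in a symmetric way.
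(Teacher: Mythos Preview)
Your proof is correct and follows essentially the same approach as the paper: both apply Proposition~\ref{dualDLPmatroid} at $i=j$ and $i=j-1$ and compare, yielding the equivalence immediately. Your version just spells out the subtraction explicitly, while the paper compresses it into a single sentence.
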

\begin{proof}
Replacing in Proposition~\ref{dualDLPmatroid}, we see $k_j(\cM)=k_{j-1}(\cM)+1$ if and only if $k_{n-j}(\cM^*)=k_{n+1-j}(\cM^*)$.
\end{proof}

\begin{lemma}
\label{nongaps=GHW}
The non-gaps of $\cM$ coincide with its (nonzero) higher weigths $d_1(\cM),\dots,d_k(\cM)$.
\end{lemma}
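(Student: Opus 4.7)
The plan is to exploit the fact that $j \mapsto k_j(\cM)$ is monotone non-decreasing with unit (or zero) increments, so that its jumps are in bijection with $[k]$, and to identify those jumps with the higher weights.

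First I would record the monotonicity: for any $J'$ of size $j-1$, completing to $J = J' \cup \{e\}$ gives $h^0(\cM,J) \geq h^0(\cM,J')$, hence $k_{j-1}(\cM) \leq k_j(\cM)$. Together with the previous Lemma, the sequence $k_0(\cM)=0,\,k_1(\cM),\,\ldots,\,k_n(\cM)=k$ increases by $0$ (gaps) or $1$ (non-gaps), so each value $i \in \{1,\ldots,k\}$ is attained for the first time at a unique non-gap $j_i$, and $\{j_1,\ldots,j_k\}$ is precisely the set of non-gaps. The claim then reduces to showing $d_i(\cM) = j_i$, i.e.\ $d_i(\cM) = \min\{j;\,k_j(\cM) \geq i\}$.

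The key auxiliary observation is a discrete \emph{intermediate value} property for $h^0(\cM,\cdot)$: since $h^0(\cM,\emptyset)=0$ and $h^0$ increases by at most $1$ when a single element is added (this is the statement immediately preceding the definition of gap/non-gap), any $J''$ with $h^0(\cM,J'') \geq i$ contains a subset $J' \subset J''$ with $h^0(\cM,J') = i$ and $\#J' \leq \#J''$.

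With this in hand, both inclusions are immediate. If $j = d_i(\cM)$, there exists $J$ with $\#J=j$ and $h^0(\cM,J)=i$, so $k_j(\cM) \geq i$; conversely, if some $j' < j$ satisfied $k_{j'}(\cM) \geq i$, the intermediate value observation would produce a set of size $\leq j'<j$ achieving $h^0 = i$, contradicting the minimality in the definition of $d_i(\cM)$. Hence $k_{j-1}(\cM) < i \leq k_j(\cM)$, forcing $j$ to be a non-gap and $j = j_i$. Conversely, if $j$ is a non-gap with $k_j(\cM) = i$ and $k_{j-1}(\cM) = i-1$, monotonicity gives $k_{j'}(\cM) \leq i-1$ for every $j' < j$, whereas $k_j(\cM) = i$ exhibits a set of size $j$ attaining $h^0 = i$; so $d_i(\cM) = j$.

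I do not expect any real obstacle here: the whole argument is bookkeeping once the intermediate value observation is isolated. The only point requiring any care is making sure to use strict monotonicity of $d_i$ versus the mere ``$\leq 1$'' jumps of $k_j$ in the right direction, which is exactly what the discrete intermediate value lemma supplies.
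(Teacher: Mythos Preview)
Your proof is correct and follows essentially the same line as the paper's. The paper's argument is a terse one-liner asserting that ``$j$ is a non-gap with $k_j(\cM)=l$'' is equivalent to ``$d_l(\cM)=j$''; your version makes explicit the discrete intermediate value property of $h^0$ needed to justify the less obvious direction of that equivalence (namely, ruling out sets of size $<j$ with $h^0>l$), which the paper leaves implicit.
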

\begin{proof}
That $j$ is a non-gap means $l=k_j(\cM)>k_{j-1}(\cM)=l-1$, or equivalently, there is a $J\subset E$ with $h^0(\cM,J)=l$ and $\#J=j$, but $h^0(\cM,J')<l$ for all $J'$ of cardinality $\#J'<j$.
In turn, this means precisely $d_l(\cM)=j$.
\end{proof}

We can now finish our proof of Wei's duality for higher weights of matroids:
\begin{proposition}[{\cite[Th.~1]{BJMS2012}}]
\label{Weimatroid}
Let $\cM$ be a $[n,k]$-matroid. Then the two sets of integers
\beqv
\{d_i(\cM):\;i\in[k]\}\qquad\text{and}\qquad\{n+1-d_i(\cM^*):\;i\in[n-k]\}
\eeqv
are disjoint, hence they form a partition of $[n]$.
\end{proposition}
\begin{proof}
Combine Lemma~\ref{gapnongap} and Lemma~\ref{nongaps=GHW}.
\end{proof}

\renewcommand\refname{Supplementary references (appendix)}

\end{document}